\documentclass[11pt,reqno,sumlimits]{amsart}
\usepackage{amsfonts, amsmath, amscd, amssymb, euscript, amsthm, array, booktabs, color, dcolumn, shortvrb, tabularx, units, url, mathrsfs, enumitem, tikz-cd, nameref}
\PassOptionsToPackage{bookmarks=false}{hyperref}
\PassOptionsToPackage{linktocpage}{hyperref}
\usepackage[pdfstartview=FitH]{hyperref}
\usepackage[all]{xy}
\usepackage{kpfonts}
\setlist[enumerate,1]{label={(\alph*)}}
\normalsize
\makeatletter
\newcommand{\addresseshere}{\enddoc@text\let\enddoc@text\relax}
\makeatother
\DeclareMathOperator{\op}{op}

\DeclareMathOperator{\Aut}{Aut}
\DeclareMathOperator{\LL}{L}
\DeclareMathOperator{\todd}{Todd}
\DeclareMathOperator{\FL}{FL}
\DeclareMathOperator{\ind}{ind}
\DeclareMathOperator{\vol}{vol}
\DeclareMathOperator{\CS}{CS}
\DeclareMathOperator{\odd}{odd}
\DeclareMathOperator{\even}{even}
\DeclareMathOperator{\im}{Im}
\DeclareMathOperator{\id}{id}
\DeclareMathOperator{\ch}{ch}
\DeclareMathOperator{\End}{End}
\DeclareMathOperator{\rk}{rank}
\DeclareMathOperator{\str}{str}
\DeclareMathOperator{\tr}{tr}
\begin{document}
\setlength{\baselineskip}{1.5\baselineskip}
\theoremstyle{definition}
\newtheorem{coro}{Corollary}
\newtheorem{thm}{Theorem}
\newtheorem{defi}{Definition}
\newtheorem{lemma}{Lemma}
\newtheorem{exam}{Example}
\newtheorem{prop}{Proposition}
\newtheorem{remark}{Remark}
\newcommand{\hto}{\hookrightarrow}
\newcommand{\wt}[1]{{\widetilde{#1}}}
\newcommand{\ov}[1]{{\overline{#1}}}
\newcommand{\un}[1]{{\underline{#1}}}
\newcommand{\wh}[1]{{\widehat{#1}}}
\newcommand{\deff}[1]{{\bf\emph{#1}}}
\newcommand{\abs}[1]{\lvert#1\rvert}
\newcommand{\norm}[1]{\lVert#1\rVert}
\newcommand{\inner}[1]{\langle#1\rangle}
\newcommand{\poisson}[1]{\{#1\}}
\newcommand{\biginner}[1]{\Big\langle#1\Big\rangle}
\newcommand{\set}[1]{\{#1\}}
\newcommand{\Bigset}[1]{\Big\{#1\Big\}}
\newcommand{\BBigset}[1]{\bigg\{#1\bigg\}}
\newcommand{\dis}[1]{$\displaystyle#1$}
\newcommand{\R}{\mathbb{R}}
\newcommand{\EE}{\mathbb{E}}
\newcommand{\GG}{\mathbb{G}}
\newcommand{\N}{\mathbb{N}}
\newcommand{\Z}{\mathbb{Z}}
\newcommand{\Q}{\mathbb{Q}}
\newcommand{\E}{\mathcal{E}}
\newcommand{\T}{\mathcal{T}}
\newcommand{\G}{\mathcal{G}}
\newcommand{\F}{\mathcal{F}}
\newcommand{\I}{\mathcal{I}}
\newcommand{\V}{\mathcal{V}}
\newcommand{\W}{\mathcal{W}}
\newcommand{\SSS}{\mathcal{S}}
\newcommand{\h}{\mathbb{H}}
\newcommand{\C}{\mathbb{C}}
\newcommand{\A}{\mathcal{A}}
\newcommand{\LLL}{\mathcal{L}}
\newcommand{\HH}{\mathcal{H}}
\newcommand{\PP}{\mathcal{P}}
\newcommand{\K}{\mathcal{K}}
\newcommand{\RRR}{\mathscr{R}}
\newcommand{\AAA}{\mathscr{A}}
\newcommand{\DDD}{\mathscr{D}}
\newcommand{\e}{\mathscr{E}}
\newcommand{\hh}{\mathscr{H}}
\newcommand{\w}{\mathscr{W}}
\newcommand{\f}{\mathscr{F}}
\newcommand{\z}{\mathcal{Z}}
\newcommand{\g}{\mathscr{G}}
\newcommand{\so}{\mathfrak{so}}
\newcommand{\gl}{\mathfrak{gl}}
\newcommand{\aaa}{\mathbb{A}}
\newcommand{\bbb}{\mathbb{B}}
\newcommand{\ttt}{\mathbb{T}}
\newcommand{\DD}{\mathsf{D}}
\newcommand{\ff}{\mathsf{F}}
\newcommand{\FF}{\mathbb{F}}
\newcommand{\ccc}{\bold{c}}
\newcommand{\sss}{\mathbb{S}}
\newcommand{\cdd}[1]{\[\begin{CD}#1\end{CD}\]}
\numberwithin{equation}{subsection}
\numberwithin{thm}{section}
\numberwithin{lemma}{section}
\numberwithin{coro}{section}
\numberwithin{prop}{section}
\numberwithin{remark}{section}
\numberwithin{defi}{section}
\normalsize
\title[An extended variational formula for the Bismut--Cheeger eta form]{An extended variational formula for the Bismut--Cheeger eta form and its applications}
\author{Man-Ho Ho}
\email{homanho@bu.edu}
\address{Hong Kong}
\subjclass[2020]{Primary 19K56, 19L50, 19L10}
\keywords{Riemann--Roch--Grothendieck theorem, analytic index, Bismut--Cheeger eta form, differential $K$-theory}
\maketitle
\nocite{*}
\begin{abstract}
The purpose of this paper is to extend our previous work on the variational formula for the Bismut--Cheeger eta form without the kernel bundle assumption by allowing the spin$^c$ Dirac operators to be twisted by isomorphic vector bundles and to establish the $\Z_2$-graded additivity of the Bismut--Cheeger eta form. Using these results, we give alternative proofs of the fact that the analytic index in differential $K$-theory is a well defined group homomorphism and the Riemann--Roch--Grothendieck theorem in $\R/\Z$ $K$-theory.
\end{abstract}
\tableofcontents

\section{Introduction}

The Bismut--Cheeger eta form serves as a transgression form between the Chern character of the index bundle and its Atiyah--Singer representative at the differential form level \cite[Theorems 4.35 and 4.95]{BC89} and \cite[Theorem 0.1]{D91}. In this paper, we extend our previous work on the variational formula for the Bismut--Cheeger eta form \cite[Proposition 1]{H20} by allowing the spin$^c$ Dirac operators to be twisted by isomorphic vector bundles. In addition, we prove the $\Z_2$-graded additivity of the Bismut--Cheeger eta form. We then present some applications of these results. All the Dirac operators in this paper are not assumed to satisfy the kernel bundle assumption.

To put the paper into context, let $\pi:X\to B$ be a submersion with closed, oriented and spin$^c$ fibers of even dimension, equipped with a Riemannian and differential spin$^c$ structure $(T^HX, g^{T^VX}, g^\lambda, \nabla^\lambda)$. Let $(E, g^E, \nabla^E)$ be a complex vector bundle over $X$ with a Hermitian metric and a unitary connection. Denote by $\DD^{S\otimes E}$ the corresponding twisted spin$^c$ Dirac operator. Mi\v s\v cenko--Fomenko show in \cite{MF79} that there always exists a $\Z_2$-graded complex vector bundle $L\to B$ such that it represents the analytic index of $\DD^{S\otimes E}$ in $K$-theory, i.e. $\ind^a([E])=[L^+]-[L^-]\in K(B)$. The corresponding local family index theorem (FIT) for $\DD^{S\otimes E}$ by Freed--Lott \cite[(7.26)]{FL10} states that
$$d\wh{\eta}^E(g^E, \nabla^E, T^HX, g^{T^VX}, g^\lambda, \nabla^\lambda, L)=\int_{X/B}\todd(\nabla^{T^VX})\wedge\ch(\nabla^E)-\ch(\nabla^L),$$
where $\nabla^L$ is the projected $\Z_2$-graded unitary connection on $L\to B$ and $\wh{\eta}^E(g^E, \nabla^E, T^HX, g^{T^VX}, g^\lambda, \nabla^\lambda, L)$ is the corresponding Bismut--Cheeger eta form.

The notation $\wh{\eta}^E(g^E, \nabla^E, T^HX, g^{T^VX}, g^\lambda, \nabla^\lambda, L)$ is to indicate the dependence on the geometric structures. Given two sets of geometric structures, denoted by subscripts 0 and 1, variational formula for the Bismut--Cheeger eta form is an explicit expression of the difference
$$\wh{\eta}^E(g^E_1, \nabla^E_1, T^H_1X, g^{T^VX}_1, g^\lambda_1, \nabla^\lambda_1, L_1)-\wh{\eta}^E(g^E_0, \nabla^E_0, T^H_0X, g^{T^VX}_0, g^\lambda_0, \nabla^\lambda_0, L_0)$$
in terms of the geometric structures involved.

In recent years, variational formulas for the Bismut--Cheeger eta form have been established in various settings and have found numerous applications in local index theory. For instance, Liu proves a variational formula for the Bismut--Cheeger eta form in the equivariant setting \cite[Theorem 1.4]{L23} (see also \cite[Theorem 1.2]{L17}) and uses it to prove its functoriality \cite[Theorem 1.6]{L23} (see also \cite[Theorem 1.3]{L17}). On the other hand, we prove a variational formula for the Bismut--Cheeger eta form without the kernel bundle assumption in the even dimensional fiber case \cite[Proposition 1]{H20} and use it to prove the $\Z_2$-graded version of the real part of the Riemann--Roch--Grothendieck (RRG) theorem for complex flat vector bundles in the same case at the differential form level \cite[Theorem 1]{H20}.

The main result of this paper is an extension of \cite[Proposition 1]{H20}, in the sense that the spin$^c$ Dirac operators are allowed to be twisted by isomorphic vector bundles, i.e. an explicit expression for the difference
$$\wh{\eta}^F(g^F, \nabla^F, T^H_1X, g^{T^VX}_1, g^\lambda_1, \nabla^\lambda_1, L_F)-\wh{\eta}^E(g^E, \nabla^E, T^H_0X, g^{T^VX}_0, g^\lambda_0, \nabla^\lambda_0, L_E)$$
in terms of the geometric structures involved, where the complex vector bundles $E\to X$ and $F\to X$ are isomorphic. To establish the main result of this paper, we first prove the following special case.
\begin{prop}\label{prop 1.1}($=$ Proposition \ref{prop 3.2})
Let $\pi:X\to B$ be a submersion with closed, oriented and spin$^c$ fibers of even dimension, equipped with a Riemannian and differential spin$^c$ structure $(T^HX, g^{T^VX}, g^\lambda, \nabla^\lambda)$. Let $(E, g^E)$ be a Hermitian bundle and $(F, g^F, \nabla^F)$ a Hermitian bundle with a unitary connection. If there exists an isometric isomorphism $\alpha:(E, g^E)\to(F, g^F)$ and $L_{E, \alpha}\to B$ is a $\Z_2$-graded complex vector bundle representing the analytic index of $\DD^{S\otimes E, \alpha}$ (defined in terms of $\alpha^*\nabla^F$) in $K$-theory, then there exist a unique $\Z_2$-graded complex vector bundle $L_{F, \alpha}\to B$ and a unique $\Z_2$-graded isometric isomorphism $\wt{\alpha}^L:(L_{E, \alpha}, g^{L_{E, \alpha}})\to(L_{F, \alpha}, g^{L_{F, \alpha}})$ such that $L_{F, \alpha}\to B$ represents the analytic index of $\DD^{S\otimes F}$ (defined in terms of $\nabla^F$) in $K$-theory and
$$\wh{\eta}^E(g^E, \alpha^*\nabla^F, T^HX, g^{T^VX}, g^\lambda, \nabla^\lambda, L_{E, \alpha})=\wh{\eta}^F(g^F, \nabla^F, T^HX, g^{T^VX}, g^\lambda, \nabla^\lambda, L_{F, \alpha}).$$
\end{prop}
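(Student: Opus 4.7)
The plan is to transport the entire Mi\v s\v cenko--Fomenko construction from the $E$-side to the $F$-side via the isomorphism induced by $\alpha$, and then observe that every ingredient that enters the Bismut--Cheeger eta form is intertwined by this isomorphism. First, I would form the fiberwise unitary bundle isomorphism $\wt{\alpha}:=\id_S\otimes\alpha:S\otimes E\to S\otimes F$ covering $\id_X$. By the very definition of $\DD^{S\otimes E,\alpha}$, the connection on $E$ used to build it is $\alpha^*\nabla^F$, so $\wt{\alpha}$ carries the twisted Clifford connection on $S\otimes E$ to the one on $S\otimes F$ and therefore intertwines the two twisted spin$^c$ Dirac operators fiberwise. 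Pushing down by $\pi$, $\wt{\alpha}$ induces a $\Z_2$-graded isometric isomorphism of the infinite-dimensional Hilbert bundles $\pi_*(S\otimes E)\to\pi_*(S\otimes F)$ over $B$ that intertwines the two smooth families of Dirac operators and, consequently, the two Bismut superconnections.

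Next I would use this intertwining to transport the MF representative. Given $L_{E,\alpha}$ as in the hypothesis, define $L_{F,\alpha}:=\wt{\alpha}(L_{E,\alpha})$, equipped with the metric $g^{L_{F,\alpha}}$ making $\wt{\alpha}^L:=\wt{\alpha}|_{L_{E,\alpha}}$ isometric. Because $\wt{\alpha}$ intertwines the Dirac families, the MF splitting $\pi_*(S\otimes E)=L_{E,\alpha}\oplus L_{E,\alpha}^\perp$ is mapped to an analogous splitting on the $F$-side in which $\DD^{S\otimes F}$ has the required finite-rank cokernel behavior, so $L_{F,\alpha}$ represents $\ind^a([F])\in K(B)$. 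The uniqueness of $L_{F,\alpha}$ and of $\wt{\alpha}^L$ is immediate, since once we demand that $\wt{\alpha}^L$ be a $\Z_2$-graded isometric isomorphism extending the identification $\wt{\alpha}$ of ambient Hilbert bundles, both are forced.

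For the equality of eta forms, I would argue that every input to the construction of $\wh{\eta}^E(g^E,\alpha^*\nabla^F,\ldots,L_{E,\alpha})$ is intertwined with the corresponding input to $\wh{\eta}^F(g^F,\nabla^F,\ldots,L_{F,\alpha})$ by the $\Z_2$-graded unitary $\wt{\alpha}$: the rescaled Bismut superconnections, the orthogonal projections onto $L_{E,\alpha}^\perp$ and $L_{F,\alpha}^\perp$, and the projected $\Z_2$-graded unitary connections $\nabla^{L_{E,\alpha}}$ and $\nabla^{L_{F,\alpha}}$ (the last of these being intertwined by $\wt{\alpha}^L$ since connection projection commutes with conjugation by an isometric isomorphism). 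Since the eta form is a superconnection-theoretic supertrace of heat-kernel operators built from precisely these data, invariance of the supertrace under conjugation by a $\Z_2$-graded isometric isomorphism yields the desired identity.

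The main obstacle, I expect, is the careful bookkeeping entailed by the absence of the kernel bundle assumption. In this setting $\wh{\eta}^E$ is not the pure Bismut--Cheeger integral over $t\in(0,\infty)$ but contains corrective terms arising from $L_{E,\alpha}$ and the projected connection; one must therefore verify that these corrections transport correctly, which amounts to showing that the projection of the Bismut superconnection onto $L_{E,\alpha}$ matches the $\wt{\alpha}^L$-pullback of its counterpart on $L_{F,\alpha}$. Once the full superconnections are shown to be intertwined (as above), this reduces to orthogonal projection commuting with $\wt{\alpha}$, and the rest of the argument is a direct transcription.
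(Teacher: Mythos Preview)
Your proposal is correct and follows essentially the same route as the paper's proof. The paper makes explicit precisely the steps you sketch: it shows $\DD^{S\otimes F}=\alpha\circ\DD^{S\otimes E,\alpha}\circ\alpha^{-1}$, defines $L_{F,\alpha}^\pm:=\wt{\alpha}_\pm(L_{E,\alpha}^\pm)$ (together with the transported complementary bundles $K_{F,\alpha}^\pm$) to verify the MF property, checks $\nabla^{L_{E,\alpha}}=(\wt{\alpha}^L)^*\nabla^{L_{F,\alpha}}$ via the commuting square $\wt{\alpha}^L\circ P_{E,\alpha}=P_F\circ\wt{\alpha}$, then forms $\wh{\alpha}:=\wt{\alpha}\oplus\wt{\alpha}^{L,\op}$ on $\pi_*E\oplus L_{E,\alpha}^{\op}$ and verifies $\wh{\bbb}^{E,\alpha}=\wh{\alpha}^*\wh{\bbb}^F$ (including the perturbed operator $\wt{\DD}^{S\otimes E,\alpha}(z)$), concluding by conjugation invariance of the supertrace---exactly the ``corrective terms'' bookkeeping you anticipated.
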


The main result of this paper is the following theorem.
\begin{thm}\label{thm 1.1}($=$ Theorem \ref{thm 3.1})
Let $\pi:X\to B$ be a submersion with closed, oriented and spin$^c$ fibers of even dimension, and $(E, g^E, \nabla^E)$ and $(F, g^F, \nabla^F)$ are Hermitian bundles with unitary connections. Denote by $\DD^{S\otimes E}$ and $\DD^{S\otimes F}$ the twisted spin$^c$ Dirac operators defined in terms of the following Riemannian and differential spin$^c$ structures
$$(T^H_0X, g^{T^VX}_0, g^\lambda_0, \nabla^\lambda_0)\qquad\textrm{ and }\qquad(T^H_1X, g^{T^VX}_1, g^\lambda_1, \nabla^\lambda_1)$$
on $\pi:X\to B$, respectively, where the underlying topological spin$^c$ structures coincide. Let $L_E\to B$ and $L_F\to B$ be $\Z_2$-graded complex vector bundles that represent the analytic indexes of $\DD^{S\otimes E}$ and $\DD^{S\otimes F}$ in $K$-theory, respectively.

If there exists an isometric isomorphism $\alpha:(E, g^E)\to(F, g^F)$, then there exist balanced $\Z_2$-graded triples $(W_0, g^{W_0}, \nabla^{W_0})$ and $(W_1, g^{W_1}, \nabla^{W_1})$ and a $\Z_2$-graded isometric isomorphism $h:(L_E\oplus W_0, g^{L_E}\oplus g^{W_0})\to(L_F\oplus W_1, g^{L_F}\oplus g^{W_1})$ such that
\begin{displaymath}
\begin{split}
&\wh{\eta}^F(g^F, \nabla^F, T^H_1X, g^{T^VX}_1, g^\lambda_1, \nabla^\lambda_1, L_F)-\wh{\eta}^E(g^E, \nabla^E, T^H_0X, g^{T^VX}_0, g^\lambda_0, \nabla^\lambda_0, L_E)\\
&=\int_{X/B}\big(T\wh{A}(\nabla^{T^VX}_0, \nabla^{T^VX}_1)\wedge e^{\frac{1}{2}c_1(\nabla^\lambda_0)}
+\wh{A}(\nabla^{T^VX}_1)\wedge e^{\frac{1}{2}Tc_1(\nabla^\lambda_0, \nabla^\lambda_1)}\big)\wedge\ch(\nabla^E)\\
&\quad+\int_{X/B}\todd(\nabla^{T^VX}_1)\wedge\CS(\nabla^E, \alpha^*\nabla^F)-\CS(\nabla^{L_E}\oplus\nabla^{W_0}, h^*(\nabla^{L_F}\oplus\nabla^{W_1}))
\end{split}
\end{displaymath}
in \dis{\frac{\Omega^{\odd}(B)}{\im(d)}}.
\end{thm}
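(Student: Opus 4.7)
The plan is to reduce Theorem \ref{thm 1.1} to Proposition \ref{prop 1.1} combined with the variational formula from the author's earlier work \cite[Proposition 1]{H20}. The strategy is to disentangle the two logically distinct sources of variation in the difference of eta forms: the change in the differential spin$^c$ structure on $\pi:X\to B$, and the passage from $(E, g^E, \nabla^E)$ to the isomorphic bundle $(F, g^F, \nabla^F)$. I would handle the latter first via $\alpha$, and the former second via the earlier variational formula.

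The first step is to apply Proposition \ref{prop 1.1} with respect to the geometric structure indexed by $1$. This produces a unique $\Z_2$-graded Hermitian bundle $L_{E, \alpha}\to B$ representing the analytic index of $\DD^{S\otimes E, \alpha}$ (defined via $\alpha^*\nabla^F$) and a unique $\Z_2$-graded isometric isomorphism $\wt{\alpha}^L:(L_{E,\alpha}, g^{L_{E,\alpha}})\to(L_F, g^{L_F})$ with
$$\wh{\eta}^F(g^F, \nabla^F, T^H_1X, g^{T^VX}_1, g^\lambda_1, \nabla^\lambda_1, L_F) = \wh{\eta}^E(g^E, \alpha^*\nabla^F, T^H_1X, g^{T^VX}_1, g^\lambda_1, \nabla^\lambda_1, L_{E, \alpha}).$$
After this substitution the difference in Theorem \ref{thm 1.1} becomes
$$\wh{\eta}^E(g^E, \alpha^*\nabla^F, T^H_1X, g^{T^VX}_1, g^\lambda_1, \nabla^\lambda_1, L_{E,\alpha}) - \wh{\eta}^E(g^E, \nabla^E, T^H_0X, g^{T^VX}_0, g^\lambda_0, \nabla^\lambda_0, L_E),$$
in which both eta forms are now associated with the same Hermitian bundle $(E, g^E)$, albeit with possibly different unitary connections and different geometric data on $\pi:X\to B$. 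The second step is to apply \cite[Proposition 1]{H20} to this reduced difference. That formula, established without the kernel bundle assumption, covers precisely this situation and produces transgression terms for $\wh{A}$, $c_1$ and the connection on $E$, together with a Chern--Simons term on $B$ coming from a choice of balanced $\Z_2$-graded triples $(W_0, g^{W_0}, \nabla^{W_0})$ and $(W_1', g^{W_1'}, \nabla^{W_1'})$ stabilizing $L_E$ and $L_{E, \alpha}$ to isometrically isomorphic $\Z_2$-graded bundles via some isomorphism $\phi$. Using the decomposition $\todd=\wh{A}\wedge e^{c_1/2}$ together with the Leibniz-type identity for transgression forms, the $\wh{A}$- and $c_1$-transgression contributions assemble into
$$\int_{X/B}\bigl(T\wh{A}(\nabla^{T^VX}_0, \nabla^{T^VX}_1)\wedge e^{c_1(\nabla^\lambda_0)/2}+\wh{A}(\nabla^{T^VX}_1)\wedge e^{Tc_1(\nabla^\lambda_0, \nabla^\lambda_1)/2}\bigr)\wedge\ch(\nabla^E),$$
while the connection-on-$E$ transgression contributes exactly $\int_{X/B}\todd(\nabla^{T^VX}_1)\wedge\CS(\nabla^E, \alpha^*\nabla^F)$.

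The final step is to transport the stabilizing data through $\wt{\alpha}^L$. Setting $W_1:=W_1'$ and $h:=(\wt{\alpha}^L\oplus\id_{W_1})\circ\phi$ produces a $\Z_2$-graded isometric isomorphism $h:(L_E\oplus W_0, g^{L_E}\oplus g^{W_0})\to(L_F\oplus W_1, g^{L_F}\oplus g^{W_1})$; because $\wt{\alpha}^L$ is isometric, naturality of Chern--Simons forms under pull-back by unitary bundle isomorphisms converts the CS term from \cite[Proposition 1]{H20} into $\CS(\nabla^{L_E}\oplus\nabla^{W_0}, h^*(\nabla^{L_F}\oplus\nabla^{W_1}))$, completing the identity modulo $\im(d)$. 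The main obstacle is expected to be the compatibility bookkeeping at the end: ensuring that the transgression identities for $\wh{A}$, $c_1$ and $\todd$ really do combine in odd degree into the claimed formula, and that the index-bundle stabilization supplied by \cite[Proposition 1]{H20} interacts cleanly with the isomorphism $\wt{\alpha}^L$ provided by Proposition \ref{prop 1.1}, so that a single pair $(W_0, W_1)$ of balanced $\Z_2$-graded triples suffices.
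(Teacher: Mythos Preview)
Your approach is correct and uses the same two key ingredients as the paper's proof of Theorem \ref{thm 3.1} (namely Proposition \ref{prop 3.1}, the spin$^c$ analogue of \cite[Proposition 1]{H20}, and Proposition \ref{prop 3.2}), but applied in the opposite order, which buys a small simplification. The paper first applies Proposition \ref{prop 3.1} with an \emph{arbitrarily chosen} $L_{E,\alpha}$, then Proposition \ref{prop 3.2} to produce from it a specific $L_{F,\alpha}$ satisfying the MF property for $\DD^{S\otimes F}$, and finally invokes a third step (\cite[Corollary 1]{H20}) to compare $L_{F,\alpha}$ with the given $L_F$; three Chern--Simons terms on $B$ must then be amalgamated via (\ref{eq 2.2.5}) and (\ref{eq 3.2.33}). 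By instead constructing $L_{E,\alpha}$ directly from the given $L_F$ in your first step, you hit $L_F$ on the nose and avoid that third comparison entirely.

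One imprecision worth fixing: Proposition \ref{prop 1.1} as stated takes $L_{E,\alpha}$ as \emph{input} and produces $L_{F,\alpha}$ as output, not the other way round. To obtain $L_{E,\alpha}$ from the given $L_F$ you must apply it to the isometric isomorphism $\alpha^{-1}:(F,g^F)\to(E,g^E)$ with the connection $\alpha^*\nabla^F$ on $E$ (so that $(\alpha^{-1})^*(\alpha^*\nabla^F)=\nabla^F$), and then set $\wt{\alpha}^L$ to be the inverse of the resulting map. This also delivers $(\wt{\alpha}^L)^*\nabla^{L_F}=\nabla^{L_{E,\alpha}}$, the analogue of (\ref{eq 3.2.33}), which is precisely what makes your final transport step convert $\CS(\nabla^{L_E}\oplus\nabla^{W_0},\phi^*(\nabla^{L_{E,\alpha}}\oplus\nabla^{W_1}))$ into $\CS(\nabla^{L_E}\oplus\nabla^{W_0},h^*(\nabla^{L_F}\oplus\nabla^{W_1}))$. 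Finally, the $\wh{A}$/$c_1$/$\todd$ transgression bookkeeping you flag as a potential obstacle is already packaged inside the statement of Proposition \ref{prop 3.1}, so no separate assembly is required.
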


Note that Theorem \ref{thm 1.1} is a special case of the variational formula of the equivariant Bismut--Cheeger eta form by Liu \cite[Theorem 3.17]{L21}, which is proved in the setting of an equivariant version of Bunke--Schick's model of differential $K$-theory \cite{BS09}. A notable difference is the appearance of the equivariant higher spectral flow in \cite[Theorem 3.17]{L21} and the Chern--Simons form in Theorem \ref{thm 1.1}.

We present some applications of Proposition \ref{prop 1.1} and Theorem \ref{thm 1.1} in this paper. We use Proposition \ref{prop 1.1} to establish the $\Z_2$-graded additivity of the Bismut--Cheeger eta form.
\begin{thm}\label{thm 1.2}($=$ Theorem \ref{thm 4.1})
Let $\pi:X\to B$ be a submersion with closed, oriented and spin$^c$ fibers of even dimension, equipped with a Riemannian and differential spin$^c$ structure $(T^HX, g^{T^VX}, g^\lambda, \nabla^\lambda)$. Let $(E, g^E, \nabla^E)$ be a $\Z_2$-graded Hermitian bundle with a $\Z_2$-graded unitary connection. Denote by $\DD^{S\otimes E}$, $\DD^{S\otimes E^+}$ and $\DD^{S\otimes E^-}$ the spin$^c$ Dirac operators twisted by $E\to X$, $E^+\to X$ and $E^-\to X$, respectively. If $L_{E^+}\to B$ and $L_{E^-}\to B$ are $\Z_2$-graded complex vector bundles representing the analytic indexes of $\DD^{S\otimes E^+}$ and $\DD^{S\otimes E^-}$ in $K$-theory, respectively, then $L_{E^+}\oplus L_{E^-}^{\op}\to B$ represents the analytic index of $\DD^{S\otimes E}$ in $K$-theory and
\begin{displaymath}
\begin{split}
&\wh{\eta}^E(g^E, \nabla^E, T^HX, g^{T^VX}, g^\lambda, \nabla^\lambda, L_{E^+}\oplus L_{E^-}^{\op})\\
&=\wh{\eta}^{E^+}(g^{E, +}, \nabla^{E, +}, T^HX, g^{T^VX}, g^\lambda, \nabla^\lambda, L_{E^+})-\wh{\eta}^{E^-}(g^{E, -}, \nabla^{E, -}, T^HX, g^{T^VX}, g^\lambda, \nabla^\lambda, L_{E^-})
\end{split}
\end{displaymath}
in \dis{\frac{\Omega^{\odd}(B)}{\im(d)}}, where $L_{E^-}^{\op}\to B$ denotes the $\Z_2$-graded complex vector bundle with the opposite grading of $L_{E^-}\to B$.
\end{thm}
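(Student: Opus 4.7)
The basic geometric observation is that the $\Z_2$-graded twisted Dirac bundle decomposes orthogonally as
$$S \otimes E \cong (S \otimes E^+) \oplus (S \otimes E^-)^{\op},$$
where the superscript $\op$ on the second summand encodes the $-1$ action of the grading operator restricted to $E^-$. Consequently the twisted Dirac operator $\DD^{S \otimes E}$ and the Bismut superconnection associated to $(T^HX, g^{T^VX}, g^\lambda, \nabla^\lambda, g^E, \nabla^E)$ split as orthogonal direct sums under this decomposition, and the supertrace on the second summand is the negative of the supertrace on $S \otimes E^-$ with its intrinsic grading.

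I would then argue in two steps. First, applying the Mi\v s\v cenko--Fomenko construction with a common spectral cutoff for the Bismut superconnections of $\DD^{S \otimes E^+}$ and $\DD^{S \otimes E^-}$ produces natural representatives $L_{E^\pm}^{\natural}$, and the decomposition above identifies $L_{E^+}^{\natural} \oplus (L_{E^-}^{\natural})^{\op}$ as the corresponding natural representative of $\ind^a([E])$; since any two representatives of a given $K$-theory class differ by a balanced triple after stabilization, this proves the index claim for arbitrary $L_{E^\pm}$. Second, the Bismut--Cheeger eta integrand---the regularized supertrace of $\partial_s \bbb_s \, e^{-\bbb_s^2}$, corrected by the contribution of the projection onto the given representative $L$---splits as the sum of the $E^+$ integrand and the negative of the $E^-$ integrand when $L = L_{E^+} \oplus L_{E^-}^{\op}$, because the projection $P_L$ then decomposes as $P_{L_{E^+}} \oplus P_{L_{E^-}}$ and the minus sign comes from the reversed grading on the second summand. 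Integration over $s \in (0, \infty)$ together with the standard boundary analysis at $s = 0$ and $s = \infty$ yields
$$\wh{\eta}^E(\ldots, L_{E^+}^{\natural} \oplus (L_{E^-}^{\natural})^{\op}) = \wh{\eta}^{E^+}(\ldots, L_{E^+}^{\natural}) - \wh{\eta}^{E^-}(\ldots, L_{E^-}^{\natural})$$
in $\Omega^{\odd}(B) / \im(d)$.

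To upgrade this identity from the natural representatives $L_{E^\pm}^{\natural}$ to arbitrary choices $L_{E^\pm}$, I would invoke Proposition \ref{prop 1.1}: taking the direct sum of the $\Z_2$-graded isometric isomorphisms $L_{E^\pm}^{\natural} \oplus W^\pm \to L_{E^\pm} \oplus W^\pm$ provided by stabilization by balanced triples produces a $\Z_2$-graded isometric isomorphism between the natural representative on the $E$ side and $L_{E^+} \oplus L_{E^-}^{\op}$, and the uniqueness clause in Proposition \ref{prop 1.1} forces the associated changes in the eta forms on the two sides to be governed by matching Chern--Simons corrections that cancel by the direct sum structure. The main technical obstacle will be tracking the opposite grading under the $\op$-flip so that the stabilizations on $E^+$, $E^-$, and $E$ align coherently and the resulting corrections cancel exactly modulo exact forms; once this bookkeeping is in place, the identity at arbitrary representatives follows from the natural case.
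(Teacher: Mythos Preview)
Your approach is genuinely different from the paper's, and it has a gap in how you invoke Proposition~\ref{prop 1.1}.

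\textbf{What the paper actually does.} The paper never attempts a direct block-diagonal splitting of the perturbed superconnection $\wh{\bbb}^E$. Instead it uses a rearrangement trick: form the \emph{ungraded} direct sum $E\oplus E^-$ (the graded $E$ plus an extra ungraded copy of $E^-$), and observe that the permutation $\alpha((a,b),c)=(a,(c,b))$ gives an isometric isomorphism $E\oplus E^-\cong E^+\oplus H$, where $H$ is the balanced $\Z_2$-graded triple with $H^\pm=E^-$. Proposition~\ref{prop 3.2} (= Proposition~\ref{prop 1.1}) is then applied to this isomorphism of twisting bundles over $X$, yielding equality of the two eta forms. Ungraded additivity (Proposition~\ref{prop 4.1}) on both sides and the vanishing $\wh{\eta}^H\equiv 0$ for balanced triples (Proposition~\ref{prop 4.2}) then give $\wh{\eta}^E+\wh{\eta}^{E^-}\equiv\wh{\eta}^{E^+}$, which is the claim.

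\textbf{The gap in your proposal.} Proposition~\ref{prop 1.1} takes as input an isometric isomorphism $\alpha:(E,g^E)\to(F,g^F)$ between \emph{twisting bundles on $X$} and produces a specific MF representative $L_{F,\alpha}$ for $\DD^{S\otimes F}$ with equal eta form. It says nothing about passing between two different MF representatives for the \emph{same} Dirac operator; that comparison is governed by the variational formula of \cite[Corollary~1]{H20} (invoked repeatedly in the paper), which produces a Chern--Simons correction, not an equality. So your ``upgrade'' step, where you take stabilized isomorphisms of $L_{E^\pm}^\natural$ with arbitrary $L_{E^\pm}$ over $B$ and feed them into Proposition~\ref{prop 1.1}, misapplies the proposition. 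You would instead need to apply \cite[Corollary~1]{H20} three times (for $E^+$, $E^-$, $E$) and check that the resulting Chern--Simons terms match under the direct-sum-with-$\op$ structure---doable, but not what you wrote.

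\textbf{A secondary subtlety.} Your direct splitting of the eta integrand requires that on the $(\pi_*E^-\oplus L_{E^-}^{\op})^{\op}$ block the restriction of $\wt{\DD}^{S\otimes E}(z)$ agree, as an operator, with $\wt{\DD}^{S\otimes E^-}(z)$. Tracing the definitions, the $E^-$ block of $\wt{\DD}^{S\otimes E}_+(z)$ involves $i^+_{E^-}$ and $P^-_{E^-}$, whereas $\wt{\DD}^{S\otimes E^-}_-(z)=(\wt{\DD}^{S\otimes E^-}_+(z))^*$ involves $(P^+_{E^-})^*$ and $(i^-_{E^-})^*$; these coincide only when the MF decomposition $\pi_*E^-=K_{E^-}\oplus L_{E^-}$ is orthogonal. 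The paper's indirect route via the rearrangement $\alpha$ sidesteps this bookkeeping entirely. Your ``common spectral cutoff'' would give orthogonal projections, but such a global cutoff need not exist when eigenvalues cross, which is precisely the regime the MF construction is designed to handle.
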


While the additivity of the Bismut--Cheeger eta form is well known for ungraded direct sums (at least in the case when the Dirac operators satisfy the kernel bundle assumption), its $\Z_2$-graded additivity is less well known. We could not find a statement nor a proof of this result in the literature. For the sake of completeness, we also give a proof of the additivity of the Bismut--Cheeger eta form for ungraded direct sums without the kernel bundle assumption (Proposition \ref{prop 4.1}).

The first application of Theorem \ref{thm 1.1} concerns the analytic index in differential $K$-theory. Given a submersion $\pi:X\to B$ with closed, oriented and spin$^c$ fibers of even dimension equipped with a Riemannian and differential spin$^c$ structure $(T^HX, g^{T^VX}, g^\lambda, \nabla^\lambda)$, the analytic index in differential $K$-theory \cite[Definition 7.27]{FL10} is defined to be
$$\ind^a_{\wh{K}}(\E; L)=\bigg(L, g^L, \nabla^L, \int_{X/B}\todd(\nabla^{T^VX})\wedge\omega+\wh{\eta}^E(g^E, \nabla^E, T^HX, g^{T^VX}, g^\lambda, \nabla^\lambda, L)\bigg),$$
where $\E$ is a generator of the Freed--Lott differential $K$-group $\wh{K}_{\FL}(X)$ and $L\to B$ is a $\Z_2$-graded complex vector bundle representing the analytic index of $\DD^{S\otimes E}$ in $K$-theory. We use Theorem \ref{thm 1.1} and Proposition \ref{prop 4.1} to prove the following result.
\begin{prop}\label{prop 1.2}
Let $\pi:X\to B$ be a submersion with closed, oriented and spin$^c$ fibers of even dimension, equipped with a Riemannian and differential spin$^c$ structure. The analytic index in differential $K$-theory
$$\ind^a_{\wh{K}}:\wh{K}_{\FL}(X)\to\wh{K}_{\FL}(B)$$
is a well defined group homomorphism.
\end{prop}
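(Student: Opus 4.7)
The plan is to break the statement into four checks: (a) the image quadruple actually satisfies the defining relation of a generator of $\wh{K}_{\FL}(B)$; (b) the construction does not depend on the auxiliary $\Z_2$-graded bundle $L\to B$ representing $\ind^a([E])$; (c) it descends to the equivalence relation defining $\wh{K}_{\FL}(X)$; and (d) it is additive. The Riemannian and differential spin$^c$ structure on $\pi$ is fixed throughout Proposition \ref{prop 1.2}, so only the geometric data on the twisting bundle and the auxiliary choices vary. For (a), applying $d$ to the form component and invoking the local FIT $d\wh{\eta}^E = \int_{X/B}\todd(\nabla^{T^VX})\wedge\ch(\nabla^E) - \ch(\nabla^L)$ recalled in the introduction, together with $d\ch(\nabla^E) = 0$, yields exactly the Freed--Lott compatibility required on the image.

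For (b), I would apply Proposition \ref{prop 1.1} with $E = F$ and $\alpha = \id$. Any two choices $L$ and $L'$ representing the same analytic index become $\Z_2$-graded isometrically isomorphic after stabilization by balanced triples (which contribute trivially to $\ch$ and to Chern--Simons forms), and Proposition \ref{prop 1.1} then identifies the two resulting eta forms up to the Chern--Simons term of the relating isomorphism. This is precisely the Freed--Lott equivalence needed to conclude that the two output quadruples represent the same class in $\wh{K}_{\FL}(B)$.

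For (c), suppose $(E_0, g^{E_0}, \nabla^{E_0}, \omega_0)$ and $(E_1, g^{E_1}, \nabla^{E_1}, \omega_1)$ represent the same class in $\wh{K}_{\FL}(X)$; after stabilization they are related by an isometric isomorphism $\alpha:(E_0, g^{E_0})\to(E_1, g^{E_1})$ with $\omega_1 - \omega_0 = \CS(\nabla^{E_0}, \alpha^*\nabla^{E_1})$ in $\frac{\Omega^{\odd}(X)}{\im(d)}$. Since the Riemannian and differential spin$^c$ structure on $\pi$ is held fixed, both transgression terms $T\wh{A}(\nabla^{T^VX}_0, \nabla^{T^VX}_1)$ and $Tc_1(\nabla^\lambda_0, \nabla^\lambda_1)$ in Theorem \ref{thm 1.1} vanish, and the formula collapses to $\wh{\eta}^{E_1} - \wh{\eta}^{E_0} = \int_{X/B}\todd(\nabla^{T^VX})\wedge\CS(\nabla^{E_0}, \alpha^*\nabla^{E_1}) - \CS(\nabla^{L_{E_0}}\oplus\nabla^{W_0}, h^*(\nabla^{L_{E_1}}\oplus\nabla^{W_1}))$. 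The first Chern--Simons term cancels the change $\omega_1 - \omega_0$ in the form component, while the second expresses precisely the Freed--Lott equivalence between the stabilized output quadruples via the isomorphism $h$.

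For (d), given generators $\E_1, \E_2$ with chosen $L_1, L_2$, the $\Z_2$-graded direct sum $L_1 \oplus L_2$ represents the analytic index of $\DD^{S\otimes(E_1\oplus E_2)}$, and Proposition \ref{prop 4.1} supplies additivity of the Bismut--Cheeger eta form for ungraded direct sums. Combined with linearity of $\int_{X/B}\todd(\nabla^{T^VX})\wedge(-)$ and additivity of $\ch$, this yields $\ind^a_{\wh{K}}(\E_1 + \E_2; L_1\oplus L_2) = \ind^a_{\wh{K}}(\E_1; L_1) + \ind^a_{\wh{K}}(\E_2; L_2)$ in $\wh{K}_{\FL}(B)$. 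The main obstacle will be step (c): carefully matching the balanced bundles $W_0, W_1$ and the $\Z_2$-graded isometric isomorphism $h$ produced by Theorem \ref{thm 1.1} with the stabilization and Chern--Simons data packaged into the Freed--Lott equivalence on the base, and verifying that all of these absorptions define the same class in $\wh{K}_{\FL}(B)$.
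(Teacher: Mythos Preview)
Your approach is essentially the same as the paper's: invoke Theorem \ref{thm 1.1} with fixed Riemannian and differential spin$^c$ structure to compare the eta forms of equivalent generators, and use Proposition \ref{prop 4.1} for additivity. Two small corrections. In step (b), Proposition \ref{prop 1.1} with $E=F$ and $\alpha=\id$ is vacuous (it produces back the \emph{same} $L$); independence of $L$ is what the paper invokes as \cite[Corollary 1]{H20}, which is the special case of Proposition \ref{prop 3.1} (or Theorem \ref{thm 1.1}) with all geometric data on $E$ and on $\pi$ held fixed. In step (c), the obstacle you flag is resolved in the paper by Proposition \ref{prop 4.2}: the balanced stabilizers $V_E$, $V_F$ admit balanced index bundles $L_{V_E}$, $L_{V_F}$ with $\wh{\eta}^{V_E}\equiv 0\equiv\wh{\eta}^{V_F}$, so that by Proposition \ref{prop 4.1} one has $\wh{\eta}^{E\oplus V_E}\equiv\wh{\eta}^E$ and $\wh{\eta}^{F\oplus V_F}\equiv\wh{\eta}^F$; Theorem \ref{thm 1.1} is then applied to the stabilized bundles $E\oplus V_E$ and $F\oplus V_F$, and the resulting Chern--Simons term on $B$ involves $L_E\oplus L_{V_E}\oplus W_E$ versus $L_F\oplus L_{V_F}\oplus W_F$ with $L_{V_E}\oplus W_E$ and $L_{V_F}\oplus W_F$ both balanced, which is exactly the Freed--Lott equivalence witnessing $\ind^a_{\wh{K}}(\E)=\ind^a_{\wh{K}}(\F)$.
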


Proposition \ref{prop 1.2} is first derived by Freed--Lott \cite[(2) of Corollary 7.36]{FL10} as a consequence of the fact that the topological index in differential $K$-theory $\ind^t_{\wh{K}}:\wh{K}_{\FL}(X)\to\wh{K}_{\FL}(B)$ is a well defined group homomorphism \cite[(2) of Proposition 4.18 and Lemma 5.30]{FL10} and the FIT in differential $K$-theory \cite[Theorem 7.35]{FL10}, i.e. for every generator $\E$ of $\wh{K}_{\FL}(X)$,
$$\ind^a_{\wh{K}}(\E)=\ind^t_{\wh{K}}(\E).$$
Our proof of Proposition \ref{prop 1.2} does not rely on these results. It is worth noting that the construction of the topological index in differential $K$-theory is complicated, and the proofs of the well definedness of the topological index $\ind^t_{\wh{K}}$ and the FIT in differential $K$-theory are highly nontrivial.

Since the analytic index in differential $K$-theory $\ind^a_{\wh{K}}:\wh{K}_{\FL}(X)\to\wh{K}_{\FL}(B)$ restricts to the analytic index in $\R/\Z$ $K$-theory \cite[Definition 14]{L94}
\begin{equation}\label{eq 1.0.1}
\ind^a_{\R/\Z}:K^{-1}_{\LL}(X)\to K^{-1}_{\LL}(B),
\end{equation}
as an immediate consequence of Proposition \ref{prop 1.2}, (\ref{eq 1.0.1}) is also a well defined group homomorphism.

Finally, we use Theorems \ref{thm 1.1} and \ref{thm 1.2} to give an alternative proof of the RRG theorem in $\R/\Z$ $K$-theory.
\begin{thm}\label{thm 1.3}
Let $\pi:X\to B$ be a submersion with closed, oriented and spin$^c$ fibers of even dimension, equipped with a Riemannian and differential spin$^c$ structure. The following diagram commutes.
\begin{equation}\label{eq 1.0.2}
\begin{tikzcd}
K^{-1}_{\LL}(X) \arrow{r}{\ch_{\R/\Q}} \arrow{d}[swap]{\ind^a_{\R/\Z}} & H^{\odd}(X; \R/\Q) \arrow{d}{\int_{X/B}\todd(T^VX)\cup(\cdot)} \\ K^{-1}_{\LL}(B) \arrow{r}[swap]{\ch_{\R/\Q}} & H^{\odd}(B; \R/\Q)
\end{tikzcd}
\end{equation}
That is, for every $\Z_2$-graded generator $\E$ of $K^{-1}_{\LL}(X)$,
\begin{equation}\label{eq 1.0.3}
\ch_{\R/\Q}(\ind^a_{\R/\Z}(\E))=\int_{X/B}\todd(T^VX)\cup\ch_{\R/\Q}(\E).
\end{equation}
\end{thm}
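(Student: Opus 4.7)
The plan is to work on $\Z_2$-graded generators of $K^{-1}_{\LL}(X)$, use Proposition \ref{prop 1.2} combined with Theorem \ref{thm 1.2} to exhibit an explicit $K^{-1}_{\LL}(B)$-cycle representing $\ind^a_{\R/\Z}(\E)$, and then to deduce (\ref{eq 1.0.3}) by applying $\ch_{\R/\Q}$ and invoking the local family index theorem.

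Represent a $\Z_2$-graded generator of $K^{-1}_{\LL}(X)$ in the sense of \cite[Definition 14]{L94} by a quadruple $\E=(E, g^E, \nabla^E, \omega)$, with $E=E^+\oplus E^-$ a $\Z_2$-graded Hermitian bundle with $\Z_2$-graded unitary connection of equal ranks and $\omega\in\Omega^{\odd}(X)/\im(d)$ satisfying the Lott constraint relating $d\omega$ to $\ch(\nabla^{E^+})-\ch(\nabla^{E^-})$; by definition $\ch_{\R/\Q}(\E)$ is represented by $[\omega]$. By Proposition \ref{prop 1.2}, viewing $\E$ as a class in $\wh{K}_{\FL}(X)$, its differential analytic index is
$$\ind^a_{\wh{K}}(\E; L_{E^+}\oplus L_{E^-}^{\op})=\Bigl(L_{E^+}\oplus L_{E^-}^{\op},\,g^L,\,\nabla^L,\,\int_{X/B}\todd(\nabla^{T^VX})\wedge\omega+\wh{\eta}^E\Bigr),$$
and Theorem \ref{thm 1.2} rewrites the total eta contribution as $\wh{\eta}^{E^+}-\wh{\eta}^{E^-}$. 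A direct calculation combining the local FIT $d\wh{\eta}^{E^\pm}=\int_{X/B}\todd(\nabla^{T^VX})\wedge\ch(\nabla^{E^\pm})-\ch(\nabla^{L_{E^\pm}})$ with the constraint on $d\omega$ shows that the resulting quadruple satisfies the Lott constraint on $B$, so it represents $\ind^a_{\R/\Z}(\E)\in K^{-1}_{\LL}(B)$.

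Applying $\ch_{\R/\Q}$ to the above representative, the identity (\ref{eq 1.0.3}) reduces to showing that $\wh{\eta}^{E^+}-\wh{\eta}^{E^-}$ has rational cohomology class modulo exact forms. The local FIT expresses $d(\wh{\eta}^{E^+}-\wh{\eta}^{E^-})$ as the difference of the closed forms $\int_{X/B}\todd(\nabla^{T^VX})\wedge(\ch(\nabla^{E^+})-\ch(\nabla^{E^-}))$ and $\ch(\nabla^{L_{E^+}})-\ch(\nabla^{L_{E^-}})$; by the classical Atiyah--Singer family index theorem applied to $[E^+]-[E^-]\in K(X)$ these represent the same rational cohomology class, and the desired rationality follows, giving the required equality modulo $H^{\odd}(B;\Q)$.

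The main obstacle is verifying that the $K^{-1}_{\LL}(B)$-cycle constructed above is well defined, i.e.\ independent of the Riemannian and differential spin$^c$ structure, the connection $\nabla^E$, and the choice of index bundles $L_{E^\pm}$. This is precisely the content of Theorem \ref{thm 1.1}: its correction terms, namely the integrated transgressions $T\wh{A}$ and $Tc_1$ together with the Chern--Simons term, match exactly the defining equivalences between cycle representatives in $K^{-1}_{\LL}(B)$. A secondary technical point is to align the sign conventions between Lott's model of $K^{-1}(X;\R/\Z)$, the Freed--Lott description of differential K-theory, and the $\R/\Q$ Chern character.
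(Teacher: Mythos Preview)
Your proposal contains a genuine gap in two related places. First, you assert that ``by definition $\ch_{\R/\Q}(\E)$ is represented by $[\omega]$''. In the paper's model (and in Lott's), this is not the definition: for a $\Z_2$-graded generator one must pass to a multiple $k$ for which $kE^+\cong kE^-$, choose an isometric isomorphism $\alpha$, and set
\[
\ch_{\R/\Q}(\E)=\Big[\tfrac{1}{k}\CS(\alpha^*(k\nabla^{E,-}),k\nabla^{E,+})+\omega\Big]\bmod\Q
\]
as in (\ref{eq 4.3.2}). The Chern--Simons term cannot be dropped; it is precisely what makes the class well defined and is the term one must match on the $B$-side. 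The same omission recurs when you evaluate $\ch_{\R/\Q}$ on the index cycle: there is an analogous $\CS$ term built from an isomorphism $kL^+_{\,\cdot}\cong kL^-_{\,\cdot}$, not merely the form $\int_{X/B}\todd\wedge\omega+\wh{\eta}^E$.

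Second, your ``rationality'' argument does not establish what is needed. Knowing that $d(\wh{\eta}^{E^+}-\wh{\eta}^{E^-})$ equals the difference of two closed forms that are rationally cohomologous only tells you that $\wh{\eta}^{E^+}-\wh{\eta}^{E^-}$ becomes closed after subtracting some primitive; it says nothing about the resulting de~Rham class being rational. The secondary invariant lives exactly in the part of $H^{\odd}(B;\R)/H^{\odd}(B;\Q)$ that this reasoning cannot access. In the paper the key step is not a rationality argument but the extended variational formula (Theorem~\ref{thm 1.1}) applied to the isomorphism $\alpha:kE^+\to kE^-$: this produces the identity
\[
\wh{\eta}^{kE^-}-\wh{\eta}^{kE^+}\equiv\int_{X/B}\todd(\nabla^{T^VX})\wedge\CS(k\nabla^{E,+},\alpha^*(k\nabla^{E,-}))-\CS(k\nabla^{L_{E^+}},h^*(k\nabla^{L_{E^-}})),
\]
which, after dividing by $k$ and using Theorem~\ref{thm 1.2}, matches the $\CS$ contributions in $\ch_{\R/\Q}$ on both sides at the differential-form level. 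You invoke Theorem~\ref{thm 1.1} only for well-definedness, but it is in fact the central computational device; without it the two $\CS$ terms in the definition of $\ch_{\R/\Q}$ remain unaccounted for.
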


Here, $\ch_{\R/\Q}$ is the Chern character in $\R/\Z$ $K$-theory. We prove (\ref{eq 1.0.3}) at the differential form level. Note that (\ref{eq 1.0.3}) implies the commutativity of diagram (\ref{eq 1.0.2}) only if (\ref{eq 1.0.1}) is well defined.

The paper is organized as follows. In \S\ref{s 2.1} we set and fix the notations and conventions used throughout the paper, and in \S\ref{s 2.2} we recall the definitions and properties of some primary and secondary characteristic forms. In \S\ref{s 3.1} we review the local FIT for twisted spin$^c$ Dirac operators without the kernel bundle assumption by the Mi\v s\v cenko--Fomenko--Freed--Lott approach, and in \S\ref{s 3.2} we prove the extended variational formula for the Bismut--Cheeger eta form (Theorem \ref{thm 1.1}). In \S\ref{s 4.1}, we establish some intermediate results on the Bismut--Cheeger eta form, which, together with Theorem \ref{thm 1.1}, allow us to prove the $\Z_2$-graded additivity of the Bismut--Cheeger eta form (Theorem \ref{thm 1.2}). Furthermore, we prove the analytic index in differential $K$-theory is a well defined group homomorphism (Proposition \ref{prop 1.2}) in \S\ref{s 4.2} and the RRG theorem in $\R/\Z$ $K$-theory (Theorem \ref{thm 1.3}) in \S\ref{s 4.3}.

\section*{Acknowledgements}

The author would like to thank Steve Rosenberg for his comments and suggestions for this paper, and Jonathan Kin-Yue Lee, where the idea of Proposition \ref{prop 4.2} is due to him. The author would also like to thank the referee for the helpful comments.

\section{Preliminaries}\label{s 2}

\subsection{Notations and conventions}\label{s 2.1}

In this paper, $X$ and $B$ are closed manifolds and $I$ is the closed interval $[0, 1]$. Given a manifold $X$, define $\wt{X}=X\times I$. Given $t\in I$, define a map $i_{X, t}:X\to\wt{X}$ by $i_{X, t}(x)=(x, t)$. Denote by $p_X:\wt{X}\to X$ the standard projection map. For $k\geq 0$, denote by $\Omega^k_A(X)$ the set of all real-valued closed $k$-forms on $X$ with periods in $A$, where $A$ is a proper subgroup of $\R$. For any differential forms $\omega$ and $\eta$, we write $\omega\equiv\eta$ if $\omega-\eta\in\im(d)$.

Let $\pi:M\to B$ a smooth fiber bundle with compact fibers of dimension $n$ which satisfies certain orientability assumptions. Then
$$\int_{M/B}\pi^*\alpha\wedge\beta=\alpha\wedge\bigg(\int_{M/B}\beta\bigg)$$
for any $\alpha\in\Omega^\bullet(B)$ and $\beta\in\Omega^\bullet(M)$. If $M$ has nonempty boundary, then Stokes' theorem for integration along the fibers \cite[Problem 4 of Chapter VII]{GHV} states that for any $\omega\in\Omega^k(M)$,
\begin{equation}\label{eq 2.1.1}
(-1)^{k-n+1}\int_{\partial M/B}i^*\omega=\int_{M/B}d^M\omega-d^B\int_{M/B}\omega,
\end{equation}
where $i:\partial M\hto M$ is the inclusion map .

Let $E\to X$ be a complex vector bundle. If $E\to X$ is $\Z_2$-graded, denote by $E^{\op}\to X$ the $\Z_2$-graded complex vector bundle whose $\Z_2$-grading is the opposite to that of $E\to X$. We will also use the notation $\op$ for other $\Z_2$-graded objects. A triple $(E, g^E, \nabla^E)$ consisting of a complex vector bundle with a Hermitian metric and a unitary connection is said to be $\Z_2$-graded if $E\to X$ is $\Z_2$-graded and $g^E$ and $\nabla^E$ preserve the $\Z_2$-grading (which are also said to be $\Z_2$-graded). A $\Z_2$-graded triple $(E, g^E, \nabla^E)$ is said to be balanced if $E^+=E^-$, $g^{E, +}=g^{E, -}$ and $\nabla^{E, +}=\nabla^{E, -}$.
\begin{remark}\label{remark 2.1}
Let $F\to X$ be another complex vector bundle. Suppose there exists a smooth bundle isomorphism $\alpha:E\to F$.
\begin{enumerate}
  \item We use the same symbol to denote the resulting $C^\infty(X)$-module isomorphism $\Gamma(X, E)\to\Gamma(X, F)$ and some others, for example, $\Gamma(X, T^*X\otimes E)\to\Gamma(X, T^*X\otimes F)$.
  \item Let $g^E$ and $g^F$ be Hermitian metrics on $E\to X$ and $F\to X$, respectively. Since $g^E$ and $\alpha^*g^F$ are Hermitian metrics on $E\to X$, it follows from \cite[Theorem 8.8 of Chapter I]{K08} that there exists a unique $f\in\Aut(E)$ such that $g^E=f^*(\alpha^*g^F)=(\alpha\circ f)^*g^F$.

      Henceforth, once Hermitian metrics are put on $E\to X$ and $F\to X$, we always assume a given smooth bundle isomorphism $\alpha:E\to F$ is isometric.
  \item Let $\nabla^F$ be a connection on $F\to X$. Write $\alpha^*\nabla^F$ for the connection on $E\to X$ defined by
        \begin{equation}\label{eq 2.1.2}
        \alpha^*\nabla^F:=\alpha^{-1}\circ\nabla^F\circ\alpha.
        \end{equation}
        If $\nabla^F$ is compatible with $g^F$, it follows from our convention that $\alpha^*\nabla^F$ is compatible with $g^E$.
\end{enumerate}
\end{remark}

\subsection{Some primary and secondary characteristic forms}\label{s 2.2}

In this subsection we recall the definitions and properties of some primary and secondary characteristic forms. We refer the readers to \cite[\S B.5]{MM07} for the details.

Let $(E, g^E, \nabla^E)$ be a triple. Denote by $R^E$ the curvature of $\nabla^E$. The Chern character form of $\nabla^E$ is defined to be
$$\ch(\nabla^E)=\tr(e^{-\frac{1}{2\pi i}R^E})\in\Omega^{\even}_\Q(X),$$
and the first Chern form of $\nabla^E$ is defined to be
$$c_1(\nabla^E)=-\frac{1}{2\pi i}\tr(R^E)\in\Omega^2_\Z(X).$$

Let $(g^E_0, \nabla^E_0)$ and $(g^E_1, \nabla^E_1)$ be two pairs of Hermitian metrics and unitary connections on $E\to X$. By (b) of Remark \ref{remark 2.1} there exists a unique $f\in\Aut(E)$ such that $g^E_0=f^*g^E_1$. Thus $f^*\nabla^E_1$ is unitary with respect to $g^E_0$. For each $t\in I$, let $f_t=(1-t)\id_E+tf^{-1}$. Then
\begin{equation}\label{eq 2.2.1}
g^E_t=f_t^*g^E_0,\qquad\nabla^E_t=f_t^*\big((1-t)\nabla^E_0+tf^*\nabla^E_1\big)
\end{equation}
are smooth paths of Hermitian metrics and unitary connections from $g^E_0$ to $g^E_1$ and from $\nabla^E_0$ to $\nabla^E_1$, respectively. Note that $\nabla^E_t$ is unitary with respect to $g^E_t$ for each $t\in I$. Define a complex vector bundle $\e\to\wt{X}$ by $\e=p_X^*E$. Then
\begin{equation}\label{eq 2.2.2}
g^\e:=p_X^*g^E_t,\qquad\nabla^\e:=dt\wedge\bigg(\frac{\partial}{\partial t}+\frac{1}{2}(g^E_t)^{-1}\frac{\partial}{\partial t}g^E_t\bigg)+\nabla^E_t
\end{equation}
are a Hermitian metric and a unitary connection on $\e\to\wt{X}$, respectively, satisfying $i_{X, j}^*\nabla^\e=\nabla^E_j$ for $j\in\set{0, 1}$. The Chern--Simons form \dis{\CS(\nabla^E_0, \nabla^E_1)\in\frac{\Omega^{\odd}(X)}{\im(d)}} is defined by
$$\CS(\nabla^E_0, \nabla^E_1):=-\int_{\wt{X}/X}\ch(\nabla^\e)\mod\im(d).$$
Note that $\CS(\nabla^E_0, \nabla^E_1)$ does not depend on the choice of $\nabla^\e$ satisfying $i_{X, j}^*\nabla^\e=\nabla^E_j$ for $j\in\set{0, 1}$ and satisfies the following transgression formula
$$d\CS(\nabla^E_0, \nabla^E_1)=\ch(\nabla^E_1)-\ch(\nabla^E_0).$$
Equivalently, the Chern--Simons form can be defined as
\begin{equation}\label{eq 2.2.3}
\CS(\nabla^E_0, \nabla^E_1)=-\frac{1}{2\pi i}\int^1_0\tr\bigg(\frac{d\nabla^E_t}{dt}e^{-\frac{1}{2\pi i}R^E_t}\bigg)dt.
\end{equation}
The choices of 0 and 1 are immaterial. If $t<T$ are two fixed positive real numbers, then one can replace 0 by $t$ and 1 by $T$ in (\ref{eq 2.2.3}).

One can also define a differential form \dis{Tc_1(\nabla^E_0, \nabla^E_1)\in\frac{\Omega^1(X)}{\im(d)}} that satisfies the following transgression formula
$$dTc_1(\nabla^E_0, \nabla^E_1)=c_1(\nabla^E_1)-c_1(\nabla^E_0).$$

Let $(E, g^E_k, \nabla^E_k)$ and $(F, g^F_k, \nabla^F_k)$ be triples, where $k\in\set{0, 1}$. The Chern--Simons form satisfies the following properties:
\begin{align}
\CS(\nabla^E_1, \nabla^E_0)&\equiv-\CS(\nabla^E_0, \nabla^E_1),\label{eq 2.2.4}\\
\CS(\nabla^E_1, \nabla^E_0)&\equiv\CS(\nabla^E_1, \nabla^E_2)+\CS(\nabla^E_2, \nabla^E_0),\label{eq 2.2.5}\\
\CS(\nabla^E_1\oplus\nabla^F_1, \nabla^E_0\oplus\nabla^F_0)&\equiv\CS(\nabla^E_1, \nabla^E_0)+\CS(\nabla^F_1, \nabla^F_0).\label{eq 2.2.6}
\end{align}
If $(E, g^E)$ is a $\Z_2$-graded Hermitian bundle with two $\Z_2$-graded unitary connections $\nabla^E_0$ and $\nabla^E_1$, then
\begin{equation}\label{eq 2.2.7}
\CS(\nabla_0^E, \nabla^E_1)\equiv\CS(\nabla_0^{E, +}, \nabla_1^{E, +})-\CS(\nabla_0^{E, -}, \nabla_1^{E, -}).
\end{equation}

Let $(H, g^H, \nabla^H)$ be a Euclidean bundle with a Euclidean connection. Denote by $R^H$ the curvature of $\nabla^H$. The $\wh{A}$-genus form of $\nabla^H$ is defined to be
$$\wh{A}(\nabla^H)=\sqrt{\det\bigg(\frac{-\frac{1}{4\pi i}R^H}{\sinh(-\frac{1}{4\pi i}R^H)}\bigg)}\in\Omega^{4\bullet}_\Q(X).$$
Similarly, one can define a differential form \dis{T\wh{A}(\nabla^H_0, \nabla^H_1)\in\frac{\Omega^{4\bullet-1}(X)}{\im(d)}} that satisfies the following transgression formula
$$dT\wh{A}(\nabla^H_0, \nabla^H_1)=\wh{A}(\nabla^H_1)-\wh{A}(\nabla^H_0).$$

\section{An extended variational formula for the Bismut--Cheeger eta form}\label{s 3}

\subsection{Local index theory for twisted spin$^c$ Dirac operators: the Mi\v s\v cenko--Fomenko--Freed--Lott approach}\label{s 3.1}

In this subsection we review the statement of the local FIT for twisted spin$^c$ Dirac operators without the kernel bundle assumption by Freed--Lott. We refer the readers to \cite[Chapter 10]{BGV} and \cite[\S7]{FL10} for the details.

Let $\pi:X\to B$ be a submersion with closed, oriented and spin$^c$ fibers $Z$ of even dimension $n$. Denote by $T^VX\to X$ the vertical tangent bundle. Recall from \cite[p.918]{FL10} that a Riemannian structure on $\pi:X\to B$ consists of a horizontal distribution $T^HX\to X$, i.e. $TX=T^VX\oplus T^HX$, and a metric $g^{T^VX}$ on $T^VX\to X$. Denote by $P^{T^VX}:TX\to T^VX$ the projection map. Put a Riemannian metric $g^{TB}$ on $TB\to B$. Define a metric $g^{TX}$ on $TX\to X$ by
$$g^{TX}=g^{T^VX}\oplus\pi^*g^{TB}.$$
Denote by $\nabla^{TX}$ and $\nabla^{TB}$ the Levi-Civita connections on $TX\to X$ and $TB\to X$ associated to $g^{TX}$ and $g^{TB}$, respectively. Then $\nabla^{T^VX}:=P^{T^VX}\nabla^{TX}$ is a Euclidean connection on $T^VX\to X$ with respect to $g^{T^VX}$.

Define a connection $\wt{\nabla}^{TX}$ on $TX\to X$ by
$$\wt{\nabla}^{TX}=\nabla^{T^VX}\oplus\pi^*\nabla^{TB}.$$
Then $S:=\nabla^{TX}-\wt{\nabla}^{TX}\in\Omega^1(X, \End(TX))$. By \cite[Theorem 1.9]{B86} the $(3, 0)$ tensor $g^{TX}(S(\cdot)\cdot, \cdot)$ depends only on the Riemannian structure $(T^HX, g^{T^VX})$. Let $\set{e_1, \ldots, e_n}$ be a local orthonormal frame for $T^VX\to X$. For any $U\in\Gamma(B, TB)$, denote by $U^H\in\Gamma(X, T^HX)$ its horizontal lift. Define a horizontal one-form $k$ on $X$ by
\begin{equation}\label{eq 3.1.1}
k(U^H)=-\sum_{k=1}^ng^{TX}(S(e_k)e_k, U^H).
\end{equation}
For any two $U, V\in\Gamma(B, TB)$,
\begin{equation}\label{eq 3.1.2}
T(U, V):=-P^{T^VX}[U^H, V^H]
\end{equation}
is a horizontal two-form with values in $T^VX$ and is called the curvature of $\pi:X\to B$.

Denote by $d\vol(Z)$ the Riemannian volume element of the fiber $Z$, which is a section of $\Lambda^n(T^VX)^*\to X$.

Choose and fix a topological spin$^c$ structure on $T^VX\to X$. This fixes a complex line bundle $\lambda\to X$ satisfying $w_2(T^VX)=c_1(\lambda)\mod 2$ \cite[p.397]{LM89}. The spinor bundle $S(T^VX)\to X$ associated to the chosen topological spin$^c$ structure of $T^VX\to X$ is given by
$$S(T^VX)=S_0(T^VX)\otimes\lambda^{\frac{1}{2}},$$
where $S_0(T^VX)$ is the spinor bundle for the locally existing spin structure of $T^VX\to X$ and $\lambda^{\frac{1}{2}}$ is the locally existing square root of $\lambda\to X$. Since $n$ is even, $S(T^VX)\to X$ is $\Z_2$-graded. Recall from \cite[p.918]{FL10} that a differential spin$^c$ structure on $\pi:X\to B$ consists of a topological spin$^c$ structure on $T^VX\to X$, a Hermitian metric $g^\lambda$ and a unitary connection $\nabla^\lambda$ on $\lambda\to X$.

A Riemannian and differential spin$^c$ structure $(T^HX, g^{T^VX}, g^\lambda, \nabla^\lambda)$ on $\pi:X\to B$ induce a Hermitian metric $g^{S(T^VX)}$ and a unitary connection $\nabla^{S(T^VX)}$ on $S(T^VX)\to X$. Define the Todd form of $\nabla^{T^VX}$ by
\begin{equation}\label{eq 3.1.3}
\todd(\nabla^{T^VX})=\wh{A}(\nabla^{T^VX})\wedge e^{\frac{1}{2}c_1(\nabla^\lambda)}.
\end{equation}

Let $(E, g^E, \nabla^E)$ be a triple. Define the twisted spin$^c$ Dirac operator $\DD^{S\otimes E}:\Gamma(X, S(T^VX)\otimes E)\to\Gamma(X, S(T^VX)\otimes E)$ by
\begin{equation}\label{eq 3.1.4}
\DD^{S\otimes E}=\sum_{k=1}^nc(e_k)\nabla_{e_k}^{S(T^VX)\otimes E},
\end{equation}
where $c$ is the Clifford multiplication and $\nabla^{S(T^VX)\otimes E}$ is the tensor product of $\nabla^{S(T^VX)}$ and $\nabla^E$. Note that $\DD^{S\otimes E}$ is odd self-adjoint.

Define an infinite-rank $\Z_2$-graded complex vector bundle $\pi_*E\to B$ whose fiber over $b\in B$ is given by
$$(\pi_*E)_b=\Gamma(Z_b, (S(T^VX)\otimes E)|_{Z_b}).$$
The space of sections of $\pi_*E\to B$ is defined to be
$$\Gamma(B, \pi_*E):=\Gamma(X, S(T^VX)\otimes E).$$
Define an $L^2$-metric on $\pi_*E\to B$ by
\begin{equation}\label{eq 3.1.5}
g^{\pi_*E}(s_1, s_2)(b)=\int_{Z_b}g^{S(T^VX)\otimes E}(s_1, s_2)d\vol(Z).
\end{equation}
Define a connection on $\pi_*E\to B$ by
\begin{equation}\label{eq 3.1.6}
\nabla^{\pi_*E}_Us:=\nabla^{S(T^VX)\otimes E}_{U^H}s,
\end{equation}
where $s\in\Gamma(B, \pi_*E)$ and $U\in\Gamma(B, TB)$. Then the connection on $\pi_*E\to B$ defined by
\begin{equation}\label{eq 3.1.7}
\nabla^{\pi_*E, u}:=\nabla^{\pi_*E}+\frac{1}{2}k,
\end{equation}
where $k$ is given by (\ref{eq 3.1.1}), is $\Z_2$-graded and unitary with respect to $g^{\pi_*E}$.

The Bismut superconnection on $\pi_*E\to B$ is defined to be
\begin{equation}\label{eq 3.1.8}
\bbb^E=\DD^{S\otimes E}+\nabla^{\pi_*E, u}-\frac{c(T)}{4},
\end{equation}
where $T$ is given by (\ref{eq 3.1.2}). The rescaled Bismut superconnection is given by
$$\bbb^E_t=\sqrt{t}\DD^{S\otimes E}+\nabla^{\pi_*E, u}-\frac{c(T)}{4\sqrt{t}}.$$
By \cite[Theorem 10.23]{BGV},
\begin{equation}\label{eq 3.1.9}
\lim_{t\to 0}\ch(\bbb^E_t)=\int_{X/B}\todd(\nabla^{T^VX})\wedge\ch(\nabla^E).
\end{equation}

Mi\v s\v cenko--Fomenko \cite[p.96-97]{MF79} (see also \cite[Lemma 7.13]{FL10}) prove that there exist finite rank subbundles $L^\pm\to B$ and complementary closed subbundles $K^\pm\to B$ of $(\pi_*E)^\pm\to B$ such that
\begin{equation}\label{eq 3.1.10}
(\pi_*E)^+=K^+\oplus L^+,\qquad(\pi_*E)^-=K^-\oplus L^-,
\end{equation}
$\DD^{S\otimes E}_+:(\pi_*E)^+\to(\pi_*E)^-$ is block diagonal as a map with respect to (\ref{eq 3.1.10}) and $\DD^{S\otimes E}_+|_{K^+}:K^+\to K^-$ is an isomorphism.

Given $L^\pm\to B$ satisfying the above conditions, we say the $\Z_2$-graded complex vector bundle $L\to B$, defined by $L=L^+\oplus L^-$, satisfies the MF property for $\DD^{S\otimes E}$. If $L\to B$ satisfies the MF property for $\DD^{S\otimes E}$, then the analytic index of $[E]\in K(X)$ is defined to be
$$\ind^a([E])=[L^+]-[L^-]\in K(B).$$
It is proved in \cite[p.96-97]{MF79} that $\ind^a([E])$ does not depend on the choice of $L\to B$ satisfying the MF property for $\DD^{S\otimes E}$.

Let $g^L$ be the $\Z_2$-graded Hermitian metric on $L\to B$ inherited from $g^{\pi_*E}$. Denote by $P:\pi_*E\to L$ the $\Z_2$-graded projection map with respect to (\ref{eq 3.1.10}). Define a connection on $L\to B$ by
\begin{equation}\label{eq 3.1.11}
\nabla^L:=P\circ\nabla^{\pi_*E, u}\circ P.
\end{equation}
Note that $\nabla^L$ is $\Z_2$-graded and compatible with $g^L$. Henceforth, whenever $(L, g^L, \nabla^L)$ is a $\Z_2$-graded triple and $L\to B$ satisfies the MF property for $\DD^{S\otimes E}$, $g^L$ and $\nabla^L$ are obtained as above unless otherwise specified.

Given an $L\to B$ satisfying the MF property for $\DD^{S\otimes E}$, consider the infinite rank $\Z_2$-graded complex vector bundle $\pi_*E\oplus L^{\op}\to B$. Let $i^-:L^-\to(\pi_*E)^-$ be the inclusion map and $z\in\C$. Define a map $\wt{\DD}^{S\otimes E}_+(z):(\pi_*E\oplus L^{\op})^+\to(\pi_*E\oplus L^{\op})^-$ by
\begin{equation}\label{eq 3.1.12}
\wt{\DD}^{S\otimes E}_+(z)=\begin{pmatrix} \DD^{S\otimes E}_+ & z i^- \\ z P^+ & 0 \end{pmatrix}.
\end{equation}
Note that $\wt{\DD}^{S\otimes E}_+(z)$ is invertible for all $z\neq 0$ \cite[Lemma 7.20]{FL10}. Define a map $\wt{\DD}^{S\otimes E}(z):\pi_*E\oplus L^{\op}\to\pi_*E\oplus L^{\op}$ by
$$\wt{\DD}^{S\otimes E}(z):=\begin{pmatrix} 0 & (\wt{\DD}^{S\otimes E}_+(z))^* \\ \wt{\DD}^{S\otimes E}_+(z) & 0 \end{pmatrix}.$$

Define a Bismut superconnection on $\pi_*E\oplus L^{\op}\to B$ by
\begin{equation}\label{eq 3.1.13}
\wh{\bbb}^E=\wt{\DD}^{S\otimes E}(1)+\nabla^{\pi_*E, u}\oplus\nabla^{L, \op}-\frac{c(T)}{4}.
\end{equation}

Choose and fix $a\in(0, 1)$. Let $\alpha:[0, \infty)\to I$ be a smooth function that satisfies $\alpha(t)=0$ for all $t\leq a$ and $\alpha(t)=1$ for all $t\geq 1$. Define a rescaled Bismut superconnection by
$$\wh{\bbb}^E_t=\sqrt{t}\wt{\DD}^{S\otimes E}(\alpha(t))+\nabla^{\pi_*E, u}\oplus\nabla^{L, \op}-\frac{c(T)}{4\sqrt{t}}.$$
Since $\wt{\DD}^{S\otimes E}(\alpha(t))$ is invertible for $t\geq 1$,
\begin{equation}\label{eq 3.1.14}
\lim_{t\to\infty}\ch(\wh{\bbb}^E_t)=0.
\end{equation}
On the other hand, for $t\leq a$, $\wh{\bbb}^E_t$ decouples, i.e.
$$\wh{\bbb}^E_t=\bigg(\sqrt{t}\DD^{S\otimes E}+\nabla^{\pi_*E, u}-\frac{c(T)}{4\sqrt{t}}\bigg)\oplus\nabla^{L, \op}=\bbb^E_t\oplus\nabla^{L, \op}.$$
By (\ref{eq 3.1.9}),
\begin{equation}\label{eq 3.1.15}
\begin{split}
\lim_{t\to 0}\ch(\wh{\bbb}^E_t)&=\lim_{t\to 0}\ch(\bbb^E_t)+\ch(\nabla^{L, \op})\\
&=\int_{X/B}\todd(\nabla^{T^VX})\wedge\ch(\nabla^E)-\ch(\nabla^L).
\end{split}
\end{equation}
The Bismut--Cheeger eta form associated to $\wh{\bbb}^E_t$ is defined to be
\begin{equation}\label{eq 3.1.16}
\wh{\eta}^E(g^E, \nabla^E, T^HX, g^{T^VX}, g^\lambda, \nabla^\lambda, L)=\frac{1}{\sqrt{\pi}}\int^\infty_0\str\bigg(\frac{d\wh{\bbb}^E_t}{dt}e^{-\frac{1}{2\pi i}(\wh{\bbb}^E_t)^2}\bigg)dt.
\end{equation}
By (\ref{eq 3.1.14}) and (\ref{eq 3.1.15}), the local FIT for $\DD^{S\otimes E}$ is
\begin{equation}\label{eq 3.1.17}
d\wh{\eta}^E(g^E, \nabla^E, T^HX, g^{T^VX}, g^\lambda, \nabla^\lambda, L)=\int_{X/B}\todd(\nabla^{T^VX})\wedge\ch(\nabla^E)-\ch(\nabla^L).
\end{equation}

\subsection{A proof of the extended variational formula for the Bismut--Cheeger eta form}\label{s 3.2}

In this subsection we prove an extended variational formula for the Bismut--Cheeger eta form for spin$^c$ Dirac operators twisted by isomorphic Hermitian bundles without the kernel bundle assumption (Theorem \ref{thm 1.1}).

The following proposition is the spin$^c$ analog of \cite[Proposition 1]{H20}.
\begin{prop}\label{prop 3.1}
Let $\pi:X\to B$ be a submersion with closed, oriented and spin$^c$ fibers of even dimension, equipped with two sets of Riemannian and differential spin$^c$ structures
$$(T^H_0X, g^{T^VX}_0, g^\lambda_0, \nabla^\lambda_0),\qquad(T^H_1X, g^{T^VX}_1, g^\lambda_1, \nabla^\lambda_1),$$
where the underlying topological spin$^c$ structures coincide. Let $E\to X$ be a complex vector bundle, $(g^E_0, \nabla^E_0)$ and $(g^E_1, \nabla^E_1)$ are two pairs of Hermitian metrics and unitary connections on $E\to X$. For $j\in\set{0, 1}$, write $\DD^{S\otimes E}_j$ for the twisted spin$^c$ Dirac operator defined in terms of
$$(g^E_j, \nabla^E_j, T^H_jX, g^{T^VX}_j, g^\lambda_j, \nabla^\lambda_j).$$
If $(L_j, g^{L_j}, \nabla^{L_j})$ is a $\Z_2$-graded triple so that $L_j\to B$ satisfies the MF property for $\DD^{S\otimes E}_j$, then there exist balanced $\Z_2$-graded triples $(W_0, g^{W_0}, \nabla^{W_0})$ and $(W_1, g^{W_1}, \nabla^{W_1})$ and a $\Z_2$-graded isometric isomorphism
\begin{equation}\label{eq 3.2.1}
h:(L_0\oplus W_0, g^{L_0}\oplus g^{W_0})\to (L_1\oplus W_1, g^{L_1}\oplus g^{W_1})
\end{equation}
such that
\begin{equation}\label{eq 3.2.2}
\begin{split}
&\wh{\eta}^E(g^E_1, \nabla^E_1, T^H_1X, g^{T^VX}_1, g^\lambda_1, \nabla^\lambda_1, L_1)-\wh{\eta}^E(g^E_0, \nabla^E_0, T^H_0X, g^{T^VX}_0, g^\lambda_0, \nabla^\lambda_0, L_0)\\
&\equiv\int_{X/B}\big(T\wh{A}(\nabla^{T^VX}_0, \nabla^{T^VX}_1)\wedge e^{\frac{1}{2}c_1(\nabla^\lambda_0)}+\wh{A}(\nabla^{T^VX}_1)\wedge
e^{\frac{1}{2}Tc_1(\nabla^\lambda_0, \nabla^\lambda_1)}\big)\wedge\ch(\nabla^E_0)\\
&\quad+\int_{X/B}\todd(\nabla^{T^VX}_1)\wedge\CS(\nabla^E_0, \nabla^E_1)-\CS(\nabla^{L_0}\oplus\nabla^{W_0}, h^*(\nabla^{L_1}\oplus\nabla^{W_1})).
\end{split}
\end{equation}
\end{prop}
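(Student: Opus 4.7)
The plan is a mapping-cylinder argument: lift both sets of geometric data to a single family over $\wt{B} = B \times I$ interpolating between them at $t = 0$ and $t = 1$, apply the local FIT (\ref{eq 3.1.17}) on this enlarged family, and integrate along the $I$-direction using Stokes' theorem for integration along the fibers (\ref{eq 2.1.1}).

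Concretely, set $\wt{X} = X \times I$ and $\wt{B} = B \times I$, with product submersion $\wt{\pi} = \pi \times \id_I$, and interpolate via paths of the form (\ref{eq 2.2.1})--(\ref{eq 2.2.2}) to produce a Riemannian and differential spin$^c$ structure $(T^H\wt{X}, g^{T^V\wt{X}}, g^{\wt{\lambda}}, \nabla^{\wt{\lambda}})$ on $\wt{\pi}$ whose restriction at $t = j$ is $(T^H_jX, g^{T^VX}_j, g^\lambda_j, \nabla^\lambda_j)$, together with a Hermitian metric $g^{\wt{E}}$ and unitary connection $\nabla^{\wt{E}}$ on $\wt{E} := p_X^* E$ restricting at $t = j$ to $(g^E_j, \nabla^E_j)$. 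Applying Mi\v s\v cenko--Fomenko (\ref{eq 3.1.10}) to the resulting operator $\wt{\DD}^{S \otimes \wt{E}}$ produces a $\Z_2$-graded bundle $\wt{L} \to \wt{B}$ with inherited Hermitian metric and projected unitary connection (\ref{eq 3.1.11}), for which (\ref{eq 3.1.17}) reads
\begin{equation*}
d^{\wt{B}}\, \wh{\eta}^{\wt{E}} = \int_{\wt{X}/\wt{B}} \todd(\nabla^{T^V\wt{X}}) \wedge \ch(\nabla^{\wt{E}}) - \ch(\nabla^{\wt{L}}).
\end{equation*}

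Next, integrating this identity along the one-dimensional fibers of $p_B : \wt{B} \to B$ and invoking (\ref{eq 2.1.1}) with $n = 1$, the left hand side becomes $\wh{\eta}^{\wt{E}}|_{t=1} - \wh{\eta}^{\wt{E}}|_{t=0}$ modulo $\im(d^B)$. On the right, $-\int_{\wt{B}/B} \ch(\nabla^{\wt{L}})$ is, up to sign, the Chern--Simons form $\CS(\nabla^{\wt{L}}|_{t=0}, \nabla^{\wt{L}}|_{t=1})$, while $\int_{\wt{B}/B}\int_{\wt{X}/\wt{B}} \todd(\nabla^{T^V\wt{X}}) \wedge \ch(\nabla^{\wt{E}})$ can be rewritten as $\int_{X/B}\int_I$ of the same integrand; expanding $\todd = \wh{A} \wedge e^{\frac{1}{2} c_1}$ via (\ref{eq 3.1.3}) and applying the definitions of $T\wh{A}$, $Tc_1$, and $\CS$ yields exactly the transgression terms $\int_{X/B}\bigl(T\wh{A}(\nabla^{T^VX}_0, \nabla^{T^VX}_1) \wedge e^{\frac{1}{2} c_1(\nabla^\lambda_0)} + \wh{A}(\nabla^{T^VX}_1) \wedge e^{\frac{1}{2} Tc_1(\nabla^\lambda_0, \nabla^\lambda_1)}\bigr) \wedge \ch(\nabla^E_0)$ and $\int_{X/B} \todd(\nabla^{T^VX}_1) \wedge \CS(\nabla^E_0, \nabla^E_1)$ on the right hand side of (\ref{eq 3.2.2}).

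It remains to reconcile $\wt{L}|_{B \times \{j\}}$ with the preassigned $L_j$. Since both are $\Z_2$-graded bundles satisfying the MF property for $\DD^{S\otimes E}_j$, a standard stabilization argument produces balanced $\Z_2$-graded triples $(W_j, g^{W_j}, \nabla^{W_j})$ and a $\Z_2$-graded isometric isomorphism $h : (L_0 \oplus W_0, g^{L_0} \oplus g^{W_0}) \to (L_1 \oplus W_1, g^{L_1} \oplus g^{W_1})$ obtained by composing the stabilized identifications $\wt{L}|_{t=j} \cong L_j \oplus W_j$. Combined with the standard behavior of the eta form under passage from an MF bundle to a stabilization by a balanced summand, this replaces $\wh{\eta}^{\wt{E}}|_{t=j}$ by $\wh{\eta}^E(g^E_j, \nabla^E_j, T^H_jX, g^{T^VX}_j, g^\lambda_j, \nabla^\lambda_j, L_j)$ modulo $\im(d)$, and properties (\ref{eq 2.2.4})--(\ref{eq 2.2.6}) of Chern--Simons forms rewrite $\CS(\nabla^{\wt{L}}|_{t=0}, \nabla^{\wt{L}}|_{t=1})$ as $\CS(\nabla^{L_0} \oplus \nabla^{W_0}, h^*(\nabla^{L_1} \oplus \nabla^{W_1}))$. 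The main obstacle is precisely this reconciliation step: the interior transgression calculation is routine, but careful bookkeeping of the MF-bundle identifications at $t = 0, 1$, the balanced summands $W_j$, the signs coming from Stokes, and the $\im(d^B)$-ambiguities among the various Chern--Simons forms is the delicate part of the argument.
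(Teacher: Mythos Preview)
Your proposal is correct and follows essentially the same mapping-cylinder/Stokes strategy as the paper. The only packaging difference is in the reconciliation step: rather than correcting the restricted eta forms $\wh{\eta}^{\wt{E}}|_{t=j}$ to $\wh{\eta}^E(\ldots,L_j)$ after the fact via \cite[Corollary~1]{H20} and then combining the resulting Chern--Simons terms, the paper builds the correction directly into the superconnection by adjoining an auxiliary balanced summand $\HH=p_B^*H$ to $\LLL$ with a connection chosen so that its restrictions are $f_j^*(\nabla^{L_j}\oplus\nabla^{W_j})$, and then invokes \cite[Lemma~1]{H20} to identify $i_{B,j}^*\wh{\eta}^{\e;\HH}$ with $\wh{\eta}^E(\ldots,L_j)$ in one step; the paper also uses a specific piecewise path $\wt{c}(t)$ (varying $(T^HX,g^{T^VX})$, then $(g^\lambda,\nabla^\lambda)$, then $(g^E,\nabla^E)$ separately) to obtain the transgression terms in exactly the displayed form.
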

\begin{proof}
Since the space of splitting maps is affine, there exists a smooth path of horizontal distributions $\set{T^H_tX\to X}_{t\in I}$ joining $T^H_0X\to X$ and $T^H_1X\to X$. Let $(g^E_t, \nabla^E_t)$ be the smooth path of Hermitian metrics and unitary connections on $E\to X$ joining $(g^E_0, \nabla^E_0)$ and $(g^E_1, \nabla^E_1)$ defined by (\ref{eq 2.2.1}). Define a smooth path $g^{T^VX}_t$ of Euclidean metric joining $g^{T^VX}_0$ and $g^{T^VX}_1$ in a similar way. Then
$$c(t)=(g^E_t, \nabla^E_t, T^H_tX, g^{T^VX}_t, g^\lambda_t, \nabla^\lambda_t),$$
where $t\in I$, is a smooth path joining $c(0)=(g^E_0, \nabla^E_0, T^H_0X, g^{T^VX}_0, g^\lambda_0, \nabla^\lambda_0)$ and $c(1)=(g^E_1, \nabla^E_1, T^H_1X, g^{T^VX}_1, g^\lambda_1, \nabla^\lambda_1)$. Define a new path $\wt{c}(t)$ by
$$\wt{c}(t)=\left\{
              \begin{array}{ll}
                (g^E_0, \nabla^E_0, T^H_{3t}X, g^{T^VX}_{3t}, g^\lambda_0, \nabla^\lambda_0), & \displaystyle\textrm{ for } t\in\bigg[0, \frac{1}{3}\bigg]\\\\
                (g^E_0, \nabla^E_0, T^H_1X, g^{T^VX}_1, g^\lambda_{3t-1}, \nabla^\lambda_{3t-1}), & \displaystyle\textrm{ for } t\in\bigg[\frac{1}{3}, \frac{2}{3}\bigg]\\\\
                (g^E_{3t-2}, \nabla^E_{3t-2}, T^H_1X, g^{T^VX}_1, g^\lambda_1, \nabla^\lambda_1), & \displaystyle\textrm{ for } t\in\bigg[\frac{2}{3}, 1\bigg]
              \end{array}
            \right..$$

Consider the submersion $\wt{\pi}:\wt{X}\to\wt{B}$, where $\wt{\pi}=\pi\times\id_I$. Define complex vector bundles $\e\to\wt{X}$ and $\wt{\lambda}\to\wt{X}$ by $\e=p_X^*E$ and $\wt{\lambda}=p_X^*\lambda$, respectively. Define the pair of Hermitian metric and unitary connection $(g^\e, \nabla^\e)$ on $\e\to\wt{X}$ and the Riemannian and differential spin$^c$ structure $(T^H\wt{X}, g^{T^V\wt{X}}, g^{\wt{\lambda}}, \nabla^{\wt{\lambda}})$ on $\wt{\pi}:\wt{X}\to\wt{B}$ so that for each $t\in I$, the restriction of
\begin{equation}\label{eq 3.2.3}
(g^\e, \nabla^\e, T^H\wt{X}, g^{T^V\wt{X}}, g^{\wt{\lambda}}, \nabla^{\wt{\lambda}})
\end{equation}
to $X\times\set{t}$ is given by $\wt{c}(t)$. Denote by $\DD^{S\otimes\e}$ the twisted spin$^c$ Dirac operator defined in terms of (\ref{eq 3.2.3}). Let $(\LLL, g^\LLL, \nabla^\LLL)$ be a $\Z_2$-graded triple so that $\LLL\to\wt{B}$ satisfies the MF property for $\DD^{S\otimes\e}$, i.e. there exists a $\Z_2$-graded complementary closed subbundle $\K\to\wt{B}$ of $\wt{\pi}_*\e\to\wt{B}$ such that
\begin{equation}\label{eq 3.2.4}
\wt{\pi}_*\e=\K\oplus\LLL,
\end{equation}
$\DD^{S\otimes\e}_+:(\wt{\pi}_*\e)^+\to(\wt{\pi}_*\e)^-$ is block diagonal as a map with respect to (\ref{eq 3.2.4}), and $\DD^{S\otimes\e}_+|_{\K^+}:\K^+\to\K^-$ is a smooth bundle isomorphism. Note that
\begin{equation}\label{eq 3.2.5}
i_{B, 0}^*(\wt{\pi}_*\e)^\pm\cong i_{B, 1}^*(\wt{\pi}_*\e)^\pm=(\pi_*E)^\pm
\end{equation}
and there exist smooth bundle isomorphisms
\begin{equation}\label{eq 3.2.6}
i_{B, 0}^*\LLL^\pm\cong i_{B, 1}^*\LLL^\pm,\qquad i_{B, 0}^*\K^\pm\cong i_{B, 1}^*\K^\pm.
\end{equation}
Let $j\in\set{0, 1}$. Write $\wt{L}_j^\pm\to B$ for $i_{B, j}^*\LLL^\pm\to B$ and $K^\pm_j\to B$ for $i_{B, j}^*\K^\pm\to B$. Moreover, write $g^{\wt{L}_j}$ for $i_{B, j}^*g^\LLL$. Define a $\Z_2$-graded complex vector bundle $\wt{L}_j\to B$ by $\wt{L}_j=\wt{L}_j^+\oplus\wt{L}_j^-$. By (\ref{eq 3.2.6}), we choose and fix a $\Z_2$-graded isometric isomorphism
\begin{equation}\label{eq 3.2.7}
f:(\wt{L}_0, g^{\wt{L}_0})\to(\wt{L}_1, g^{\wt{L}_1}).
\end{equation}

By (\ref{eq 3.2.4}) and (\ref{eq 3.2.5}) we have
\begin{equation}\label{eq 3.2.8}
(\pi_*E)^+=K^+_j\oplus L^+_j,\qquad(\pi_*E)^-=K^-_j\oplus L^-_j.
\end{equation}
Since
$$\DD^{S\otimes\e}|_{i_{B, j}^*\wt{\pi}_*\e}=\DD^{S\otimes E}_j,$$
it follows that $\DD^{S\otimes E}_{j, +}:(\pi_*E)^+\to(\pi_*E)^-$ is block diagonal as a map with respect to (\ref{eq 3.2.8}) and the restriction $\DD^{S\otimes E}_{j, +}|_{K_j^+}:K^+_j\to K^-_j$ is an isomorphism. Thus $\wt{L}_j\to B$ satisfies the MF property for $\DD^{S\otimes E}_j$. Since $L_j\to B$ satisfies the MF property for $\DD^{S\otimes E}_j$ by assumption, there exist balanced $\Z_2$-graded triples $(H_j, g^{H_j}, \nabla^{H_j})$ and $(W_j, g^{W_j}, \nabla^{W_j})$ and a $\Z_2$-graded isometric isomorphism
\begin{equation}\label{eq 3.2.9}
f_j:(\wt{L}_j\oplus H_j, g^{\wt{L}_j}\oplus g^{H_j})\to(L_j\oplus W_j, g^{L_j}\oplus g^{W_j}).
\end{equation}
By (\ref{eq 3.2.7}) and (\ref{eq 3.2.9}), we have the following $\Z_2$-graded isometric isomorphisms:
\begin{equation}\label{eq 3.2.10}
\begin{tikzcd}
(L_0\oplus W_0\oplus H_1, g^{L_0}\oplus g^{W_0}\oplus g^{H_1}) \arrow{d}{f_0^{-1}\oplus\id_{H_1}} \\ (\wt{L}_0\oplus H_0\oplus H_1, g^{\wt{L}_0}\oplus g^{H_0}\oplus g^{H_1}) \arrow{d}{f\oplus\id_{H_0}\oplus\id_{H_1}} \\ (\wt{L}_1\oplus H_0\oplus H_1, g^{\wt{L}_1}\oplus g^{H_0}\oplus g^{H_1}) \arrow{d}{f_1\oplus\id_{H_0}} \\ (L_1\oplus W_1\oplus H_0, g^{L_1}\oplus g^{W_1}\oplus g^{H_0})
\end{tikzcd}
\end{equation}
Write $W_0\to B$ for $W_0\oplus H_1\to B$, $H\to B$ for $H_0\oplus H_1\to B$, $W_1\to B$ for $W_1\oplus H_0\to B$ and similarly for the corresponding $\Z_2$-graded Hermitian metrics and $\Z_2$-graded unitary connections. Moreover, write $f_0$ for $f_0\oplus\id_{H_1}$ and $f_1$ for $f_1\oplus\id_{H_0}$. Then the following diagram commutes.
\begin{center}
\begin{tikzcd}
(\wt{L}_0\oplus H, g^{\wt{L}_0}\oplus g^H) \arrow{r}{f\oplus\id_H} \arrow{d}[swap]{f_0} & (\wt{L}_1\oplus H, g^{\wt{L}_1}\oplus g^H) \arrow{d}{f_1} \\ (L_0\oplus W_0, g^{L_0}\oplus g^{W_0}) \arrow{r}[swap]{h} & (L_1\oplus W_1, g^{L_1}\oplus g^{W_1}),
\end{tikzcd}
\end{center}
where $h:=f_1\circ(f\oplus\id_H)\circ f_0^{-1}$. Furthermore, by writing $L\to B$ for $\wt{L}_0\to B$ and $f_1$ for $f_1\circ(f\oplus\id_H)$, the above diagram becomes
\begin{equation}\label{eq 3.2.11}
\begin{tikzcd}
 & (L\oplus H, g^L\oplus g^H) \arrow{dl}[swap]{f_0} \arrow{dr}{f_1} & \\ (L_0\oplus W_0, g^{L_0}\oplus g^{W_0}) \arrow{rr}[swap]{h} & & (L_1\oplus W_1, g^{L_1}\oplus g^{W_1})
\end{tikzcd}
\end{equation}
and $h$ becomes $h=f_1\circ f_0^{-1}$. Thus (\ref{eq 3.2.1}) holds.

The connection on $L\oplus H\to B$ defined by
\begin{equation}\label{eq 3.2.12}
\nabla^{L\oplus H}_j:=f_j^*(\nabla^{L_j}\oplus\nabla^{W_j})
\end{equation}
is $\Z_2$-graded and unitary. Define a complex vector bundle $\HH\to\wt{B}$ by $\HH=p_B^*H$. Let $(g^{\LLL\oplus\HH}, \nabla^{\LLL\oplus\HH})$ be a pair of Hermitian metric and unitary connection on $\LLL\oplus\HH\to\wt{B}$ defined by (\ref{eq 2.2.2}), which satisfies
$$i_{B, j}^*\nabla^{\LLL\oplus\HH}=\nabla^{L\oplus H}_j.$$
Define a rescaled Bismut superconnection $\wh{\bbb}^{\e; \HH}_t$ on $\wt{\pi}_*\e\oplus(\LLL\oplus\HH)^{\op}\to\wt{B}$ by
$$\wh{\bbb}^{\e; \HH}_t=\sqrt{t}(\wt{\DD}^{S\otimes\e}(\alpha(t))\oplus\alpha(t)s_\HH)+(\nabla^{\wt{\pi}_*\e, u}\oplus\nabla^{\LLL\oplus\HH, \op})-\frac{1}{\sqrt{t}}\bigg(\frac{c(\wt{T})}{4}\oplus 0\bigg),$$
where \dis{s_\HH=\begin{pmatrix} 0 & \id \\ \id & 0 \end{pmatrix}\in\Gamma(\wt{B}, \End(\HH)^-)} and $\wt{T}$ is the curvature 2-form of $\wt{\pi}:\wt{X}\to\wt{B}$. By the definition of $\alpha$, $\wh{\bbb}^{\e; \HH}_t$ decouples for $t\leq a$, i.e.
$$\wh{\bbb}^{\e; \HH}_t=\bbb^\e_t\oplus\nabla^{\LLL\oplus\HH, \op}.$$
By (\ref{eq 3.1.9}) we have
\begin{equation}\label{eq 3.2.13}
\begin{split}
\lim_{t\to 0}\ch(\wh{\bbb}^{\e; \HH}_t)&=\lim_{t\to 0}\ch(\bbb^\e_t)-\ch(\nabla^{\LLL\oplus\HH})\\
&=\int_{\wt{X}/\wt{B}}\todd(\nabla^{T^V\wt{X}})\wedge\ch(\nabla^\e)-\ch(\nabla^{\LLL\oplus\HH}).
\end{split}
\end{equation}
On the other hand, since $(\wh{\bbb}^{\e; \HH}_t)_{[0]}$ is invertible for every $t\geq 1$, it follows that
\begin{equation}\label{eq 3.2.14}
\lim_{t\to\infty}\ch(\wh{\bbb}^{\e; \HH}_t)=0.
\end{equation}
Write $\wh{\eta}^{\e; \HH}(g^\e, \nabla^\e, T^H\wt{X}, g^{T^V\wt{X}}, g^{\wt{\lambda}}, \nabla^{\wt{\lambda}}, \LLL)$ for the Bismut--Cheeger eta form associated to $\wh{\bbb}^{\e; \HH}_t$. We temporarily suppress the data defining the Bismut--Cheeger eta form to shorten the notation. By (\ref{eq 3.2.13}) and (\ref{eq 3.2.14}) we have
\begin{equation}\label{eq 3.2.15}
d\wh{\eta}^{\e; \HH}=\int_{\wt{X}/\wt{B}}\todd(\nabla^{T^V\wt{X}})\wedge\ch(\nabla^\e)-\ch(\nabla^{\LLL\oplus\HH}).
\end{equation}
Denote by $i:\partial\wt{B}\to\wt{B}$ the inclusion map. By (\ref{eq 2.1.1}) we have
\begin{equation}\label{eq 3.2.16}
-(i_{B, 1}^*\wh{\eta}^{\e; \HH}-i_{B, 0}^*\wh{\eta}^{\e; \HH})=-\int_{\partial\wt{B}/B}i^*\wh{\eta}^{\e; \HH}=\int_{\wt{B}/B}d^{\wt{B}}\wh{\eta}^{\e; \HH}-d^B\int_{\wt{B}/B}\wh{\eta}^{\e; \HH}.
\end{equation}
By (\ref{eq 3.2.15}), (\ref{eq 3.2.16}) becomes
\begin{equation}\label{eq 3.2.17}
\begin{split}
i_{B, 1}^*\wh{\eta}^{\e; \HH}-i_{B, 0}^*\wh{\eta}^{\e; \HH}&\equiv-\int_{\wt{B}/B}d^{\wt{B}}\wh{\eta}^{\e; \HH}\\
&\equiv-\int_{\wt{B}/B}\bigg(\int_{\wt{X}/\wt{B}}\todd(\nabla^{T^V\wt{X}})\wedge\ch(\nabla^\e)-\ch(\nabla^{\LLL\oplus\HH})\bigg)\\
&\equiv-\int_{\wt{B}/B}\int_{\wt{X}/\wt{B}}\todd(\nabla^{T^V\wt{X}})\wedge\ch(\nabla^\e)-\CS(\nabla^{L\oplus H}_0, \nabla^{L\oplus H}_1).
\end{split}
\end{equation}
Note that by (\ref{eq 3.2.12}) and the definition of $h$, we have
\begin{equation}\label{eq 3.2.18}
\begin{split}
\CS(\nabla^{L\oplus H}_0, \nabla^{L\oplus H}_1)&=\CS\big(f_0^*(\nabla^{L_0}\oplus\nabla^{W_0}), f_1^*(\nabla^{L_1}\oplus\nabla^{W_1})\big)\\
&\equiv\CS\big(\nabla^{L_0}\oplus\nabla^{W_0}, (f_0^{-1})^*f_1^*(\nabla^{L_1}\oplus\nabla^{W_1})\big)\\
&=\CS\big(\nabla^{L_0}\oplus\nabla^{W_0}, h^*(\nabla^{L_1}\oplus\nabla^{W_1})\big).
\end{split}
\end{equation}
On the other hand, since \dis{\int_{X/B}\circ\int_{\wt{X}/X}=\int_{\wt{X}/B}=\int_{\wt{B}/B}\circ\int_{\wt{X}/\wt{B}}}, it follows from (\ref{eq 3.2.3}) and the definition of $\wt{c}(t)$ that
\begin{equation}\label{eq 3.2.19}
\begin{split}
&-\int_{\wt{B}/B}\int_{\wt{X}/\wt{B}}\todd(\nabla^{T^V\wt{X}})\wedge\ch(\nabla^\e)\\
&=-\int_{X/B}\int_{\wt{X}/X}\todd(\nabla^{T^V\wt{X}})\wedge\ch(\nabla^\e)\\
&\equiv\int_{X/B}\big(T\wh{A}(\nabla^{T^VX}_0, \nabla^{T^VX}_1)\wedge e^{\frac{1}{2}c_1(\nabla^\lambda_0)}+\wh{A}(\nabla^{T^VX}_1)\wedge
e^{\frac{1}{2}Tc_1(\nabla^\lambda_0, \nabla^\lambda_1)}\big)\wedge\ch(\nabla^E_0)\\
&\quad+\int_{X/B}\todd(\nabla^{T^VX}_1)\wedge\CS(\nabla^E_0, \nabla^E_1).
\end{split}
\end{equation}
By (\ref{eq 3.2.18}) and (\ref{eq 3.2.19}), (\ref{eq 3.2.17}) becomes
\begin{equation}\label{eq 3.2.20}
\begin{split}
&i_{B, 1}^*\wh{\eta}^{\e; \HH}-i_{B, 0}^*\wh{\eta}^{\e; \HH}\\
&\equiv\int_{X/B}\big(T\wh{A}(\nabla^{T^VX}_0, \nabla^{T^VX}_1)\wedge e^{\frac{1}{2}c_1(\nabla^\lambda_0)}+\wh{A}(\nabla^{T^VX}_1)\wedge
e^{\frac{1}{2}Tc_1(\nabla^\lambda_0, \nabla^\lambda_1)}\big)\wedge\ch(\nabla^E_0)\\
&\quad+\int_{X/B}\todd(\nabla^{T^VX}_1)\wedge\CS(\nabla^E_0, \nabla^E_1)-\CS\big(\nabla^{L_0}\oplus\nabla^{W_0}, h^*(\nabla^{L_1}\oplus\nabla^{W_1})\big).
\end{split}
\end{equation}
By (\ref{eq 3.2.20}), to prove (\ref{eq 3.2.2}) it remains to show that
\begin{equation}\label{eq 3.2.21}
i_{B, j}^*\wh{\eta}^{\e; \HH}(g^\e, \nabla^\e, T^H\wt{X}, g^{T^V\wt{X}}, g^{\wt{\lambda}}, \nabla^{\wt{\lambda}}, \LLL)\equiv\wh{\eta}^E(g^E_j, \nabla^E_j, T^H_jX, g^{T^VX}_j, g^\lambda_j, \nabla^\lambda_j, L_j)
\end{equation}
for $j\in\set{0, 1}$. First note that $i_{B, j}^*\wh{\bbb}^{\e; \HH}_t=\wh{\bbb}^{E; H}_{j, t}$, where $\wh{\bbb}^{E; H}_{j, t}$ is the rescaled Bismut superconnection on $\pi_*E\oplus(L\oplus H)^{\op}\to B$ given by
\begin{equation}\label{eq 3.2.22}
\wh{\bbb}^{E; H}_{j, t}=\sqrt{t}\big(\wt{\DD}^{S\otimes E}_j(\alpha(t))\oplus\alpha(t)s_H\big)+\big(\nabla^{\pi_*E, u}\oplus\nabla^{L\oplus H, \op}_j\big)-\frac{1}{\sqrt{t}}\bigg(\frac{c(T)}{4}\oplus 0\bigg).
\end{equation}
Here, \dis{s_H:=\begin{pmatrix} 0 & \id \\ \id & 0 \end{pmatrix}\in\Gamma(B, \End(H)^-)}. By (\ref{eq 3.2.12}) and the fact that the $\Z_2$-graded triple $(W_j, g^{W_j}, \nabla^{W_j})$ is balanced, we have
\begin{equation}\label{eq 3.2.23}
\nabla^{L\oplus H, \op}_j=(f_j^{\op})^*(\nabla^{L_j, \op}\oplus\nabla^{W_j}).
\end{equation}
Consider the split quadruple $(W_j, g^{W_j}, \nabla^{W_j}, s_{W_j})$ given in \cite[Example 1]{H20}. Denote by $\aaa_t^{W_j}$ the rescaled superconnection on $W_j\to B$ given by \cite[(3.2)]{H20}. Since $\id_{\pi_*E}\oplus(f_j^{-1})^{\op}:\pi_*E\oplus(L_j\oplus W_j)^{\op}\to\pi_*E\oplus(L\oplus H)^{\op}$ is a $\Z_2$-graded isometric isomorphism, it follows from (\ref{eq 3.2.22}) and (\ref{eq 3.2.23}) that
\begin{displaymath}
\begin{split}
&(\id_{\pi_*E}\oplus(f_j^{-1})^{\op})^*\wh{\bbb}^{E; H}_{j, t}\\
&=\sqrt{t}\big(\wt{\DD}^{S\otimes E}_j(\alpha(t))\oplus\alpha(t)s_{W_j}\big)+\big(\nabla^{\pi_*E, u}\oplus(\nabla^{L_j, \op}\oplus\nabla^{W_j})\big)-\frac{1}{\sqrt{t}}\bigg(\frac{c(T)}{4}\oplus 0\bigg)\\
&=\wh{\bbb}^E_{j, t}\oplus\aaa^{W_j}_t.
\end{split}
\end{displaymath}
Since $\id_{\pi_*E}\oplus(f_j^{-1})^{\op}$ covers $\id_B$, for any $t<T\in(0, \infty)$ we have
\begin{displaymath}
\begin{split}
\CS(i_{B, j}^*\wh{\bbb}^{\e; \HH}_T, i_{B, j}^*\wh{\bbb}^{\e; \HH}_t)&\equiv\CS(\wh{\bbb}^{E; H}_{j, T}, \wh{\bbb}^{E; H}_{j, t})\\
&\equiv\CS\big((\id_{\pi_*E}\oplus(f_j^{-1})^{\op})^*\wh{\bbb}^{E; H}_{j, T}, (\id_{\pi_*E}\oplus(f_j^{-1})^{\op})^*\wh{\bbb}^{E; H}_{j, t}\big)\\
&\equiv\CS(\wh{\bbb}^E_{j, T}\oplus\aaa^{W_j}_T, \wh{\bbb}^E_{j, t}\oplus\aaa^{W_j}_t).
\end{split}
\end{displaymath}
By letting $t\to 0$ and $T\to\infty$ in above, it follows from \cite[Lemma 1]{H20} that
\begin{displaymath}
\begin{split}
i_{B, j}^*\wh{\eta}^{\e; \HH}(g^\e, \nabla^\e, T^H\wt{X}, g^{T^V\wt{X}}, g^{\wt{\lambda}}, \nabla^{\wt{\lambda}}, \LLL)&\equiv\wh{\eta}^{E; W_j}(g^E_j, \nabla^E_j, T^H_jX, g^{T^VX}_j, g^\lambda_j, \nabla^\lambda_j, L_j)\\
&\equiv\wh{\eta}^E(g^E_j, \nabla^E_j, T^H_jX, g^{T^VX}_j, g^\lambda_j, \nabla^\lambda_j, L_j).
\end{split}
\end{displaymath}
Thus (\ref{eq 3.2.2}) holds.
\end{proof}
\begin{remark}\label{remark 3.1}
In Proposition \ref{prop 3.1}, suppose there exists a $\Z_2$-graded isometric isomorphism $\beta:(L_0, g^{L_0})\to(L_1, g^{L_1})$. By taking $h=\beta\oplus\id_W$ and $f_1=h\circ f_0$, diagram (\ref{eq 3.2.11}) still commutes, and (\ref{eq 3.2.18}) becomes
\begin{displaymath}
\begin{split}
\CS\big(f_0^*(\nabla^{L_0}\oplus\nabla^W), f_1^*(\nabla^{L_1}\oplus\nabla^W)\big)&=\CS\big(f_0^*(\nabla^{L_0}\oplus\nabla^W), f_0^*(h^*(\nabla^{L_1}\oplus\nabla^W))\big)\\
&=\CS\big(\nabla^{L_0}\oplus\nabla^W, h^*(\nabla^{L_1}\oplus\nabla^W)\big)\\
&=\CS(\nabla^{L_0}\oplus\nabla^W, \beta^*\nabla^{L_1}\oplus\id_W^*\nabla^W)\\
&\equiv\CS(\nabla^{L_0}, \beta^*\nabla^{L_1}),
\end{split}
\end{displaymath}
where the last equality follows from (\ref{eq 2.2.6}) and the fact that $\CS(\nabla^W, \id_W^*\nabla^W)\equiv 0$.

Thus if $L_0\cong L_1$ as $\Z_2$-graded complex vector bundles in Proposition \ref{prop 3.1}, without loss of generality we can take $W_0\to B$ and $W_1\to B$ to be the zero bundle.
\end{remark}
\begin{prop}\label{prop 3.2}
Let $\pi:X\to B$ be a submersion with closed, oriented and spin$^c$ fibers of even dimension, equipped with a Riemannian and differential spin$^c$ structure $(T^HX, g^{T^VX}, g^\lambda, \nabla^\lambda)$. Let $(E, g^E)$ and $(F, g^F)$ be Hermitian bundles and $\nabla^F$ a unitary connection on $F\to X$. If there exists an isometric isomorphism $\alpha:(E, g^E)\to(F, g^F)$ and $(L_{E, \alpha}, g^{L_{E, \alpha}}, \nabla^{L_{E, \alpha}})$ is a $\Z_2$-graded triple so that $L_{E, \alpha}\to B$ satisfies the MF property for $\DD^{S\otimes E, \alpha}$, which is defined in terms of $\alpha^*\nabla^F$, then there exist a unique $\Z_2$-graded triple $(L_{F, \alpha}, g^{L_{F, \alpha}}, \nabla^{L_{F, \alpha}})$ and a unique $\Z_2$-graded isometric isomorphism $\wt{\alpha}^L:(L_{E, \alpha}, g^{L_{E, \alpha}})\to(L_{F, \alpha}, g^{L_{F, \alpha}})$ such that $L_{F, \alpha}\to B$ satisfies the MF property for $\DD^{S\otimes F}$, which is defined in terms of $\nabla^F$, and
\begin{equation}\label{eq 3.2.24}
\wh{\eta}^E(g^E, \alpha^*\nabla^F, T^HX, g^{T^VX}, g^\lambda, \nabla^\lambda, L_{E, \alpha})=\wh{\eta}^F(g^F, \nabla^F, T^HX, g^{T^VX}, g^\lambda, \nabla^\lambda, L_{F, \alpha}).
\end{equation}
\end{prop}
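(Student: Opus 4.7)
The strategy is to show that, via the isometric isomorphism $\alpha:(E,g^E)\to(F,g^F)$, the entire Bismut superconnection setup for $(E,g^E,\alpha^*\nabla^F)$ is carried verbatim onto that for $(F,g^F,\nabla^F)$. Concretely, the plan is to lift $\alpha$ to the twisted spinor bundle, push it forward along $\pi$, transport the MF decomposition through it, and then observe that conjugation by an isometry leaves the supertraces in (\ref{eq 3.1.16}) invariant.

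First, extend $\alpha$ to a bundle isometric isomorphism $\wt{\alpha}:=\id_{S(T^VX)}\otimes\alpha:S(T^VX)\otimes E\to S(T^VX)\otimes F$. The defining relation (\ref{eq 2.1.2}), i.e.\ $\alpha^*\nabla^F=\alpha^{-1}\circ\nabla^F\circ\alpha$, immediately gives that $\wt{\alpha}$ intertwines the tensor-product connections $\nabla^{S(T^VX)}\otimes\alpha^*\nabla^F$ and $\nabla^{S(T^VX)}\otimes\nabla^F$; combined with (\ref{eq 3.1.4}), this intertwines $\DD^{S\otimes E,\alpha}$ with $\DD^{S\otimes F}$. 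The fiberwise isometry $\wt{\alpha}$ then induces a $\Z_2$-graded isometric bundle isomorphism $\wt{\alpha}_*:\pi_*E\to\pi_*F$ of the infinite-rank bundles over $B$. Since the $L^2$ metric (\ref{eq 3.1.5}), the connection (\ref{eq 3.1.6})--(\ref{eq 3.1.7}), the one-form $k$ in (\ref{eq 3.1.1}) and the curvature $T$ in (\ref{eq 3.1.2}) depend only on the Riemannian structure of $\pi:X\to B$ and are compatible with tensor products, $\wt{\alpha}_*$ intertwines $g^{\pi_*E}$ with $g^{\pi_*F}$, $\nabla^{\pi_*E,u}$ with $\nabla^{\pi_*F,u}$, and the Clifford action $c(T)$ on both sides.

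Second, since $\wt{\alpha}_*$ intertwines $\DD^{S\otimes E,\alpha}$ with $\DD^{S\otimes F}$, the decomposition $(\pi_*E)^\pm=K^\pm\oplus L_{E,\alpha}^\pm$ from (\ref{eq 3.1.10}) is sent under $\wt{\alpha}_*$ to a decomposition $(\pi_*F)^\pm=\wt{\alpha}_*(K^\pm)\oplus\wt{\alpha}_*(L_{E,\alpha}^\pm)$ with all the required properties. Define
\[
L_{F,\alpha}^\pm:=\wt{\alpha}_*(L_{E,\alpha}^\pm),\qquad L_{F,\alpha}:=L_{F,\alpha}^+\oplus L_{F,\alpha}^-,
\]
equip $L_{F,\alpha}\to B$ with the restriction $g^{L_{F,\alpha}}$ of $g^{\pi_*F}$ and the projected connection $\nabla^{L_{F,\alpha}}$ as in (\ref{eq 3.1.11}), and let $\wt{\alpha}^L:=\wt{\alpha}_*|_{L_{E,\alpha}}$. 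Then $L_{F,\alpha}\to B$ satisfies the MF property for $\DD^{S\otimes F}$, and $\wt{\alpha}^L:(L_{E,\alpha},g^{L_{E,\alpha}})\to(L_{F,\alpha},g^{L_{F,\alpha}})$ is a $\Z_2$-graded isometric isomorphism. Uniqueness is automatic: the requirement that $\wt{\alpha}_*\oplus(\wt{\alpha}^L)^{\op}$ respects the MF decomposition forces $L_{F,\alpha}^\pm=\wt{\alpha}_*(L_{E,\alpha}^\pm)$ as subbundles of $(\pi_*F)^\pm$, and $g^{L_{F,\alpha}},\nabla^{L_{F,\alpha}}$ are then determined by the conventions fixed after (\ref{eq 3.1.11}).

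Third, consider the $\Z_2$-graded isometric bundle isomorphism $\Phi:=\wt{\alpha}_*\oplus(\wt{\alpha}^L)^{\op}:\pi_*E\oplus L_{E,\alpha}^{\op}\to\pi_*F\oplus L_{F,\alpha}^{\op}$. By the intertwining properties above and the fact that $\wt{\alpha}^L$ intertwines the inclusions $L_{E,\alpha}^-\hto(\pi_*E)^-$ and $L_{F,\alpha}^-\hto(\pi_*F)^-$ together with the projections $P^+$, the perturbed operators (\ref{eq 3.1.12}) satisfy $\Phi\circ\wt{\DD}^{S\otimes E,\alpha}(z)=\wt{\DD}^{S\otimes F}(z)\circ\Phi$ for all $z\in\C$. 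Consequently, the rescaled Bismut superconnections on the two sides satisfy $\Phi\circ\wh{\bbb}^{E,\alpha}_t=\wh{\bbb}^F_t\circ\Phi$ for every $t>0$. Since $\Phi$ is an isometry, the supertraces appearing in (\ref{eq 3.1.16}) are invariant under this conjugation, so the integrands — and therefore the Bismut--Cheeger eta forms — coincide; this yields (\ref{eq 3.2.24}). There is no real obstacle: the only point to keep track of is that all the horizontal/Riemannian data in (\ref{eq 3.1.1})--(\ref{eq 3.1.8}) are independent of the twisting bundle, so that $\wt{\alpha}_*$ is compatible with everything in sight.
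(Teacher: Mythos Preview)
Your proposal is correct and follows essentially the same approach as the paper: lift $\alpha$ to an isometry $\wt{\alpha}$ between the infinite-rank bundles $\pi_*E$ and $\pi_*F$, push the MF decomposition through it to define $L_{F,\alpha}:=\wt{\alpha}(L_{E,\alpha})$ and $\wt{\alpha}^L:=\wt{\alpha}|_{L_{E,\alpha}}$, verify that $\wh{\alpha}:=\wt{\alpha}\oplus(\wt{\alpha}^L)^{\op}$ conjugates $\wh{\bbb}^{E,\alpha}_t$ into $\wh{\bbb}^F_t$, and conclude by invariance of the supertrace under conjugation. The only cosmetic differences are notational (your $\wt{\alpha}_*$ and $\Phi$ are the paper's $\wt{\alpha}$ and $\wh{\alpha}$), and the paper spells out the verification that the projected connection satisfies $\nabla^{L_{E,\alpha}}=(\wt{\alpha}^L)^*\nabla^{L_{F,\alpha}}$ via the commuting square $\wt{\alpha}^L\circ P_{E,\alpha}=P_F\circ\wt{\alpha}$, a point you subsume under ``$\wt{\alpha}_*$ is compatible with everything in sight.''
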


One can think of Proposition \ref{prop 3.2} as the ``variational" formula for the Bismut--Cheeger eta form for the triples $(E, g^E, \alpha^*\nabla^F)$ and $(F, g^F, \nabla^F)$.
\begin{proof}
Note that $\alpha^*\nabla^F$ is a unitary connection on $E\to X$. By (a) of Remark \ref{remark 2.1},
\begin{align}
\nabla^{S(T^VX)\otimes E}&=\nabla^{T^VX}\otimes\id+\id\otimes\alpha^*\nabla^F\nonumber\\
&=\alpha^*(\nabla^{T^VX}\otimes\id+\id\otimes\nabla^F)\nonumber\\
\Rightarrow\nabla^{S(T^VX)\otimes E}&=\alpha^*\nabla^{S(T^VX)\otimes F}.\label{eq 3.2.25}
\end{align}
Thus
\begin{equation}\label{eq 3.2.26}
\DD^{S\otimes F}=\alpha\circ\DD^{S\otimes E, \alpha}\circ\alpha^{-1}.
\end{equation}

Write $\wt{\alpha}:\pi_*E\to\pi_*F$ for the $\Z_2$-graded bundle isomorphism induced by $\alpha$. Since $g^E=\alpha^*g^F$, it follows from (\ref{eq 3.1.5}) that
\begin{equation}\label{eq 3.2.27}
g^{\pi_*E}=\wt{\alpha}^*g^{\pi_*F}.
\end{equation}
That is, $\wt{\alpha}$ is isometric. By (\ref{eq 3.1.6}) and (\ref{eq 3.2.25}), $\nabla^{\pi_*E}=\wt{\alpha}^*\nabla^{\pi_*F}$, and therefore
\begin{equation}\label{eq 3.2.28}
\nabla^{\pi_*E, u}=\wt{\alpha}^*\nabla^{\pi_*F, u}.
\end{equation}

Let $K_{E, \alpha}\to B$ be a $\Z_2$-graded closed subbundle of $\pi_*E\to B$ that is complementary to $L_{E, \alpha}\to B$, i.e.
\begin{equation}\label{eq 3.2.29}
(\pi_*E)^+=K_{E, \alpha}^+\oplus L_{E, \alpha}^+,\qquad(\pi_*E)^-=K_{E, \alpha}^-\oplus L_{E, \alpha}^-,
\end{equation}
$\DD^{S\otimes E, \alpha}_+:(\pi_*E)^+\to(\pi_*E)^-$ is block diagonal as a map with respect to (\ref{eq 3.2.29}) and $\DD^{S\otimes E, \alpha}_+|_{K_{E, \alpha}^+}:K_{E, \alpha}^+\to K_{E, \alpha}^-$ is an isomorphism. Write \dis{\DD^{S\otimes E, \alpha}_+=\begin{pmatrix} a_{E, \alpha} & 0 \\ 0 & d_{E, \alpha} \end{pmatrix}}. Define complex vector bundles $L_{F, \alpha}^\pm\to B$ and $K_{F, \alpha}^\pm\to B$ by
$$L_{F, \alpha}^\pm:=\wt{\alpha}_\pm(L_{E, \alpha}^\pm),\qquad K_{F, \alpha}^\pm:=\wt{\alpha}_\pm(K_{E, \alpha}^\pm).$$
Denote by $\wt{\alpha}_\pm^L:L_{E, \alpha}^\pm\to L_{F, \alpha}^\pm$ and $\wt{\alpha}_\pm^K:K_{E, \alpha}^\pm\to K_{F, \alpha}^\pm$ the corresponding restriction maps. These are isomorphisms. Note that $L_{F, \alpha}^\pm\to B$ have finite ranks and
\begin{equation}\label{eq 3.2.30}
(\pi_*F)^+=K_{F, \alpha}^+\oplus L_{F, \alpha}^+,\qquad(\pi_*F)^-=K_{F, \alpha}^-\oplus L_{F, \alpha}^-.
\end{equation}
With respect to (\ref{eq 3.2.29}) and (\ref{eq 3.2.30}), $\wt{\alpha}_+$ and $\wt{\alpha}_-$ are given by
\begin{equation}\label{eq 3.2.31}
\wt{\alpha}_+=\begin{pmatrix} \wt{\alpha}_+^K & 0 \\ 0 & \wt{\alpha}_+^L \end{pmatrix},\qquad\wt{\alpha}_-=\begin{pmatrix} \wt{\alpha}_-^K & 0 \\ 0 & \wt{\alpha}_-^L \end{pmatrix}.
\end{equation}
Define $\Z_2$-graded complex vector bundles $L_{F, \alpha}\to B$ and $K_{F, \alpha}\to B$ by
$$L_{F, \alpha}=L_{F, \alpha}^+\oplus L_{F, \alpha}^-\qquad\textrm{ and }\qquad K_{F, \alpha}=K_{F, \alpha}^+\oplus K_{F, \alpha}^-.$$
Then the map $\wt{\alpha}^L:L_{E, \alpha}\to L_{F, \alpha}$ given by
$$\wt{\alpha}^L=\wt{\alpha}^L_+\oplus\wt{\alpha}^L_-$$
is a $\Z_2$-graded smooth bundle isomorphism. Since the Hermitian metric $g^{L_{E, \alpha}}$ on $L_{E, \alpha}\to B$ is inherited from $g^{\pi_*E}$, it follows from (\ref{eq 3.2.27}) that the Hermitian metric $g^{L_{F, \alpha}}$ on $L_{F, \alpha}\to B$, inherited from $g^{\pi_*F}$, satisfies $g^{L_{E, \alpha}}=(\wt{\alpha}^L)^*g^{L_{F, \alpha}}$. Thus $\wt{\alpha}^L$ is isometric.

By (\ref{eq 3.2.26}) and (\ref{eq 3.2.31}),
\begin{displaymath}
\begin{split}
\DD^{S\otimes F}_+&=\wt{\alpha}_-\circ\DD^{S\otimes E, \alpha}_+\circ\wt{\alpha}_+^{-1}\\
&=\begin{pmatrix} \wt{\alpha}_-^K & 0 \\ 0 & \wt{\alpha}_-^L \end{pmatrix}\begin{pmatrix} a_{E, \alpha} & 0 \\ 0 & d_{E, \alpha} \end{pmatrix}\begin{pmatrix} (\wt{\alpha}_+^K)^{-1} & 0 \\ 0 & (\wt{\alpha}_+^L)^{-1} \end{pmatrix}\\
&=\begin{pmatrix} \wt{\alpha}_-^K\circ a_{E, \alpha}\circ(\wt{\alpha}_+^K)^{-1} & 0 \\ 0 & \wt{\alpha}_-^L\circ d_{E, \alpha}\circ(\wt{\alpha}_+^L)^{-1} \end{pmatrix}.
\end{split}
\end{displaymath}
Thus $\DD^{S\otimes F}_+$ is block diagonal as a map with respect to (\ref{eq 3.2.30}) and
$$\DD^{S\otimes F}|_{K_F^+}=\wt{\alpha}_-^K\circ a_{E, \alpha}\circ(\wt{\alpha}_+^K)^{-1}:K_F^+\to K_F^-$$
is an isomorphism. Therefore $L_{F, \alpha}\to B$ satisfies the MF property for $\DD^{S\otimes F}$.

Denote by $P_{E, \alpha}:\pi_*E\to L_{E, \alpha}$ and $P_F:\pi_*F\to L_{F, \alpha}$ the $\Z_2$-graded projection maps with respect to (\ref{eq 3.2.29}) and (\ref{eq 3.2.30}), respectively. Note that the following diagram commutes.
\begin{center}
\begin{tikzcd}
\pi_*E \arrow{r}{\wt{\alpha}} \arrow{d}[swap]{P_{E, \alpha}} & \pi_*F \arrow{d}{P_F} \\ L_{E, \alpha} \arrow{r}[swap]{\wt{\alpha}^L} & L_{F, \alpha}
\end{tikzcd}
\end{center}
That is,
\begin{equation}\label{eq 3.2.32}
\wt{\alpha}^L\circ P_{E, \alpha}=P_F\circ\wt{\alpha}.
\end{equation}
Let $s\in\Gamma(B, L_{E, \alpha})$. By (\ref{eq 3.1.11}), (\ref{eq 3.2.28}) and (\ref{eq 3.2.32}),
\begin{displaymath}
\begin{split}
\nabla^{L_{E, \alpha}}s&=(P_{E, \alpha}\circ\nabla^{\pi_*E, u}\circ P_{E, \alpha})(s)\\
&=(P_{E, \alpha}\circ\nabla^{\pi_*E, u})(s)\\
&=(P_{E, \alpha}\circ\wt{\alpha}^{-1}\circ\nabla^{\pi_*F, u}\circ\wt{\alpha})(s)\\
&=((\wt{\alpha}^L)^{-1}\circ P_F\circ\nabla^{\pi_*F, u})(\wt{\alpha}^L(s))\\
&=((\wt{\alpha}^L)^{-1}\circ P_F\circ\nabla^{\pi_*F, u}\circ P_F)(\wt{\alpha}^L(s))\\
&=((\wt{\alpha}^L)^{-1}\circ\nabla^{L_{F, \alpha}})(\wt{\alpha}^L(s))\\
&=((\wt{\alpha}^L)^{-1}\circ\nabla^{L_{F, \alpha}}\circ\wt{\alpha}^L)(s).
\end{split}
\end{displaymath}
Thus
\begin{equation}\label{eq 3.2.33}
\nabla^{L_{E, \alpha}}=(\wt{\alpha}^L)^*\nabla^{L_{F, \alpha}}.
\end{equation}

Since $\wt{\alpha}:\pi_*E\to\pi_*F$ and $\wt{\alpha}^L:L_{E, \alpha}\to L_{F, \alpha}$ are $\Z_2$-graded isometric isomorphisms, the same is true for $\wh{\alpha}:=\wt{\alpha}\oplus\wt{\alpha}^{L, \op}:\pi_*E\oplus L_{E, \alpha}^{\op}\to\pi_*F\oplus L_{F, \alpha}^{\op}$. By the definition of $\wt{\DD}^{S\otimes E, \alpha}_+(z)$, it follows from (\ref{eq 3.2.26}) that
\begin{equation}\label{eq 3.2.34}
\wt{\DD}^{S\otimes E, \alpha}(z)=\wh{\alpha}^{-1}\circ\wt{\DD}^{S\otimes F}(z)\circ\wh{\alpha}
\end{equation}
for every $z\in\C$. On the other hand, by (\ref{eq 3.2.28}) and (\ref{eq 3.2.33}),
\begin{equation}\label{eq 3.2.35}
\nabla^{\pi_*E, u}\oplus\nabla^{L_{E, \alpha}, \op}=\wh{\alpha}^*(\nabla^{\pi_*F, u}\oplus\nabla^{L_{F, \alpha}, \op}).
\end{equation}
By the definition of $\wh{\bbb}^{E, \alpha}$, it follows from (\ref{eq 3.2.34}) and (\ref{eq 3.2.35}) that
$$\wh{\bbb}^{E, \alpha}=\wh{\alpha}^*\wh{\bbb}^F.$$
Since
\begin{displaymath}
\begin{split}
(\wh{\bbb}^{E, \alpha}_t)^{2k}&=(\wh{\alpha}^{-1}\circ\wh{\bbb}^F_t\circ\wh{\alpha})^{2k}=\wh{\alpha}^{-1}
\circ(\wh{\bbb}^F_t)^{2k}\circ\wh{\alpha},\\
\frac{d\wh{\bbb}^{E, \alpha}_t}{dt}&=\frac{d(\wh{\alpha}^*\wh{\bbb}^F_t)}{dt}=\wh{\alpha}^{-1}\circ\frac{d\wh{\bbb}^F_t}{dt}\circ\wh{\alpha},
\end{split}
\end{displaymath}
for any nonnegative integer $k$, it follows that
\begin{displaymath}
\begin{split}
\str\bigg(\frac{d\wh{\bbb}^{E, \alpha}_t}{dt}e^{-\frac{1}{2\pi i}(\wh{\bbb}^{E, \alpha}_t)^2}\bigg)&=\str\bigg(\wh{\alpha}^{-1}\circ
\frac{d\wh{\bbb}^F_t}{dt}e^{-\frac{1}{2\pi i}(\wh{\bbb}^F_t)^2}\circ\wh{\alpha}\bigg)\\
&=\str\bigg(\frac{d\wh{\bbb}^F_t}{dt}e^{-\frac{1}{2\pi i}(\wh{\bbb}^F_t)^2}\bigg).
\end{split}
\end{displaymath}
Thus (\ref{eq 3.2.24}) holds.
\end{proof}

We now prove Theorem \ref{thm 1.1}.
\begin{thm}\label{thm 3.1}
Let $\pi:X\to B$ be a submersion with closed, oriented and spin$^c$ fibers of even dimension, equipped with two sets of Riemannian and differential spin$^c$ structures
$$(T^H_0X, g^{T^VX}_0, g^\lambda_0, \nabla^\lambda_0),\qquad(T^H_1X, g^{T^VX}_1, g^\lambda_1, \nabla^\lambda_1),$$
where the underlying topological spin$^c$ structures coincide. Let $(E, g^E, \nabla^E)$ and $(F, g^F, \nabla^F)$ be triples. Denote by $\DD^{S\otimes E}$ and $\DD^{S\otimes F}$ the twisted spin$^c$ Dirac operators defined in terms of
$$(g^E, \nabla^E, T^H_0X, g^{T^VX}_0, g^\lambda_0, \nabla^\lambda_0)\qquad\textrm{ and }\qquad(g^F, \nabla^F, T^H_1X, g^{T^VX}_1, g^\lambda_1, \nabla^\lambda_1).$$
Let $(L_E, g^{L_E}, \nabla^{L_E})$ and $(L_F, g^{L_F}, \nabla^{L_F})$ be $\Z_2$-graded triples so that $L_E\to B$ and $L_F\to B$ satisfy the MF property for $\DD^{S\otimes E}$ and $\DD^{S\otimes F}$, respectively.

If there exists an isometric isomorphism $\alpha:(E, g^E)\to(F, g^F)$, then there exist balanced $\Z_2$-graded triples $(W_E, g^{W_E}, \nabla^{W_E})$ and $(W_F, g^{W_F}, \nabla^{W_F})$ and a $\Z_2$-graded isometric isomorphism
$$\wt{h}:(L_E\oplus W_E, g^{L_E}\oplus g^{W_E})\to(L_F\oplus W_F, g^{L_F}\oplus g^{W_F}),$$
which depends on $\alpha$, such that
\begin{equation}\label{eq 3.2.36}
\begin{split}
&\wh{\eta}^F(g^F, \nabla^F, T^H_1X, g^{T^VX}_1, g^\lambda_1, \nabla^\lambda_1, L_F)-\wh{\eta}^E(g^E, \nabla^E, T^H_0X, g^{T^VX}_0, g^\lambda_0, \nabla^\lambda_0, L_E)\\
&\equiv\int_{X/B}\big(T\wh{A}(\nabla^{T^VX}_0, \nabla^{T^VX}_1)\wedge e^{\frac{1}{2}c_1(\nabla^\lambda_0)}+\wh{A}(\nabla^{T^VX}_1)
\wedge e^{\frac{1}{2}Tc_1(\nabla^\lambda_0, \nabla^\lambda_1)}\big)\wedge\ch(\nabla^E)\\
&\quad+\int_{X/B}\todd(\nabla^{T^VX}_1)\wedge\CS(\nabla^E, \alpha^*\nabla^F)-\CS(\nabla^{L_E}\oplus\nabla^{W_E}, \wt{h}^*(\nabla^{L_F}\oplus\nabla^{W_F})).
\end{split}
\end{equation}
\end{thm}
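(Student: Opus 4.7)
The plan is to derive Theorem \ref{thm 3.1} by chaining together Proposition \ref{prop 3.1} (pure variational formula for a single bundle under a change of Riemannian and differential spin$^c$ structures) and Proposition \ref{prop 3.2} (identification of Bismut--Cheeger eta forms for isometrically isomorphic triples). The isometry $\alpha$ plays the role of a ``transport map'' that lets us move between the $E$ picture and the $F$ picture without changing the underlying geometry on $X\to B$, while Proposition \ref{prop 3.1} handles the change of the geometry itself.

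Concretely, I would first apply Proposition \ref{prop 3.2} with respect to the structure $(T^H_1X, g^{T^VX}_1, g^\lambda_1, \nabla^\lambda_1)$ and the isometric isomorphism $\alpha\colon(E,g^E)\to(F,g^F)$. Since $L_F\to B$ satisfies the MF property for $\DD^{S\otimes F}$ (defined by $\nabla^F$ and the index-1 geometry), Proposition \ref{prop 3.2}, read in reverse, produces a unique $\Z_2$-graded triple $(L_{E,\alpha}, g^{L_{E,\alpha}}, \nabla^{L_{E,\alpha}})$ and a $\Z_2$-graded isometric isomorphism $\wt{\alpha}^L\colon(L_{E,\alpha}, g^{L_{E,\alpha}})\to(L_F, g^{L_F})$ such that $L_{E,\alpha}\to B$ satisfies the MF property for $\DD^{S\otimes E,\alpha}$ (defined by $\alpha^*\nabla^F$ and the index-1 geometry), with
\begin{equation*}
\wh{\eta}^F(g^F, \nabla^F, T^H_1X, g^{T^VX}_1, g^\lambda_1, \nabla^\lambda_1, L_F)=\wh{\eta}^E(g^E, \alpha^*\nabla^F, T^H_1X, g^{T^VX}_1, g^\lambda_1, \nabla^\lambda_1, L_{E,\alpha}).
\end{equation*}
In particular, $\nabla^{L_{E,\alpha}}=(\wt{\alpha}^L)^*\nabla^{L_F}$ from (\ref{eq 3.2.33}).

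Next, I would apply Proposition \ref{prop 3.1} to the single complex vector bundle $E\to X$ with the two pairs $(g^E,\nabla^E)$ and $(g^E, \alpha^*\nabla^F)$ (note $\alpha^*g^F=g^E$ by our standing convention in Remark \ref{remark 2.1}(b)) and the two Riemannian and differential spin$^c$ structures labeled $0$ and $1$. The MF subbundles are $L_E\to B$ (at $0$) and $L_{E,\alpha}\to B$ (at $1$). This yields balanced $\Z_2$-graded triples $(W_E, g^{W_E}, \nabla^{W_E})$ and $(W_0, g^{W_0}, \nabla^{W_0})$ and a $\Z_2$-graded isometric isomorphism $h\colon(L_E\oplus W_E, g^{L_E}\oplus g^{W_E})\to(L_{E,\alpha}\oplus W_0, g^{L_{E,\alpha}}\oplus g^{W_0})$ for which Proposition \ref{prop 3.1} gives the difference formula with the tangential and $\CS(\nabla^E,\alpha^*\nabla^F)$ terms already identical to those in (\ref{eq 3.2.36}), and with a final Chern--Simons term $\CS(\nabla^{L_E}\oplus\nabla^{W_E}, h^*(\nabla^{L_{E,\alpha}}\oplus\nabla^{W_0}))$.

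To match the statement of Theorem \ref{thm 3.1}, I would set $W_F=W_0$ and define $\wt{h}\colon L_E\oplus W_E\to L_F\oplus W_F$ by $\wt{h}:=(\wt{\alpha}^L\oplus\id_{W_F})\circ h$, which is $\Z_2$-graded and isometric as a composition of such. Then, using $\nabla^{L_{E,\alpha}}=(\wt{\alpha}^L)^*\nabla^{L_F}$, $\nabla^{W_0}=\id_{W_F}^*\nabla^{W_F}$, the pullback identity for direct sums, and the naturality property $\CS(\phi^*\nabla_0,\phi^*\nabla_1)\equiv\CS(\nabla_0,\nabla_1)$ for an isometric isomorphism $\phi$ (as used already in (\ref{eq 3.2.18})), I would rewrite
\begin{equation*}
\CS(\nabla^{L_E}\oplus\nabla^{W_E}, h^*(\nabla^{L_{E,\alpha}}\oplus\nabla^{W_0}))\equiv \CS(\nabla^{L_E}\oplus\nabla^{W_E}, \wt{h}^*(\nabla^{L_F}\oplus\nabla^{W_F})),
\end{equation*}
which turns the output of Proposition \ref{prop 3.1} into exactly (\ref{eq 3.2.36}).

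The routine parts are the bookkeeping of geometric structures and the algebraic identity for the pullback under $\wt{\alpha}^L\oplus\id_{W_F}$ of the Chern--Simons form on $L_F\oplus W_F$. The only genuine subtlety, and what I expect to be the main obstacle to present cleanly, is the correct invocation of Proposition \ref{prop 3.2} in the direction from $F$ to $E$: one must verify that starting from $(L_F, g^{L_F}, \nabla^{L_F})$ with the MF property for $\DD^{S\otimes F}$ and using $\alpha^{-1}$ does produce a triple $(L_{E,\alpha}, g^{L_{E,\alpha}}, \nabla^{L_{E,\alpha}})$ with the MF property for $\DD^{S\otimes E,\alpha}$ together with an isometric $\wt{\alpha}^L$ compatible with the projected unitary connections, so that the combined isometric isomorphism $\wt{h}$ is well defined. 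Once this identification is in place, the argument reduces to applying Propositions \ref{prop 3.1} and \ref{prop 3.2} in sequence and using (\ref{eq 2.2.6}) to absorb the trivial $\id_{W_F}^*\nabla^{W_F}$ factor.
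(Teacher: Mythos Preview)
Your proposal is correct and follows the same overall strategy as the paper: chain Proposition \ref{prop 3.1} (variation of geometric data on the fixed bundle $E$) with Proposition \ref{prop 3.2} (transport along the isometry $\alpha$), then absorb the resulting pullback into the final Chern--Simons term.

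The only difference is the order and direction in which the two propositions are invoked, and this makes your argument slightly more economical. The paper first picks an arbitrary $L_{E,\alpha}$ with the MF property for $\DD^{S\otimes E,\alpha}$, applies Proposition \ref{prop 3.1} to go from $L_E$ to $L_{E,\alpha}$, then applies Proposition \ref{prop 3.2} in the forward direction to produce some $L_{F,\alpha}$, and finally needs a third step (\cite[Corollary 1]{H20}) to compare $L_{F,\alpha}$ with the given $L_F$; the three resulting Chern--Simons terms are then merged via (\ref{eq 2.2.5}) and (\ref{eq 2.2.6}). You instead apply Proposition \ref{prop 3.2} to $\alpha^{-1}$ so that the $L_{E,\alpha}$ you obtain is already isometrically isomorphic to the given $L_F$ (with $\nabla^{L_{E,\alpha}}=(\wt{\alpha}^L)^*\nabla^{L_F}$), and then a single application of Proposition \ref{prop 3.1} finishes the job. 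The paper's route has the minor advantage of quoting Proposition \ref{prop 3.2} verbatim rather than in its symmetric form, but your observation that the proposition applies equally well to $\alpha^{-1}\colon(F,g^F)\to(E,g^E)$ with target connection $\alpha^*\nabla^F$ (so that $(\alpha^{-1})^*(\alpha^*\nabla^F)=\nabla^F$) is valid and eliminates the third step entirely.
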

\begin{proof}
Write $\DD^{S\otimes E, \alpha}$ for the twisted spin$^c$ Dirac operator defined in terms of
$$(g^E, \alpha^*\nabla^F, T^H_1X, g^{T^VX}_1, g^\lambda_1, \nabla^\lambda_1).$$
Let $(L_{E, \alpha}, g^{L_{E, \alpha}}, \nabla^{L_{E, \alpha}})$ be a $\Z_2$-graded triple so that $L_{E, \alpha}\to B$ satisfies the MF property for $\DD^{S\otimes E, \alpha}$. By Proposition \ref{prop 3.1}, there exist balanced $\Z_2$-graded triples $(W_0, g^{W_0}, \nabla^{W_0})$ and $(W_1, g^{W_1}, \nabla^{W_1})$ and a $\Z_2$-graded isometric isomorphism
\begin{equation}\label{eq 3.2.37}
h:(L_E\oplus W_0, g^{L_E}\oplus g^{W_0})\to(L_{E, \alpha}\oplus W_1, g^{L_{E, \alpha}}\oplus g^{W_1})
\end{equation}
such that
\begin{equation}\label{eq 3.2.38}
\begin{split}
&\wh{\eta}^E(g^E, \alpha^*\nabla^F, T^H_1X, g^{T^VX}_1, g^\lambda_1, \nabla^\lambda_1, L_{E, \alpha})-\wh{\eta}^E(g^E, \nabla^E, T^H_0X, g^{T^VX}_0, g^\lambda_0, \nabla^\lambda_0, L_E)\\
&\equiv\int_{X/B}\big(T\wh{A}(\nabla^{T^VX}_0, \nabla^{T^VX}_1)\wedge e^{\frac{1}{2}c_1(\nabla^\lambda_0)}+\wh{A}(\nabla^{T^VX}_1)\wedge
e^{\frac{1}{2}Tc_1(\nabla^\lambda_0, \nabla^\lambda_1)}\big)\wedge\ch(\nabla^E)\\
&\quad+\int_{X/B}\todd(\nabla^{T^VX}_1)\wedge\CS(\nabla^E, \alpha^*\nabla^F)-\CS(\nabla^{L_E}\oplus\nabla^{W_0}, h^*(\nabla^{L_{E, \alpha}}\oplus\nabla^{W_1})).
\end{split}
\end{equation}

By applying Proposition \ref{prop 3.2} to the isometric isomorphism $\alpha$, there exist a unique $\Z_2$-graded triple $(L_{F, \alpha}, g^{L_{F, \alpha}}, \nabla^{L_{F, \alpha}})$ and a unique $\Z_2$-graded isometric isomorphism
\begin{equation}\label{eq 3.2.39}
\wt{\alpha}^L:(L_{E, \alpha}, g^{L_{E, \alpha}})\to(L_{F, \alpha}, g^{L_{F, \alpha}})
\end{equation}
such that $L_{F, \alpha}\to B$ satisfies the MF property for $\DD^{S\otimes F}$ and
\begin{equation}\label{eq 3.2.40}
\wh{\eta}^E(g^E, \alpha^*\nabla^F, T^H_1X, g^{T^VX}_1, g^\lambda_1, \nabla^\lambda_1, L_{E, \alpha})=\wh{\eta}^F(g^F, \nabla^F, T^H_1X, g^{T^VX}_1, g^\lambda_1, \nabla^\lambda_1, L_{F, \alpha}).
\end{equation}

Since $(L_F, g^{L_F}, \nabla^{L_F})$ and $(L_{F, \alpha}, g^{L_{F, \alpha}}, \nabla^{L_{F, \alpha}})$ are $\Z_2$-graded triples so that both $L_F\to B$ and $L_{F, \alpha}\to B$ satisfy the MF property for $\DD^{S\otimes F}$, it follows from \cite[Corollary 1]{H20} that there exist balanced $\Z_2$-graded triples $(W_{F, 0}, g^{W_{F, 0}}, \nabla^{W_{F, 0}})$ and $(W_{F, 1}, g^{W_{F, 1}}, \nabla^{W_{F, 1}})$ and a $\Z_2$-graded isometric isomorphism
\begin{equation}\label{eq 3.2.41}
h_F:(L_{F, \alpha}\oplus W_{F, 0}, g^{L_{F, \alpha}}\oplus g^{W_{F, 0}})\to(L_F\oplus W_{F, 1}, g^{L_F}\oplus g^{W_{F, 1}})
\end{equation}
such that
\begin{equation}\label{eq 3.2.42}
\begin{split}
&\wh{\eta}^F(g^F, \nabla^F, T^H_1X, g^{T^VX}_1, g^\lambda_1, \nabla^\lambda_1, L_F)-\wh{\eta}^F(g^F, \nabla^F, T^H_1X, g^{T^VX}_1, g^\lambda_1, \nabla^\lambda_1, L_{F, \alpha})\\
&\equiv-\CS(\nabla^{L_{F, \alpha}}\oplus\nabla^{W_{F, 0}}, h_F^*(\nabla^{L_F}\oplus\nabla^{W_{F, 1}})).
\end{split}
\end{equation}
By (\ref{eq 3.2.37}), (\ref{eq 3.2.39}) and (\ref{eq 3.2.41}), the map
$$\wt{h}:(L_E\oplus W_0\oplus W_{F, 0}, g^{L_E}\oplus g^{W_0}\oplus g^{W_{F, 0}})\to(L_F\oplus W_1\oplus W_{F, 1}, g^{L_F}\oplus g^{W_1}\oplus g^{W_{F, 1}})$$
given by the composition
\begin{center}
\begin{tikzcd}
L_E\oplus W_0\oplus W_{F, 0} \arrow{d}{h\oplus\id_{W_{F, 0}}} \\ L_{E, \alpha}\oplus W_1\oplus W_{F, 0} \arrow{d}{\wt{\alpha}^L\oplus \id_{W_1}\oplus\id_{W_{F, 0}}} \\ L_{F, \alpha}\oplus W_1\oplus W_{F, 0} \arrow{d}{h_F\oplus\id_{W_1}} \\ L_F\oplus W_1\oplus W_{F, 1}
\end{tikzcd}
\end{center}
is a $\Z_2$-graded isometric isomorphism. On the other hand, (\ref{eq 3.2.40}) and (\ref{eq 3.2.42}) imply
\begin{equation}\label{eq 3.2.43}
\begin{split}
&\wh{\eta}^E(g^E, \alpha^*\nabla^F, T^H_1X, g^{T^VX}_1, g^\lambda_1, \nabla^\lambda_1, L_{E, \alpha})\\
&=\wh{\eta}^F(g^F, \nabla^F, T^H_1X, g^{T^VX}_1, g^\lambda_1, \nabla^\lambda_1, L_{F, \alpha})\\
&\equiv\wh{\eta}^F(g^F, \nabla^F, T^H_1X, g^{T^VX}_1, g^\lambda_1, \nabla^\lambda_1, L_F)+\CS(\nabla^{L_{F, \alpha}}\oplus\nabla^{W_{F, 0}}, h_F^*(\nabla^{L_F}\oplus\nabla^{W_{F, 1}})).
\end{split}
\end{equation}
By (\ref{eq 3.2.43}), (\ref{eq 3.2.38}) becomes
\begin{equation}\label{eq 3.2.44}
\begin{split}
&\wh{\eta}^F(g^F, \nabla^F, T^H_1X, g^{T^VX}_1, g^\lambda_1, \nabla^\lambda_1, L_F)-\wh{\eta}^E(g^E, \nabla^E, T^H_0X, g^{T^VX}_0, g^\lambda_0, \nabla^\lambda_0, L_E)\\
&\equiv\int_{X/B}\big(T\wh{A}(\nabla^{T^VX}_0, \nabla^{T^VX}_1)\wedge e^{\frac{1}{2}c_1(\nabla^\lambda_0)}+\wh{A}(\nabla^{T^VX}_1)\wedge
e^{\frac{1}{2}Tc_1(\nabla^\lambda_0, \nabla^\lambda_1)}\big)\wedge\ch(\nabla^E)\\
&\quad+\int_{X/B}\todd(\nabla^{T^VX}_1)\wedge\CS(\nabla^E, \alpha^*\nabla^F)-\CS(\nabla^{L_E}\oplus\nabla^{W_0}, h^*(\nabla^{L_{E, \alpha}}\oplus\nabla^{W_1}))\\
&\quad-\CS(\nabla^{L_{F, \alpha}}\oplus\nabla^{W_{F, 0}}, h_F^*(\nabla^{L_F}\oplus\nabla^{W_{F, 1}})).
\end{split}
\end{equation}
Since $\nabla^{L_{E, \alpha}}=(\wt{\alpha}^L)^*\nabla^{L_{F, \alpha}}$ by (\ref{eq 3.2.33}), it follows that
$$\nabla^{L_{E, \alpha}}\oplus\nabla^{W_1}\oplus\nabla^{W_{F, 0}}=(\wt{\alpha}^L\oplus\id_{W_1}\oplus\id_{W_{F, 0}})^*(\nabla^{L_{F, \alpha}}\oplus\nabla^{W_1}\oplus\nabla^{W_{F, 0}}).$$
Thus
\begin{equation}\label{eq 3.2.45}
\begin{split}
&\CS(\nabla^{L_{F, \alpha}}\oplus\nabla^{W_1}\oplus\nabla^{W_{F, 0}}, (h_F\oplus\id_{W_1})^*(\nabla^{L_F}\oplus\nabla^{W_1}\oplus\nabla^{W_{F, 1}}))\\
&\equiv\CS((h\oplus\id_{W_{F, 0}})^*(\wt{\alpha}^L\oplus\id_{W_1}\oplus\id_{W_{F, 0}})^*(\nabla^{L_{F, \alpha}}\oplus\nabla^{W_1}\oplus\nabla^{W_{F, 0}}), \\
&\qquad(h\oplus\id_{W_{F, 0}})^*(\wt{\alpha}^L\oplus\id_{W_1}\oplus\id_{W_{F, 0}})^*(h_F\oplus\id_{W_1})^*(\nabla^{L_F}\oplus\nabla^{W_1}\oplus\nabla^{W_{F, 1}}))\\
&\equiv\CS((h\oplus\id_{W_{F, 0}})^*(\nabla^{L_{E, \alpha}}\oplus\nabla^{W_1}\oplus\nabla^{W_{F, 0}}), \wt{h}^*(\nabla^{L_F}\oplus\nabla^{W_1}\oplus\nabla^{W_{F, 1}})).
\end{split}
\end{equation}
Since $\CS(\nabla^{W_{F, 0}}, \id_{W_{F, 0}}^*\nabla^{W_{F, 0}})\equiv 0$ and $\CS(\nabla^{W_1}, \id_{W_1}^*\nabla^{W_1})\equiv 0$, it follows from (\ref{eq 3.2.45}) and (\ref{eq 2.2.5}) that the sum of the last two terms of the right-hand side of (\ref{eq 3.2.44}) is equal to
\begin{equation}\label{eq 3.2.46}
\begin{split}
&\CS(\nabla^{L_E}\oplus\nabla^{W_0}, h^*(\nabla^{L_{E, \alpha}}\oplus\nabla^{W_1}))+\CS(\nabla^{L_{F, \alpha}}\oplus\nabla^{W_{F, 0}}, h_F^*(\nabla^{L_F}\oplus\nabla^{W_{F, 1}}))\\
&\equiv\CS(\nabla^{L_E}\oplus\nabla^{W_0}\oplus\nabla^{W_{F, 0}}, (h\oplus\id_{W_{F, 0}})^*(\nabla^{L_{E, \alpha}}\oplus\nabla^{W_1}
\oplus\nabla^{W_{F, 0}}))\\
&\quad+\CS(\nabla^{L_{F, \alpha}}\oplus\nabla^{W_1}\oplus\nabla^{W_{F, 0}}, (h_F\oplus\id_{W_1})^*(\nabla^{L_F}\oplus\nabla^{W_1}\oplus
\nabla^{W_{F, 1}}))\\
&\equiv\CS(\nabla^{L_E}\oplus\nabla^{W_0}\oplus\nabla^{W_{F, 0}}, (h\oplus\id_{W_{F, 0}})^*(\nabla^{L_{E, \alpha}}\oplus\nabla^{W_1}
\oplus\nabla^{W_{F, 0}}))\\
&\quad+\CS((h\oplus\id_{W_{F, 0}})^*(\nabla^{L_{E, \alpha}}\oplus\nabla^{W_1}\oplus\nabla^{W_{F, 0}}), \wt{h}^*(\nabla^{L_F}\oplus\nabla^{W_1}\oplus\nabla^{W_{F, 1}}))\\
&\equiv\CS(\nabla^{L_E}\oplus\nabla^{W_0}\oplus\nabla^{W_{F, 0}}, \wt{h}^*(\nabla^{L_F}\oplus\nabla^{W_1}\oplus\nabla^{W_{F, 1}})).
\end{split}
\end{equation}
By taking $W_E=W_0\oplus W_{F, 0}$ and $W_F=W_1\oplus W_{F, 1}$ and similarly for $g^{W_E}$, $g^{W_F}$ and $\nabla^{W_E}$, $\nabla^{W_F}$, (\ref{eq 3.2.44}) and (\ref{eq 3.2.46}) show that (\ref{eq 3.2.36}) holds.
\end{proof}

Note that Remark \ref{remark 3.1} applies to Theorem \ref{thm 3.1} as well.

\section{Applications of the extended variational formula for the Bismut--Cheeger eta form}\label{s 4}

In this section we present some applications of Proposition \ref{prop 1.1} and Theorem \ref{thm 1.1}. All the results in this subsection are under the following setup. Let $\pi:X\to B$ be a submersion with closed, oriented and spin$^c$ fibers of even dimension, equipped with a fixed Riemannian and differential spin$^c$ structure $(T^HX, g^{T^VX}, g^\lambda, \nabla^\lambda)$. In this section, we suppress the dependence of the Bismut--Cheeger eta form on $(T^HX, g^{T^VX}, g^\lambda, \nabla^\lambda)$ when no confusion arises.

\subsection{$\Z_2$-graded additivity of the Bismut--Cheeger eta form}\label{s 4.1}

In this subsection, we establish some intermediate results for the Bismut--Cheeger eta form. These results, along with Proposition \ref{prop 3.2}, are used to prove the $\Z_2$-graded additivity of the Bismut--Cheeger eta form (Theorem \ref{thm 1.2}).

The following proposition says the Bismut--Cheeger eta form is additive with respect to direct sums.
\begin{prop}\label{prop 4.1}
Let $(E, g^E, \nabla^E)$ and $(F, g^F, \nabla^F)$ be two $\Z_2$-graded triples over $X$. If $(L_E, g^{L_E}, \nabla^{L_E})$ and $(L_F, g^{L_F}, \nabla^{L_F})$ are $\Z_2$-graded triples so that $L_E\to B$ and $L_F\to B$ satisfy the MF property for $\DD^{S\otimes E}$ and $\DD^{S\otimes F}$, respectively, then there exists a $\Z_2$-graded triple
$$(L_{E\oplus F}, g^{L_{E\oplus F}}, \nabla^{L_{E\oplus F}})$$
such that $L_{E\oplus F}\to B$ satisfies the MF property for $\DD^{S\otimes(E\oplus F)}$ and
$$\wh{\eta}^{E\oplus F}(g^E\oplus g^F, \nabla^E\oplus\nabla^F, L_{E\oplus F})\equiv\wh{\eta}^E(g^E, \nabla^E, L_E)+\wh{\eta}^F(g^F, \nabla^F, L_F).$$
\end{prop}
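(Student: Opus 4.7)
The natural candidate is $L_{E\oplus F}:=L_E\oplus L_F$, equipped with the $\Z_2$-grading $L_{E\oplus F}^\pm:=L_E^\pm\oplus L_F^\pm$, the direct sum Hermitian metric $g^{L_E}\oplus g^{L_F}$, and the direct sum unitary connection $\nabla^{L_E}\oplus\nabla^{L_F}$. The plan is to verify the MF property for this choice and then show that the rescaled Bismut superconnection for $\DD^{S\otimes(E\oplus F)}$ with $L_{E\oplus F}$ decouples, up to a canonical reshuffling of summands, as the direct sum of the rescaled Bismut superconnections for $\DD^{S\otimes E}$ with $L_E$ and $\DD^{S\otimes F}$ with $L_F$. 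Additivity of the eta form will then follow from additivity of the supertrace and the linearity of $\frac{d}{dt}$.

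For the MF property, I would use the canonical identification $\pi_*(E\oplus F)\cong\pi_*E\oplus\pi_*F$, which is $\Z_2$-graded, isometric with respect to the $L^2$-metric (\ref{eq 3.1.5}), and intertwines the unitary connections (\ref{eq 3.1.7}), since the spinor data $(g^{S(T^VX)},\nabla^{S(T^VX)})$ and the horizontal $1$-form $k$ of (\ref{eq 3.1.1}) depend only on the base geometry. Combining the MF splittings for $\pi_*E$ and $\pi_*F$ gives
$$(\pi_*(E\oplus F))^\pm=(K_E^\pm\oplus K_F^\pm)\oplus(L_E^\pm\oplus L_F^\pm).$$
Because $\DD^{S\otimes(E\oplus F)}=\DD^{S\otimes E}\oplus\DD^{S\otimes F}$ under this identification, it is block diagonal with respect to this decomposition, and its restriction to $K_E^+\oplus K_F^+$ is the direct sum of two isomorphisms onto $K_E^-\oplus K_F^-$. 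Hence $L_{E\oplus F}\to B$ satisfies the MF property for $\DD^{S\otimes(E\oplus F)}$, and by (\ref{eq 3.1.11}) the inherited connection coincides with $\nabla^{L_E}\oplus\nabla^{L_F}$.

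For the decoupling, I would note that the curvature $2$-form $T$ of (\ref{eq 3.1.2}) depends only on the Riemannian structure of $\pi:X\to B$, hence is the same in all three cases. Under the reshuffling $\pi_*(E\oplus F)\oplus L_{E\oplus F}^{\op}\cong(\pi_*E\oplus L_E^{\op})\oplus(\pi_*F\oplus L_F^{\op})$, the operator $\wt{\DD}^{S\otimes(E\oplus F)}(z)$ from (\ref{eq 3.1.12}) splits as $\wt{\DD}^{S\otimes E}(z)\oplus\wt{\DD}^{S\otimes F}(z)$ for every $z\in\C$, since both the Dirac part and the off-diagonal inclusion/projection pieces respect the direct sum. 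Fixing a single rescaling function $\alpha$ for all three cases, it follows that $\wh{\bbb}^{E\oplus F}_t=\wh{\bbb}^E_t\oplus\wh{\bbb}^F_t$ for every $t>0$. Additivity of the supertrace on block diagonal operators then gives
$$\str\bigg(\frac{d\wh{\bbb}^{E\oplus F}_t}{dt}e^{-\frac{1}{2\pi i}(\wh{\bbb}^{E\oplus F}_t)^2}\bigg)=\str\bigg(\frac{d\wh{\bbb}^E_t}{dt}e^{-\frac{1}{2\pi i}(\wh{\bbb}^E_t)^2}\bigg)+\str\bigg(\frac{d\wh{\bbb}^F_t}{dt}e^{-\frac{1}{2\pi i}(\wh{\bbb}^F_t)^2}\bigg),$$
and integrating over $t\in(0,\infty)$ via (\ref{eq 3.1.16}) yields the desired additivity at the level of differential forms, hence \emph{a fortiori} modulo $\im(d)$. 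I do not expect a serious obstacle here; the argument is essentially careful bookkeeping, and the only mild subtlety is to keep the auxiliary choices (the function $\alpha$ and the Riemannian/differential spin$^c$ structure) uniform across the three superconnections so that the term-by-term decoupling is literal rather than merely asymptotic.
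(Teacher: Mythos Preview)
Your proposal is correct and follows essentially the same route as the paper: define $L_{E\oplus F}=L_E\oplus L_F$ with the direct sum metric and connection, verify the MF property via the block-diagonal structure of $\DD^{S\otimes(E\oplus F)}_+$, and show $\wh{\bbb}^{E\oplus F}_t=\wh{\bbb}^E_t\oplus\wh{\bbb}^F_t$. The only packaging difference is in the last step: the paper phrases the additivity as $\CS(\wh{\bbb}^{E\oplus F}_t,\wh{\bbb}^{E\oplus F}_T)\equiv\CS(\wh{\bbb}^E_t,\wh{\bbb}^E_T)+\CS(\wh{\bbb}^F_t,\wh{\bbb}^F_T)$ via (\ref{eq 2.2.6}) and then lets $t\to 0$, $T\to\infty$, whereas you work directly with the integrand in (\ref{eq 3.1.16}) and use additivity of the supertrace---which in fact yields the slightly stronger conclusion of equality as forms, not merely modulo $\im(d)$.
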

\begin{proof}
Let $K_E\to B$ and $K_F\to B$ be $\Z_2$-graded closed subbundles of $\pi_*E\to B$ and $\pi_*F\to B$ that are complementary to $L_E\to B$ and $L_F\to B$, respectively, i.e.
\begin{equation}\label{eq 4.1.1}
\begin{split}
(\pi_*E)^+=K_E^+\oplus L_E^+,\qquad(\pi_*E)^-=K_E^-\oplus L_E^-,\\
(\pi_*F)^+=K_F^+\oplus L_F^+,\qquad(\pi_*F)^-=K_F^-\oplus L_F^-,
\end{split}
\end{equation}
$\DD^{S\otimes E}_+$ and $\DD^{S\otimes F}_+$ are block diagonal as maps with respect to (\ref{eq 4.1.1}), say
\begin{equation}\label{eq 4.1.2}
\DD^{S\otimes E}_+=\begin{pmatrix} a_E & 0 \\ 0 & d_E \end{pmatrix},\qquad\DD^{S\otimes F}_+=\begin{pmatrix} a_F & 0 \\ 0 & d_F \end{pmatrix},
\end{equation}
and $a_E:K_E^+\to K_E^-$ and $a_F:K_F^+\to K_F^-$ are isomorphisms.

Define a $\Z_2$-graded triple $(L_{E\oplus F}, g^{L_{E\oplus F}}, \nabla^{L_{E\oplus F}})$ by
\begin{equation}\label{eq 4.1.3}
L_{E\oplus F}=L_E\oplus L_F,\qquad g^{L_{E\oplus F}}=g^{L_E}\oplus g^{L_F},\qquad\nabla^{L_{E\oplus F}}=\nabla^{L_E}\oplus\nabla^{L_F}.
\end{equation}
By (\ref{eq 4.1.1}),
\begin{equation}\label{eq 4.1.4}
\begin{split}
(\pi_*(E\oplus F))^+&=(\pi_*E)^+\oplus(\pi_*F)^+=(K_E^+\oplus K_F^+)\oplus(L_E^+\oplus L_F^+),\\
(\pi_*(E\oplus F))^-&=(\pi_*E)^-\oplus(\pi_*F)^-=(K_E^-\oplus K_F^-)\oplus(L_E^-\oplus L_F^-).
\end{split}
\end{equation}
Thus $K_E\oplus K_F\to B$ is complementary to $L_{E\oplus F}\to B$. With respect to (\ref{eq 4.1.4}), $\DD^{S\otimes(E\oplus F)}_+$ is given by
$$\DD^{S\otimes(E\oplus F)}_+=\begin{pmatrix} a_E & 0 & 0 & 0 \\ 0 & a_F & 0 & 0 \\ 0 & 0 & d_E & 0 \\ 0 & 0 & 0 & d_F \end{pmatrix},$$
which is block diagonal as a map with respect to (\ref{eq 4.1.4}), and shows that $\DD^{S\otimes(E\oplus F)}_+|_{K_E^+\oplus K_F^+}=a_E\oplus a_F$ is an isomorphism. Thus $L_{E\oplus F}\to B$ satisfies the MF property for $\DD^{S\otimes(E\oplus F)}$.

We show that the Bismut superconnection $\wh{\bbb}^{E\oplus F}$ is additive in the following sense:
\begin{equation}\label{eq 4.1.5}
\wh{\bbb}^{E\oplus F}=\wh{\bbb}^E\oplus\wh{\bbb}^F.
\end{equation}
With respect to the decomposition
$$\pi_*(E\oplus F)\oplus L_{E\oplus F}^{\op}=(\pi_*E\oplus L_E^{\op})\oplus(\pi_*F\oplus L_F^{\op}),$$
we have
$$\DD^{S\otimes(E\oplus F)}_+=\DD^{S\otimes E}_+\oplus\DD^{S\otimes F}_+,\quad i_{E\oplus F}^-=i_E^-\oplus i_F^-,\quad P_{E\oplus F}^+=P_E^+\oplus P_F^+.$$
Thus $\wt{\DD}^{S\otimes(E\oplus F)}_+(z)=\wt{\DD}^{S\otimes E}_+(z)\oplus\wt{\DD}^{S\otimes F}_+(z)$ for any $z\in\C$, and therefore
\begin{equation}\label{eq 4.1.6}
\wt{\DD}^{S\otimes(E\oplus F)}(z)=\wt{\DD}^{S\otimes E}(z)\oplus\wt{\DD}^{S\otimes F}(z).
\end{equation}
Since $\nabla^{\pi_*(E\oplus F), u}=\nabla^{\pi_*E, u}\oplus\nabla^{\pi_*F, u}$, it follows that
\begin{equation}\label{eq 4.1.7}
\nabla^{\pi_*(E\oplus F), u}\oplus\nabla^{L_{E\oplus F}, \op}=(\nabla^{\pi_*E, u}\oplus\nabla^{L_E, \op})\oplus(\nabla^{\pi_*F, u}\oplus\nabla^{L_F, \op}).
\end{equation}
By (\ref{eq 4.1.6}) and (\ref{eq 4.1.7}), (\ref{eq 4.1.5}) holds.

Let $t, T\in(0, \infty)$ satisfy $t<T$. By (\ref{eq 4.1.5}) and (\ref{eq 2.2.6}),
\begin{equation}\label{eq 4.1.8}
\CS(\wh{\bbb}^{E\oplus F}_t, \wh{\bbb}^{E\oplus F}_T)\equiv\CS(\wh{\bbb}^E_t\oplus\wh{\bbb}^F_t, \wh{\bbb}^E_T\oplus\wh{\bbb}^F_T)\equiv\CS(\wh{\bbb}^E_t, \wh{\bbb}^E_T)+\CS(\wh{\bbb}^F_t, \wh{\bbb}^F_T).
\end{equation}
By letting $t\to 0$ and $T\to\infty$ in (\ref{eq 4.1.8}), the result follows.
\end{proof}

Proposition \ref{prop 4.1} is also valid if not all of the triples $(E, g^E, \nabla^E)$ and $(F, g^F, \nabla^F)$ are $\Z_2$-graded.

The following lemma says for any given $\Z_2$-graded triple $(E, g^E, \nabla^E)$, the $\Z_2$-graded complex vector bundle satisfying the MF property for $\DD^{S\otimes E}$ can be expressed in terms of those for $\DD^{S\otimes E^+}$ and $\DD^{S\otimes E^-}$.
\begin{lemma}\label{lemma 4.1}
Let $(E, g^E, \nabla^E)$ be a $\Z_2$-graded triple over $X$. If $L_{E^+}\to B$ and $L_{E^-}\to B$ are $\Z_2$-graded complex vector bundles satisfying the MF property for $\DD^{S\otimes E^+}$ and $\DD^{S\otimes E^-}$, respectively, then $L_{E^+}\oplus L_{E^-}^{\op}\to B$ is a $\Z_2$-graded complex vector bundle satisfying the MF property for $\DD^{S\otimes E}$.
\end{lemma}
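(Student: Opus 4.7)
The plan is to exploit the $\Z_2$-graded decomposition of $\pi_* E$ induced by the splitting $E = E^+ \oplus E^-$. Using that Clifford multiplication exchanges $S^+$ and $S^-$ while the twisting connection respects $E = E^+\oplus E^-$, I would first establish the identifications
\[
(\pi_* E)^+ = (\pi_* E^+)^+ \oplus (\pi_* E^-)^-, \qquad (\pi_* E)^- = (\pi_* E^+)^- \oplus (\pi_* E^-)^+,
\]
together with the corresponding operator decomposition
\[
\DD^{S\otimes E}_+ = \DD^{S\otimes E^+}_+ \oplus \DD^{S\otimes E^-}_-,
\]
where $\DD^{S\otimes E^-}_- = (\DD^{S\otimes E^-}_+)^*$ denotes the negative-chirality component of $\DD^{S\otimes E^-}$. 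The verification amounts to tracking that $(S \otimes E)^+ = (S^+\otimes E^+) \oplus (S^-\otimes E^-)$ and $(S \otimes E)^- = (S^+\otimes E^-) \oplus (S^-\otimes E^+)$, and that $\DD$ carries each summand to the appropriate target.

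Given the MF data $(K_{E^\pm}^\pm, L_{E^\pm}^\pm)$ for $\DD^{S\otimes E^\pm}$, I would then define candidate MF data for $\DD^{S\otimes E}$ by
\[
L^\pm := L_{E^+}^\pm \oplus L_{E^-}^\mp, \qquad K^\pm := K_{E^+}^\pm \oplus K_{E^-}^\mp.
\]
A direct unpacking of the gradings shows that the $\Z_2$-graded bundle $L^+ \oplus L^-$ is canonically identified with $L_{E^+} \oplus L_{E^-}^{\op}$, and that $K^+ \oplus K^-$ is a closed subbundle of $\pi_* E$ complementary to $L^+ \oplus L^-$ with respect to the decomposition above.

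The final step is to verify block-diagonality of $\DD^{S\otimes E}_+$ with respect to $(\pi_* E)^\pm = K^\pm \oplus L^\pm$ and the isomorphism property $\DD^{S\otimes E}_+|_{K^+}: K^+ \to K^-$. The $\DD^{S\otimes E^+}_+$ contribution is immediate from the MF property for $L_{E^+}$. The only subtle point, and the main obstacle, is the $\DD^{S\otimes E^-}_-$ contribution: one needs block-diagonality of the adjoint of $\DD^{S\otimes E^-}_+$ and the iso property for $\DD^{S\otimes E^-}_-|_{K_{E^-}^-}: K_{E^-}^- \to K_{E^-}^+$. I would handle this by taking $K_{E^-}^\pm$ to be the $L^2$-orthogonal complements of $L_{E^-}^\pm$, a standard and permissible choice within the MF construction of \cite[Lemma 7.13]{FL10}; under this choice, the adjoint of a block-diagonal operator is again block diagonal, and $\DD^{S\otimes E^-}_-|_{K_{E^-}^-}$ is the adjoint of the finite-rank isomorphism $\DD^{S\otimes E^-}_+|_{K_{E^-}^+}$, hence an isomorphism.
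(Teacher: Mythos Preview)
Your proposal is correct and takes essentially the same approach as the paper: identify $(\pi_*E)^\pm$ with $(\pi_*E^+)^\pm \oplus (\pi_*E^-)^\mp$, observe that $\DD^{S\otimes E}_+ = \DD^{S\otimes E^+}_+ \oplus (\DD^{S\otimes E^-}_+)^*$ with respect to this splitting, and verify the MF conditions directly from those for $\DD^{S\otimes E^\pm}$. One minor slip: the bundles $K_{E^-}^\pm$ are infinite-rank (they are complements of the finite-rank $L_{E^-}^\pm$), so ``finite-rank isomorphism'' should simply read ``isomorphism'' --- but the conclusion that its adjoint is again an isomorphism remains valid, and your explicit attention to the orthogonality of the complement (which the paper uses tacitly when writing $(\DD^{S\otimes E^-}_+)^*$ in block-diagonal form) is well placed.
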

\begin{proof}
Let $K_{E^+}\to B$ and $K_{E^-}\to B$ be $\Z_2$-graded closed subbundles of $\pi_*E^+\to B$ and $\pi_*E^-\to B$ that are complementary to $L_{E^+}\to B$ and $L_{E^-}\to B$, respectively, i.e.
\begin{equation}\label{eq 4.1.9}
\begin{split}
(\pi_*E^+)^+&=K_{E^+}^+\oplus L_{E^+}^+,\qquad(\pi_*E^+)^-=K_{E^+}^-\oplus L_{E^+}^-,\\
(\pi_*E^-)^+&=K_{E^-}^+\oplus L_{E^-}^+,\qquad(\pi_*E^-)^-=K_{E^-}^-\oplus L_{E^-}^-,
\end{split}
\end{equation}
$\DD^{S\otimes E^+}_+$ and $\DD^{S\otimes E^-}_+$ are block diagonal as maps with respect to (\ref{eq 4.1.9}), say
$$\DD^{S\otimes E^+}_+=\begin{pmatrix} a_{E^+} & 0 \\ 0 & d_{E^+} \end{pmatrix},\qquad\DD^{S\otimes E^-}_+=\begin{pmatrix} a_{E^-} & 0 \\ 0 & d_{E^-} \end{pmatrix},$$
and $a_{E^+}:K_{E^+}^+\to K_{E^+}^-$ and $a_{E^-}:K_{E^-}^+\to K_{E^-}^-$ are isomorphisms.

By (\ref{eq 4.1.9}),
\begin{equation}\label{eq 4.1.10}
\begin{split}
(\pi_*E)^+&=(\pi_*E^+)^+\oplus(\pi_*E^-)^-\\
&=K_{E^+}^+\oplus L_{E^+}^+\oplus  K_{E^-}^-\oplus L_{E^-}^-\\
&=(K_{E^+}\oplus K_{E^-}^{\op})^+\oplus(L_{E^+}\oplus L_{E^-}^{\op})^+
\end{split}
\end{equation}
and
\begin{equation}\label{eq 4.1.11}
\begin{split}
(\pi_*E)^-&=(\pi_*E^+)^-\oplus(\pi_*E^-)^+\\
&=K_{E^+}^-\oplus L_{E^+}^-\oplus K_{E^-}^+\oplus L_{E^-}^+\\
&=(K_{E^+}\oplus K_{E^-}^{\op})^-\oplus(L_{E^+}\oplus L_{E^-}^{\op})^-.
\end{split}
\end{equation}
By (\ref{eq 4.1.10}) and (\ref{eq 4.1.11}), $\pi_*E=(K_{E^+}\oplus K_{E^-}^{\op})\oplus(L_{E^+}\oplus L_{E^-}^{\op})$. With respect to (\ref{eq 4.1.10}) and (\ref{eq 4.1.11}), $\DD^{S\otimes E}_+$ is given by
$$\DD^{S\otimes E}_+=\begin{pmatrix} a_{E^+} & 0 & 0 & 0 \\ 0 & a_{E^-}^* & 0 & 0 \\ 0 & 0 & d_{E^+} & 0 \\ 0 & 0 & 0 & d_{E^-}^* \end{pmatrix},$$
where $\DD^{S\otimes E^-}_-=(\DD^{S\otimes E^-}_+)^*=\begin{pmatrix} a_{E^-}^* & 0 \\ 0 & d_{E^-}^* \end{pmatrix}$. Note that $\DD^{S\otimes E}_+|_{(K_{E^+}\oplus K_{E^-}^{\op})^+}=a_{E^+}\oplus a_{E^-}^*$, which is an isomorphism. Thus $L_{E^+}\oplus L_{E^-}^{\op}\to B$ satisfies the MF property for $\DD^{S\otimes E}$.
\end{proof}

Let $E\to X$ be a $\Z_2$-graded complex vector bundle. Since $\End(E)^+=\End(E^+)\oplus\End(E^-)$, it follows that
$$\End(E^{\op})^+=\End(E^{\op, +})\oplus\End(E^{\op, -})=\End(E^-)\oplus\End(E^+)=\End(E)^+.$$
Thus $T\in\End(E^{\op})^+$ if and only if $T\in\End(E)^+$. Moreover, $\str_{E^{\op}}(T)$ exists if and only if $\str_E(T)$ exists. If either one of them exists, then
\begin{equation}\label{eq 4.1.12}
\str_{E^{\op}}(T)=\tr(T|_{E^{\op, +}})-\tr(T|_{E^{\op, -}})=\tr(T|_{E^-})-\tr(T|_{E^+})=-\str_E(T).
\end{equation}

The following lemma says switching the $\Z_2$-grading of a $\Z_2$-graded triple induces a minus sign in the corresponding Bismut--Cheeger eta form.
\begin{lemma}\label{lemma 4.2}
Let $(E, g^E, \nabla^E)$ be a $\Z_2$-graded triple over $X$. If $(L, g^L, \nabla^L)$ is a $\Z_2$-graded triple so that $L\to B$ satisfies the MF property for $\DD^{S\otimes E}$, then $(L^{\op}, g^{L, \op}, \nabla^{L, \op})$ is a $\Z_2$-graded triple so that $L^{\op}\to B$ satisfies the MF property for $\DD^{S\otimes E^{\op}}$, and
\begin{equation}\label{eq 4.1.13}
\wh{\eta}^{E^{\op}}(g^{E, \op}, \nabla^{E, \op}, L^{\op})\equiv-\wh{\eta}^E(g^E, \nabla^E, L).
\end{equation}
\end{lemma}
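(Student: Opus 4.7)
The plan is to identify the rescaled Bismut superconnections $\wh{\bbb}^{E^{\op}}_t$ and $\wh{\bbb}^E_t$ as the same operator on a common underlying complex vector bundle, and then apply the supertrace identity (\ref{eq 4.1.12}) to the integrand of (\ref{eq 3.1.16}) to extract the minus sign in (\ref{eq 4.1.13}).

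First I would verify that $L^{\op}\to B$ satisfies the MF property for $\DD^{S\otimes E^{\op}}$. If $K\to B$ is a $\Z_2$-graded closed complementary subbundle to $L\to B$ realizing the MF property for $\DD^{S\otimes E}$, then, since $(\pi_*E^{\op})^\pm=(\pi_*E)^\mp$, the subbundles $K^{\op}$ and $L^{\op}$ give a $\Z_2$-graded decomposition of $\pi_*E^{\op}$, and the restriction $\DD^{S\otimes E^{\op}}_+|_{K^{\op,+}}$ is identified with $(\DD^{S\otimes E}_+|_{K^+})^*:K^-\to K^+$, which is an isomorphism because $\DD^{S\otimes E}_+|_{K^+}$ is. The induced triple on $L^{\op}$ is $(L^{\op},g^{L,\op},\nabla^{L,\op})$ as constructed in \S\ref{s 3.1}.

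Next I would observe that, under the identifications $(\pi_*E^{\op})^\pm=(\pi_*E)^\mp$ and $(L^{\op})^{\op}=L$, the inclusion $i^-_{E^{\op}}:L^{\op,-}\to(\pi_*E^{\op})^-$ becomes the inclusion $L^+\hookrightarrow(\pi_*E)^+$, and the projection $P^+_{E^{\op}}:(\pi_*E^{\op})^+\to L^{\op,+}$ becomes the projection $(\pi_*E)^-\to L^-$. Using that the MF subbundles $L^\pm$ can be taken orthogonal to $K^\pm$ with respect to $g^{\pi_*E}$, a direct computation with (\ref{eq 3.1.12}) shows $\wt{\DD}^{S\otimes E^{\op}}(z)=\wt{\DD}^{S\otimes E}(z)$ as self-adjoint operators on the common underlying complex vector bundle $\pi_*E\oplus L$. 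Likewise $\nabla^{\pi_*E^{\op},u}\oplus\nabla^{L^{\op},\op}=\nabla^{\pi_*E,u}\oplus\nabla^L$ agrees with $\nabla^{\pi_*E,u}\oplus\nabla^{L,\op}$ as a connection on the underlying bundle, and the Clifford term $c(T)/4$ is unchanged. Therefore $\wh{\bbb}^{E^{\op}}_t=\wh{\bbb}^E_t$ as operators on the underlying complex vector bundle.

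Finally, since $\pi_*E^{\op}\oplus L=(\pi_*E\oplus L^{\op})^{\op}$ as $\Z_2$-graded bundles, applying (\ref{eq 4.1.12}) fiberwise yields
\begin{displaymath}
\str\bigg(\frac{d\wh{\bbb}^{E^{\op}}_t}{dt}e^{-\frac{1}{2\pi i}(\wh{\bbb}^{E^{\op}}_t)^2}\bigg)=-\str\bigg(\frac{d\wh{\bbb}^E_t}{dt}e^{-\frac{1}{2\pi i}(\wh{\bbb}^E_t)^2}\bigg),
\end{displaymath}
and integrating from $0$ to $\infty$ as in (\ref{eq 3.1.16}) produces (\ref{eq 4.1.13}). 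The main obstacle is the bookkeeping in the second step: confirming that the inclusion--projection pairs $(i^-_{E^{\op}},P^+_{E^{\op}})$ and $(i^-_E,P^+_E)$ are interchanged with their $L^2$-adjoints under the grading flip, and that this orthogonality is compatible with the MF construction. Once $\wh{\bbb}^{E^{\op}}_t=\wh{\bbb}^E_t$ is established on the common underlying bundle, the sign reversal follows immediately from the grading-reversal identity for the supertrace.
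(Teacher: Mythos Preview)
Your approach is essentially the same as the paper's, and in fact more explicit: the paper also verifies the MF property for $\DD^{S\otimes E^{\op}}$ via $K^{\op}\oplus L^{\op}$, notes that $g^{L,\op}$ and $\nabla^{L,\op}$ are the induced data on $L^{\op}$, and then simply says ``apply (\ref{eq 4.1.12}) to $\dfrac{d\wh{\bbb}^{E^{\op}}_t}{dt}e^{-\frac{1}{2\pi i}(\wh{\bbb}^{E^{\op}}_t)^2}$''. Your identification of $\wh{\bbb}^{E^{\op}}_t$ with $\wh{\bbb}^E_t$ as operators on the common underlying bundle, together with your observation that orthogonality of the MF decomposition is what makes $(P^+)^*=i^+$ and $(i^-)^*=P^-$ (hence $\wt{\DD}^{S\otimes E^{\op}}(z)=\wt{\DD}^{S\otimes E}(z)$), spells out exactly the bookkeeping the paper's terse proof leaves implicit.
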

\begin{proof}
Let $K\to B$ be a $\Z_2$-graded closed subbundle of $\pi_*E\to B$ that is complementary to $L\to B$, i.e.
\begin{equation}\label{eq 4.1.14}
(\pi_*E)^+=K^+\oplus L^+,\qquad(\pi_*E)^-=K^-\oplus L^-,
\end{equation}
$\DD^{S\otimes E}_+$ is block diagonal as a map with respect to (\ref{eq 4.1.14}) and $\DD^{S\otimes E}|_{K^+}$ is an isomorphism. Since
\begin{displaymath}
\begin{split}
(S(T^VX)\otimes E^{\op})^+&=(S(T^VX)^+\otimes E^-)\oplus(S(T^VX)^-\otimes E^+)=(S(T^VX)\otimes E)^-,\\
(S(T^VX)\otimes E^{\op})^-&=(S(T^VX)^+\otimes E^+)\oplus(S(T^VX)^-\otimes E^-)=(S(T^VX)\otimes E)^+,
\end{split}
\end{displaymath}
it follows that
\begin{equation}\label{eq 4.1.15}
\begin{split}
(\pi_*(E^{\op}))^+&=(\pi_*E)^-=K^-\oplus L^-,\\
(\pi_*(E^{\op}))^-&=(\pi_*E)^+=K^+\oplus L^+.
\end{split}
\end{equation}
That is, $\pi_*(E^{\op})=K^{\op}\oplus L^{\op}$. Moreover, $\DD^{S\otimes E^{\op}}_+=\DD^{S\otimes E}_-=(\DD^{S\otimes E}_+)^*$. Thus $\DD^{S\otimes E^{\op}}_+$ is also block diagonal as a map with respect to (\ref{eq 4.1.15}), and $\DD^{S\otimes E^{\op}}_+|_{K^{\op, +}}$ is the inverse of $\DD^{S\otimes E}_+|_{K^+}$, which is an isomorphism. Thus $L^{\op}\to B$ satisfies the MF property for $\DD^{S\otimes E^{\op}}$.

Note that $g^{L, \op}$ is inherited from $g^{\pi_*(E^{\op})}$ and $\nabla^{L, \op}=P^{\op}\circ\nabla^{\pi_*(E^{\op}), u}\circ P^{\op}$, where $P^{\op}:\pi_*(E^{\op})\to L^{\op}$ is the obvious $\Z_2$-graded projection map. By applying (\ref{eq 4.1.12}) to \dis{\frac{d\wh{\bbb}^{E^{\op}}_t}{dt}e^{-\frac{1}{2\pi i}(\wh{\bbb}^{E^{\op}}_t)^2}}, (\ref{eq 4.1.13}) holds.
\end{proof}
\begin{prop}\label{prop 4.2}
Let $(V, g^V, \nabla^V)$ be a balanced $\Z_2$-graded triple over $X$. If $(L_+, g^{L_+}, \nabla^{L_+})$ is a $\Z_2$-graded triple so that $L_+\to B$ satisfies the MF property for $\DD^{S\otimes V^+}$, then there exists a balanced $\Z_2$-graded triple $(L, g^L, \nabla^L)$ such that $L\to B$ satisfies the MF property for $\DD^{S\otimes V}$ and
$$\wh{\eta}^V(g^V, \nabla^V, L)\equiv 0.$$
\end{prop}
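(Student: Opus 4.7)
The plan is to exploit the natural grading-reversing symmetry carried by every balanced $\Z_2$-graded triple: I will produce an involution of $\pi_*V\oplus L^{\op}$ that commutes with the rescaled Bismut superconnection $\wh{\bbb}^V_t$ but anticommutes with the $\Z_2$-grading operator, forcing the supertrace integrand of (\ref{eq 3.1.16}) to vanish identically.

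First I would fix the choice of $L$. Since $V^+=V^-$ with matching metric and connection, $\DD^{S\otimes V^+}=\DD^{S\otimes V^-}$, so $L_+\to B$ is an MF bundle for both operators. Lemma \ref{lemma 4.1} (with $L_{V^+}=L_{V^-}=L_+$) then yields $L:=L_+\oplus L_+^{\op}$ satisfying the MF property for $\DD^{S\otimes V}$. Equipped with the induced $g^L$ and $\nabla^L$, this $L$ is balanced in the sense of \S\ref{s 2.1}: the decompositions $L^+ = L_+^+\oplus L_+^-$ and $L^- = L_+^-\oplus L_+^+$ coincide as Hermitian bundles with unitary connection after the canonical transposition of summands.

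Next, define $\sigma:V\to V$ to be the swap interchanging the two summands of $V=V^+\oplus V^-$. The balanced hypothesis gives $\sigma^*g^V=g^V$ and $\sigma^*\nabla^V=\nabla^V$, and $\sigma$ is grading-reversing by construction. Its tensor lift $\wt{\sigma}:=\id_{S(T^VX)}\otimes\sigma$ is then a grading-reversing isometric involution of $\pi_*V$ that commutes with $\DD^{S\otimes V}$ (both Clifford multiplication and $\nabla^V$ are $\sigma$-equivariant), preserves $\nabla^{\pi_*V,u}$ by (\ref{eq 3.1.6})--(\ref{eq 3.1.7}), and trivially commutes with $c(T)$. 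The crucial observation is that the MF decomposition underlying Lemma \ref{lemma 4.1} is $\wt{\sigma}$-symmetric: under the identifications $(\pi_*V)^+=(\pi_*V^+)^+\oplus(\pi_*V^-)^-$ and $(\pi_*V)^-=(\pi_*V^+)^-\oplus(\pi_*V^-)^+$, the involution $\wt{\sigma}$ acts as the swap of the two $\pi_*W$-summands (with $W:=V^\pm$), hence preserves both $L$ and its complement $K=K_+\oplus K_+^{\op}$ while exchanging $L^\pm\leftrightarrow L^\mp$ and $K^\pm\leftrightarrow K^\mp$. Consequently $\wt{\sigma}$ intertwines the inclusions $i^\pm$ and projections $P^\pm$ of (\ref{eq 3.1.12}) and the restricted connection $\nabla^L$ of (\ref{eq 3.1.11}), so $\Sigma:=\wt{\sigma}\oplus\wt{\sigma}|_{L^{\op}}$ extends to a grading-reversing isometric involution of $\pi_*V\oplus L^{\op}$ commuting with every summand of $\wh{\bbb}^V_t$.

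The conclusion is the standard supertrace/involution trick. Writing $\Gamma$ for the $\Z_2$-grading operator on $\pi_*V\oplus L^{\op}$ and $A_t:=\frac{d\wh{\bbb}^V_t}{dt}e^{-\frac{1}{2\pi i}(\wh{\bbb}^V_t)^2}$, the identity $\Sigma\Gamma\Sigma^{-1}=-\Gamma$ together with $[\Sigma,A_t]=0$ and cyclicity of the trace yields
\[
\str(A_t)=\tr(\Gamma A_t)=\tr\bigl(\Sigma\Gamma\Sigma^{-1}\cdot\Sigma A_t\Sigma^{-1}\bigr)=-\tr(\Gamma A_t)=-\str(A_t),
\]
so $\str(A_t)=0$ for every $t\in(0,\infty)$. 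Integrating against $\frac{1}{\sqrt{\pi}}\,dt$ then gives $\wh{\eta}^V(g^V,\nabla^V,L)=0$ exactly, which is in particular $\equiv 0$ in $\Omega^{\odd}(B)/\im(d)$. I expect the main obstacle to be the bookkeeping in the previous paragraph, namely checking that the specific MF decomposition produced by Lemma \ref{lemma 4.1} really is $\wt{\sigma}$-invariant and then tracing that invariance through each ingredient of $\wh{\bbb}^V_t$; once that setup is in place, the supertrace cancellation is purely formal and is in fact stronger than the stated conclusion.
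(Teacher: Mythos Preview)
Your proof is correct, and at its core it exploits the same grading-reversal symmetry as the paper, but the two arguments are organized differently. The paper first isolates the general fact (Lemma~\ref{lemma 4.2}) that passing from $E$ to $E^{\op}$ flips the sign of the eta form via $\str_{E^{\op}}=-\str_E$, and then observes that for balanced $V$ and the choice $L=L_+\oplus L_+^{\op}$ one has $(V^{\op},L^{\op})=(V,L)$, so $\wh{\eta}^V\equiv-\wh{\eta}^V$. You instead realize this symmetry concretely as the swap involution $\Sigma$ on $\pi_*V\oplus L^{\op}$, verify directly that $\Sigma$ commutes with every term of $\wh{\bbb}^V_t$ (including the off-diagonal pieces $i^\pm$, $P^\pm$ of $\wt{\DD}^{S\otimes V}(z)$) while anticommuting with the grading operator, and deduce that the supertrace integrand vanishes for every $t$. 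Your route buys a slightly sharper conclusion (the eta form is zero on the nose, not merely modulo exact forms) and is self-contained, not relying on Lemma~\ref{lemma 4.2}; the paper's route is shorter once that lemma is available and hides the MF-decomposition bookkeeping you flagged as the main obstacle.
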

\begin{proof}
Since the $\Z_2$-graded triple $(V, g^V, \nabla^V)$ is balanced, $(V^+, g^{V, +}, \nabla^{V, +})=(V^-, g^{V, -}, \nabla^{V, -})$. Thus $L_+\to B$ also satisfies the MF property for $\DD^{S\otimes V^-}$. Define a $\Z_2$-graded triple $(L, g^L, \nabla^L)$ by
$$L=L_+\oplus L_+^{\op},\qquad g^L=g^{L_+}\oplus g^{L_+, \op},\qquad\nabla^L=\nabla^{L_+}\oplus\nabla^{L_+, \op}.$$
By Lemma \ref{lemma 4.1}, $L\to B$ satisfies the MF property for $\DD^{S\otimes V}$. Since
$$L^+=L_+^+\oplus L_+^-\qquad\textrm{ and }\qquad L^-=L_+^-\oplus L_+^+,$$
and similarly for $g^L$ and $\nabla^L$, the $\Z_2$-graded triple $(L, g^L, \nabla^L)$ is balanced. Since $(V^{\op}, g^{V, \op}, \nabla^{V, \op})=(V, g^V, \nabla^V)$, it follows from Lemma \ref{lemma 4.2} and the fact $(L^{\op}, g^{L, \op}, \nabla^{L, \op})=(L, g^L, \nabla^L)$ that
$$\wh{\eta}^V(g^V, \nabla^V, L)\equiv-\wh{\eta}^{V^{\op}}(g^{V, \op}, \nabla^{V, \op}, L^{\op})\equiv-\wh{\eta}^V(g^V, \nabla^V, L).$$
Thus $\wh{\eta}^V(g^V, \nabla^V, L)\equiv 0$.
\end{proof}

We now prove Theorem \ref{thm 1.2}.
\begin{thm}\label{thm 4.1}
Let $(E, g^E, \nabla^E)$ be a $\Z_2$-graded triple over $X$. If $(L_{E^+}, g^{L_{E^+}}, \nabla^{L_{E_+}})$ and $(L_{E^-}, g^{L_{E^-}}, \nabla^{L_{E^-}})$ are $\Z_2$-graded triples so that $L_{E^+}\to B$ and $L_{E^-}\to B$ satisfy the MF property for $\DD^{S\otimes E^+}$ and $\DD^{S\otimes E^-}$, respectively, then
\begin{equation}\label{eq 4.1.16}
\wh{\eta}^E(g^E, \nabla^E, L_{E^+}\oplus L_{E^-}^{\op})\equiv\wh{\eta}^{E^+}(g^{E, +}, \nabla^{E, +}, L_{E^+})-\wh{\eta}^{E^-}(g^{E, -}, \nabla^{E, -}, L_{E^-}).
\end{equation}
\end{thm}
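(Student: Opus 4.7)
The plan is to identify the $\Z_2$-graded triple $(E, g^E, \nabla^E)$ with the direct sum $(E^+, g^{E,+}, \nabla^{E,+}) \oplus ((E^-)^{\op}, g^{E,-,\op}, \nabla^{E,-,\op})$, where $E^+$ is viewed as a $\Z_2$-graded triple concentrated in even degree and $(E^-)^{\op}$ is the opposite-graded counterpart of the ungraded Hermitian bundle $(E^-, g^{E,-}, \nabla^{E,-})$, hence concentrated in odd degree. A direct check at the level of grading operators shows that $S(T^VX)\otimes E$ and $S(T^VX)\otimes(E^+\oplus(E^-)^{\op})$ coincide as $\Z_2$-graded Hermitian bundles with unitary connection, so that their Bismut superconnections and associated Bismut--Cheeger eta forms agree.

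Granted this identification, I would first invoke Lemma \ref{lemma 4.2} applied to the $\Z_2$-graded triple $(E^-, g^{E,-}, \nabla^{E,-})$ placed in even degree: this yields $L_{E^-}^{\op}\to B$ as a $\Z_2$-graded vector bundle satisfying the MF property for $\DD^{S\otimes(E^-)^{\op}}$, together with the identity
$$\wh{\eta}^{(E^-)^{\op}}(g^{E,-,\op}, \nabla^{E,-,\op}, L_{E^-}^{\op})\equiv -\wh{\eta}^{E^-}(g^{E,-}, \nabla^{E,-}, L_{E^-}).$$
Then I would apply Proposition \ref{prop 4.1} to the pair $(E^+, g^{E,+}, \nabla^{E,+})$ and $((E^-)^{\op}, g^{E,-,\op}, \nabla^{E,-,\op})$ (using the remark after Proposition \ref{prop 4.1} that it remains valid when not all summands are $\Z_2$-graded in the strict sense), obtaining $L_{E^+}\oplus L_{E^-}^{\op}$ as an MF bundle for $\DD^{S\otimes E}$ (consistent with Lemma \ref{lemma 4.1}) and the additivity identity
$$\wh{\eta}^E(g^E, \nabla^E, L_{E^+}\oplus L_{E^-}^{\op}) \equiv \wh{\eta}^{E^+}(g^{E,+}, \nabla^{E,+}, L_{E^+}) + \wh{\eta}^{(E^-)^{\op}}(g^{E,-,\op}, \nabla^{E,-,\op}, L_{E^-}^{\op}).$$
Substituting the identity obtained from Lemma \ref{lemma 4.2} into this produces (\ref{eq 4.1.16}).

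The main obstacle is the careful bookkeeping of $\Z_2$-gradings in the initial identification: one must verify that the grading operator $\epsilon_{S(T^VX)}\otimes\epsilon_E$ on $S(T^VX)\otimes E$ coincides with that on $S(T^VX)\otimes(E^+\oplus(E^-)^{\op})$. The key observation is that the opposite-grading operation on $E^-$ interchanges the summands $S(T^VX)^+\otimes E^-$ and $S(T^VX)^-\otimes E^-$ inside $\pi_*E^-$, which precisely matches the contribution of $E^-$ to $\pi_*E$. Once this is established, the Bismut superconnection respects the decomposition, and the proof reduces to direct applications of the tools already developed in \S\ref{s 4.1}, with no further analytic input required.
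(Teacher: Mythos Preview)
Your argument is correct, and it is genuinely more direct than the paper's. You exploit the tautological decomposition $E = E^+ \oplus (E^-)^{\op}$ of $\Z_2$-graded bundles, then invoke Lemma~\ref{lemma 4.2} to pass from $(E^-)^{\op}$ to $E^-$ with a sign, and Proposition~\ref{prop 4.1} to split the eta form along the direct sum. The only point requiring care is the one you flag: that the Bismut superconnection $\wh{\bbb}^E$ literally agrees with $\wh{\bbb}^{E^+}\oplus\wh{\bbb}^{(E^-)^{\op}}$ under the identification $\pi_*E\oplus(L_{E^+}\oplus L_{E^-}^{\op})^{\op}=(\pi_*E^+\oplus L_{E^+}^{\op})\oplus(\pi_*(E^-)^{\op}\oplus L_{E^-})$, and this is indeed a routine check of gradings exactly parallel to the computations in Lemmas~\ref{lemma 4.1} and~\ref{lemma 4.2}.

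The paper instead argues by \emph{stabilization}: it adds an ungraded copy of $E^-$ to both sides, producing $E\oplus E^-$ on the left and $E^+\oplus H$ on the right (with $H$ the balanced $\Z_2$-graded bundle $H^\pm=E^-$), and then uses the swap isomorphism $\alpha((a,b),c)=(a,(c,b))$ together with Proposition~\ref{prop 3.2} to transport the eta form across $\alpha$. Proposition~\ref{prop 4.1} then splits both sides, and Proposition~\ref{prop 4.2} kills the balanced contribution $\wh{\eta}^H$. Thus the paper invokes Propositions~\ref{prop 3.2}, \ref{prop 4.1}, and~\ref{prop 4.2}, whereas your route needs only Lemma~\ref{lemma 4.2} and Proposition~\ref{prop 4.1}. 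Your proof is shorter and avoids the detour through isomorphic bundles; the paper's route has the virtue of exhibiting Theorem~\ref{thm 4.1} as a direct application of Proposition~\ref{prop 3.2}, one of the main new tools of the paper, but at the cost of an auxiliary construction that your argument shows is not actually needed.
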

\begin{proof}
By Lemma \ref{lemma 4.1}, $L_{E^+}\oplus L_{E^-}^{\op}\to B$ satisfies the MF property for $\DD^{S\otimes E}$. Thus the left-hand side of (\ref{eq 4.1.16}) makes sense.

Consider the (ungraded) triple $(E\oplus E^-, g^E\oplus g^{E, -}, \nabla^E\oplus\nabla^{E, -})$. By Proposition \ref{prop 4.1},
$$\big((L_{E^+}\oplus L_{E^-}^{\op})\oplus L_{E^-}, (g^{L_{E^+}}\oplus g^{L_{E^-}, \op})\oplus g^{L_{E^-}}, (\nabla^{L_{E^+}}\oplus\nabla^{L_{E^-}, \op})\oplus\nabla^{L_{E^-}}\big)$$
is a $\Z_2$-graded triple so that $(L_{E^+}\oplus L_{E^-}^{\op})\oplus L_{E^-}\to B$ satisfies the MF property for $\DD^{S\otimes(E\oplus E^-)}$, which is defined in terms of $\nabla^E\oplus\nabla^{E, -}$.

On the other hand, define a balanced $\Z_2$-graded triple $(H, g^H, \nabla^H)$ by $H^\pm:=E^-$, and similarly for $g^H$ and $\nabla^H$. Consider the (ungraded) triple $(E^+\oplus H, g^{E, +}\oplus g^H, \nabla^{E, +}\oplus\nabla^H)$. The map
$$\alpha:(E\oplus E^-, g^E\oplus g^{E, -})\to (E^+\oplus H, g^{E, +}\oplus g^H)$$
defined by
$$\alpha((a, b), c)=(a, (c, b))$$
is obviously an isometric isomorphism and satisfies
\begin{equation}\label{eq 4.1.17}
\alpha^*(\nabla^{E, +}\oplus\nabla^H)=\nabla^E\oplus\nabla^{E, -}.
\end{equation}
By applying Proposition \ref{prop 3.2} to the isometric isomorphism $\alpha$ and noting that $(L_{E^+}\oplus L_{E^-}^{\op})\oplus L_{E^-}\to B$ satisfies the MF property for $\DD^{S\otimes(E\oplus E^-)}$, which is defined in terms of $\nabla^E\oplus\nabla^{E, -}=\alpha^*(\nabla^{E, +}\oplus\nabla^H)$ by (\ref{eq 4.1.17}), there exists a unique $\Z_2$-graded triple $(L_\alpha, g^{L_\alpha}, \nabla^{L_\alpha})$ and a unique $\Z_2$-graded isometric isomorphism
$$\wt{\alpha}^L:\big((L_{E^+}\oplus L_{E^-}^{\op})\oplus L_{E^-}, (g^{L_{E^+}}\oplus g^{L_{E^-}, \op})\oplus g^{L_{E^-}}\big)\to(L_\alpha, g^{L_\alpha})$$
such that $L_\alpha\to B$ satisfies the MF property for $\DD^{S\otimes(E^+\oplus H)}$, which is defined in terms of $\nabla^{E, +}\oplus\nabla^H$,  and
\begin{equation}\label{eq 4.1.18}
\begin{split}
&\wh{\eta}^{E\oplus E^-}(g^E\oplus g^{E, -}, \alpha^*(\nabla^{E, +}\oplus\nabla^H), (L_{E^+}\oplus L_{E^-}^{\op})\oplus L_{E^-})\\
&=\wh{\eta}^{E^+\oplus H}(g^{E, +}\oplus g^H, \nabla^{E, +}\oplus\nabla^H, L_\alpha).
\end{split}
\end{equation}
By the definition of $\alpha$, the $\Z_2$-graded triple $(L_\alpha, g^{L_\alpha}, \nabla^{L_\alpha})$ is given by
\begin{equation}\label{eq 4.1.19}
\big(L_{E^+}\oplus(L_{E^-}\oplus L_{E^-}^{\op}), g^{L_{E^+}}\oplus(g^{L_{E^-}}\oplus g^{L_{E^-}, \op}), \nabla^{L_{E^+}}\oplus(\nabla^{L_{E^-}}\oplus\nabla^{L_{E^-}, \op})\big).
\end{equation}
By (\ref{eq 4.1.17}) and (\ref{eq 4.1.19}), (\ref{eq 4.1.18}) becomes
\begin{equation}\label{eq 4.1.20}
\begin{split}
&\wh{\eta}^{E\oplus E^-}(g^E\oplus g^{E, -}, \nabla^E\oplus\nabla^{E, -}, (L_{E^+}\oplus L_{E^-}^{\op})\oplus L_{E^-})\\
&=\wh{\eta}^{E^+\oplus H}(g^{E, +}\oplus g^H, \nabla^{E, +}\oplus\nabla^H, L_{E^+}\oplus(L_{E^-}\oplus L_{E^-}^{\op})).
\end{split}
\end{equation}
By Propositions \ref{prop 4.1} and \ref{prop 4.2}, (\ref{eq 4.1.20}) becomes
\begin{displaymath}
\begin{split}
&\wh{\eta}^E(g^E, \nabla^E, L_{E^+}\oplus L_{E^-}^{\op})+\wh{\eta}^{E^-}(g^{E, -}, \nabla^{E, -}, L_{E^-})\\
&\equiv\wh{\eta}^{E\oplus E^-}(g^E\oplus g^{E, -}, \nabla^E\oplus\nabla^{E, -}, (L_{E^+}\oplus L_{E^-}^{\op})\oplus L_{E^-})\\
&=\wh{\eta}^{E^+\oplus H}(g^{E, +}\oplus g^H, \nabla^{E, +}\oplus\nabla^H, L_{E^+}\oplus(L_{E^-}\oplus L_{E^-}^{\op}))\\
&\equiv\wh{\eta}^{E^+}(g^{E, +}, \nabla^{E, +}, L_{E^+})+\wh{\eta}^H(g^H, \nabla^H, L_{E^-}\oplus L_{E^-}^{\op})\\
&\equiv\wh{\eta}^{E^+}(g^{E, +}, \nabla^{E, +}, L_{E^+}).
\end{split}
\end{displaymath}
Thus (\ref{eq 4.1.16}) holds.
\end{proof}

\subsection{The analytic index in differential and $\R/\Z$ $K$-theory}\label{s 4.2}

In this subsection we prove that the analytic index in differential $K$-theory $\ind^a_{\wh{K}}:\wh{K}_{\FL}(X)\to\wh{K}_{\FL}(B)$ is a well defined group homomorphism (Proposition \ref{prop 1.2}).

\begin{remark}
Here is a remark on representing elements in the topological $K$-group $K(X)$ using $\Z_2$-graded complex vector bundles. First note that every $\Z_2$-graded complex vector bundle $E\to X$ represents the element $[E^+]-[E^-]$ in $K(X)$. Conversely, given an element in $K(X)$ written as a formal difference $[E]-[F]$, by defining a $\Z_2$-graded complex vector bundle $H\to X$ by $H^+:=E$ and $H^-:=F$, we see that $H\to X$ represents $[E]-[F]$.

By \cite[p.289]{BGV}, two $\Z_2$-graded complex vector bundles $E\to X$ and $F\to X$ represent the same element in $K(X)$ if there exist complex vector bundles $G\to X$ and $H\to X$ such that
$$E^+\oplus G\cong F^+\oplus H,\qquad E^-\oplus G\cong F^-\oplus H.$$
In other words, $E\to X$ and $F\to X$ represent the same element in $K(X)$ if there exist $\Z_2$-graded complex vector bundles $\wh{G}\to X$ and $\wh{H}\to X$ of the form $\wh{G}^+=\wh{G}^-$ and $\wh{H}^+=\wh{H}^-$ such that $E\oplus\wh{G}\cong F\oplus\wh{H}$ as $\Z_2$-graded complex vector bundles.
\end{remark}

The Freed--Lott differential $K$-group $\wh{K}_{\FL}(X)$ \cite[Definition 2.16]{FL10} can be described in terms of $\Z_2$-graded generators of the form $\E=(E, g^E, \nabla^E, \omega)$, where $(E, g^E, \nabla^E)$ is a $\Z_2$-graded triple and \dis{\omega\in\frac{\Omega^{\odd}(X)}{\im(d)}}. Two $\Z_2$-graded generators $\E$ and $\F$ are equal in $\wh{K}_{\FL}(X)$ if and if only there exist balanced $\Z_2$-graded triples $(V_E, g^{V_E}, \nabla^{V_E})$ and $(V_F, g^{V_F}, \nabla^{V_F})$ and a $\Z_2$-graded isometric isomorphism
$\alpha:(E\oplus V_E, g^E\oplus g^{V_E})\to(F\oplus V_F, g^F\oplus g^{V_F})$ such that
\begin{equation}\label{eq 4.2.1}
\omega_E-\omega_F\equiv\CS(\nabla^E\oplus\nabla^{V_E}, \alpha^*(\nabla^F\oplus\nabla^{V_F})).
\end{equation}

The analytic index of a $\Z_2$-graded generator $\E$ of $\wh{K}_{\FL}(X)$ is defined to be
\begin{equation}\label{eq 4.2.2}
\ind^a_{\wh{K}}(\E; L)=\bigg(L, g^L, \nabla^L, \int_{X/B}\todd(\nabla^{T^VX})\wedge\omega+\wh{\eta}^E(g^E, \nabla^E, T^HX, g^{T^VX}, g^\lambda, \nabla^\lambda, L)\bigg),
\end{equation}
where $(L, g^L, \nabla^L)$ is a $\Z_2$-graded triple so that $L\to B$ satisfies the MF property for $\DD^{S\otimes E}$. By \cite[Corollary 1]{H20}, $\ind^a_{\wh{K}}(\E; L)$ does not depend on the choice of $(L, g^L, \nabla^L)$ so that $L\to B$ satisfies the MF property for $\DD^{S\otimes E}$. Henceforth we write $\ind^a_{\wh{K}}(\E)$ for $\ind^a_{\wh{K}}(\E; L)$.

Let $\E$ and $\F$ be $\Z_2$-graded generators of $\wh{K}_{\FL}(X)$. By \cite[Corollary 1]{H20} and Proposition \ref{prop 4.1}, the analytic index of $\ind^a_{\wh{K}}(\E+\F)$ is given by
\begin{equation}\label{eq 4.2.3}
\begin{split}
\ind^a_{\wh{K}}(\E+\F)&=\bigg(L_{E\oplus F}, g^{L_{E\oplus F}}, \nabla^{L_{E\oplus F}}, \int_{X/B}\todd(\nabla^{T^VX})\wedge(\omega_E+\omega_F),\\ &\qquad\wh{\eta}^{E\oplus F}(g^E\oplus g^F, \nabla^E\oplus\nabla^F, T^HX, g^{T^VX}, g^\lambda, \nabla^\lambda, L_{E\oplus F})\bigg),
\end{split}
\end{equation}
where $(L_{E\oplus F}, g^{L_{E\oplus F}}, \nabla^{L_{E\oplus F}})$ is the $\Z_2$-graded triple given by (\ref{eq 4.1.3}). Thus the analytic index in differential $K$-theory is additive, i.e.
\begin{equation}\label{eq 4.2.4}
\ind^a_{\wh{K}}(\E+\F)=\ind^a_{\wh{K}}(\E)+\ind^a_{\wh{K}}(\F).
\end{equation}

We now prove Proposition \ref{prop 1.2}.
\begin{proof}[Proof of Proposition \ref{prop 1.2}]
We first show that $\ind^a_{\wh{K}}$ passes to a well defined map $\wh{K}_{\FL}(X)\to\wh{K}_{\FL}(B)$, i.e. if $\E$ and $\F$ are $\Z_2$-graded generators of $\wh{K}_{\FL}(X)$ satisfying $\E=\F$, then
\begin{equation}\label{eq 4.2.5}
\ind^a_{\wh{K}}(\E)=\ind^a_{\wh{K}}(\F).
\end{equation}
Once (\ref{eq 4.2.5}) is established, (\ref{eq 4.2.4}) immediately implies that $\ind^a_{\wh{K}}:\wh{K}_{\FL}(X)\to\wh{K}_{\FL}(B)$ is a group homomorphism.

Since $\E=\F$, there exist balanced $\Z_2$-graded triples $(V_E, g^{V_E}, \nabla^{V_E})$ and $(V_F, g^{V_F}, \nabla^{V_F})$ and a $\Z_2$-graded isometric isomorphism
$$\alpha:(E\oplus V_E, g^E\oplus g^{V_E})\to(F\oplus V_F, g^F\oplus g^{V_E})$$
such that (\ref{eq 4.2.1}) holds. By Proposition \ref{prop 4.2} there exist balanced $\Z_2$-graded triples $(L_{V_E}, g^{L_{V_E}}, \nabla^{L_{V_E}})$ and $(L_{V_F}, g^{L_{V_F}}, \nabla^{L_{V_F}})$ such that $L_{V_E}\to B$ and $L_{V_F}\to B$ satisfy the MF property for $\DD^{S\otimes V_E}$ and $\DD^{S\otimes V_F}$, respectively, and
\begin{equation}\label{eq 4.2.6}
\wh{\eta}^{V_E}(g^{V_E}, \nabla^{V_E}, L_{V_E})\equiv 0\equiv\wh{\eta}^{V_F}(g^{V_F}, \nabla^{V_F}, L_{V_F}).
\end{equation}

Let $(L_{E^+}, g^{L_{E^+}}, \nabla^{L_{E^+}})$ and $(L_{E^-}, g^{L_{E^-}}, \nabla^{L_{E^-}})$ be $\Z_2$-graded triples so that $L_{E^+}\to B$ and $L_{E^-}\to B$ satisfy the MF property for $\DD^{S\otimes E^+}$ and $\DD^{S\otimes E^-}$, respectively. By Lemma \ref{lemma 4.1}, $(L_{E^+}\oplus L_{E^-}^{\op}, g^{L_{E^+}}\oplus g^{L_{E^-}, \op}, \nabla^{L_{E^+}}\oplus\nabla^{L_{E^-}, \op})$ is a $\Z_2$-graded triple so that $L_{E^+}\oplus L_{E^-}^{\op}\to B$ satisfies the MF property for $\DD^{S\otimes E}$. By Proposition \ref{prop 4.1},
$$\big((L_{E^+}\oplus L_{E^-}^{\op})\oplus L_{V_E}, (g^{L_{E^+}}\oplus g^{L_{E^-}, \op})\oplus g^{L_{V_E}}, (\nabla^{L_{E^+}}\oplus\nabla^{L_{E^-}, \op})\oplus\nabla^{L_{V_E}}\big)$$
is a $\Z_2$-graded triple so that $(L_{E^+}\oplus L_{E^-}^{\op})\oplus L_{V_E}\to B$ satisfies the MF property for $\DD^{S\otimes(E\oplus V_E)}$.

Let $(L_{F^+}, g^{L_{F^+}}, \nabla^{L_{F^+}})$ and $(L_{F^-}, g^{L_{F^-}}, \nabla^{L_{F^-}})$ be $\Z_2$-graded triples so that $L_{F^+}\to B$ and $L_{F^-}\to B$ satisfy the MF property for $\DD^{S\otimes F^+}$ and $\DD^{S\otimes F^-}$, respectively. Similarly,
$$\big((L_{F^+}\oplus L_{F^-}^{\op})\oplus L_{V_F}, (g^{L_{F^+}}\oplus g^{L_{F^-}, \op})\oplus g^{L_{V_F}}, (\nabla^{L_{F^+}}\oplus\nabla^{L_{F^-}, \op})\oplus\nabla^{L_{V_F}}\big)$$
is a $\Z_2$-graded triple so that $(L_{F^+}\oplus L_{F^-}^{\op})\oplus L_{V_F}\to B$ satisfies the MF property for $\DD^{S\otimes(F\oplus V_F)}$.

By applying Theorem \ref{thm 3.1} to the $\Z_2$-graded isometric isomorphism $\alpha$, there exist balanced $\Z_2$-graded triples $(W_E, g^{W_E}, \nabla^{W_E})$ and $(W_F, g^{W_F}, \nabla^{W_F})$ and a $\Z_2$-graded isometric isomorphism
\begin{equation}\label{eq 4.2.7}
\begin{tikzcd}
\big((L_{E^+}\oplus L_{E^-}^{\op})\oplus L_{V_E}\oplus W_E, (g^{L_{E^+}}\oplus g^{L_{E^-}, \op})\oplus g^{L_{V_E}}\oplus g^{W_E})\big) \arrow{d}{h} \\ \big((L_{F^+}\oplus L_{F^-}^{\op})\oplus L_{V_F}\oplus W_F, (g^{L_{F^+}}\oplus g^{L_{F^-}, \op})\oplus g^{L_{V_F}}\oplus g^{V_F})\big)
\end{tikzcd}
\end{equation}
such that
\begin{equation}\label{eq 4.2.8}
\begin{split}
&\wh{\eta}^{F\oplus V_F}(g^F\oplus g^{V_F}, \nabla^F\oplus\nabla^{V_F}, (L_{F^+}\oplus L_{F^-}^{\op})\oplus L_{V_F})\\
&\quad-\wh{\eta}^{E\oplus V_E}(g^E\oplus g^{V_E}, \nabla^E\oplus\nabla^{V_E}, (L_{E^+}\oplus L_{E^-}^{\op})\oplus L_{V_E})\\
&\equiv\int_{X/B}\todd(\nabla^{T^VX})\wedge\CS(\nabla^E\oplus\nabla^{V_E}, \alpha^*(\nabla^F\oplus\nabla^{V_F}))\\
&\quad-\CS\big((\nabla^{L_{E^+}}\oplus\nabla^{L_{E^-}, \op})\oplus\nabla^{L_{V_E}}\oplus\nabla^{W_E}, h^*((\nabla^{L_{F^+}}\oplus\nabla^{L_{F^-}, \op})\oplus\nabla^{L_{V_F}}\oplus\nabla^{W_F})\big).
\end{split}
\end{equation}
By Proposition \ref{prop 4.1} and (\ref{eq 4.2.6}),
\begin{equation}\label{eq 4.2.9}
\wh{\eta}^{E\oplus V_E}(g^E\oplus g^{V_E}, \nabla^E\oplus\nabla^{V_E}, (L_{E^+}\oplus L_{E^-}^{\op})\oplus L_{V_E})\equiv\wh{\eta}^E(g^E, \nabla^E, L_{E^+}\oplus L_{E^-}^{\op}),
\end{equation}
and similarly,
\begin{equation}\label{eq 4.2.10}
\wh{\eta}^{F\oplus V_F}(g^F\oplus g^{V_F}, \nabla^F\oplus\nabla^{V_F}, (L_{F^+}\oplus L_{F^-}^{\op})\oplus L_{V_F})\equiv\wh{\eta}^F(g^F, \nabla^F, L_{F^+}\oplus L_{F^-}^{\op}).
\end{equation}
By (\ref{eq 4.2.1}), (\ref{eq 4.2.9}) and (\ref{eq 4.2.10}), (\ref{eq 4.2.8}) becomes
\begin{equation}\label{eq 4.2.11}
\begin{split}
&\wh{\eta}^F(g^F, \nabla^F, L_{F^+}\oplus L_{F^-}^{\op})-\wh{\eta}^E(g^E, \nabla^E, L_{E^+}\oplus L_{E^-}^{\op})\equiv\int_{X/B}\todd(\nabla^{T^VX})\wedge(\omega_E-\omega_F)\\
&\quad-\CS\big((\nabla^{L_{E^+}}\oplus\nabla^{L_{E^-}, \op})\oplus\nabla^{L_{V_E}}\oplus\nabla^{W_E}, h^*((\nabla^{L_{F^+}}\oplus\nabla^{L_{F^-}, \op})\oplus\nabla^{L_{V_F}}\oplus\nabla^{W_F})\big).
\end{split}
\end{equation}

Since the $\Z_2$-graded triples $(L_{V_E}\oplus W_E, g^{L_{V_E}}\oplus g^{W_E}, \nabla^{L_{V_E}}\oplus\nabla^{W_E})$ and $(L_{V_F}\oplus W_F, g^{L_{V_F}}\oplus g^{W_F}, \nabla^{L_{V_F}}\oplus\nabla^{W_F})$ are balanced, it follows from (\ref{eq 4.2.7}) and (\ref{eq 4.2.11}) that (\ref{eq 4.2.5}) holds.
\end{proof}

\subsection{The RRG theorem in $\R/\Z$ $K$-theory for the twisted spin$^c$ Dirac operators}\label{s 4.3}

In this subsection we give an alternative proof of the RRG theorem in $\R/\Z$ $K$-theory for twisted spin$^c$ Dirac operators without the kernel bundle assumption (Theorem \ref{thm 1.3}).

Define a map $\ch_{\wh{K}}:\wh{K}_{\FL}(X)\to\Omega^{\even}_\Q(X)$ by $\ch_{\wh{K}}(\E)=\ch(\nabla^E)+d\omega$. The $\R/\Z$ $K$-group $K^{-1}_{\LL}(X)$ \cite[Definition 7]{L94} (cf. \cite[(2.20)]{FL10}) can be defined as $K^{-1}_{\LL}(X)=\ker(\ch_{\wh{K}})$. Thus a $\Z_2$-graded generator $\E$ of $K^{-1}_{\LL}(X)$ is a $\Z_2$-graded generator of $\wh{K}_{\FL}(X)$ satisfying
\begin{equation}\label{eq 4.3.1}
\ch(\nabla^{E, +})-\ch(\nabla^{E, -})=-d\omega.
\end{equation}
Note that (\ref{eq 4.3.1}) implies that $\rk(E^+)=\rk(E^-)$.

The $\R/\Q$ Chern character $\ch_{\R/\Q}:K^{-1}_{\LL}(X)\to H^{\odd}(X; \R/\Q)$ is defined as follows. Let $\E$ be a $\Z_2$-graded generator of $K^{-1}_{\LL}(X)$. By (\ref{eq 4.3.1}), there exist a $k\in\N$ and an isometric isomorphism $\alpha:(kE^+, kg^{E, +})\to(kE^-, kg^{E, -})$ (see \cite[Remark 1]{H20} for a proof). Define $\ch_{\R/\Q}(\E)$ by
\begin{equation}\label{eq 4.3.2}
\ch_{\R/\Q}(\E)=\bigg[\frac{1}{k}\CS(\alpha^*(k\nabla^{E, -}), k\nabla^{E, +})+\omega\bigg]\mod\Q.
\end{equation}
It is easy to check that the odd form of the right-hand side of (\ref{eq 4.3.2}) is closed. Note that $\ch_{\R/\Q}(\E)$ is independent of the choices of $k$ and $\alpha$ \cite[p.289]{L94}.

The analytic index $\ind^a_{\R/\Z}$ in $\R/\Z$ $K$-theory of a $\Z_2$-graded generator $\E$ of $K^{-1}_{\LL}(X)$ is defined by the same formula (\ref{eq 4.2.2}). It is easy to check that $\ind^a_{\R/\Z}(\E)\in K^{-1}_{\LL}(B)$. As an immediate consequence of Proposition \ref{prop 1.2}, the analytic index in $\R/\Z$ $K$-theory $$\ind^a_{\R/\Z}:K^{-1}_{\LL}(X)\to K^{-1}_{\LL}(B)$$
is a well-defined group homomorphism.

We now prove Theorem \ref{thm 1.3}.
\begin{proof}[Proof of Theorem \ref{thm 1.3}]
As mentioned in \S1, we prove (\ref{eq 1.0.3}) at the differential form level. Let $\E$ be a $\Z_2$-graded generator of $K^{-1}_{\LL}(X)$. By (\ref{eq 4.3.1}), there exists a $k_1\in\N$ such that $k_1E^+\cong k_1E^-$.

Let $(L_{E^+}, g^{L_{E^+}}, \nabla^{L_{E^+}})$ and $(L_{E^-}, g^{L_{E^-}}, \nabla^{L_{E^-}})$ be $\Z_2$-graded triples so that $L_{E^+}\to B$ and $L_{E^-}\to B$ satisfy the MF property for $\DD^{S\otimes E^+}$ and $\DD^{S\otimes E^-}$, respectively. By Lemma \ref{lemma 4.1},
$$(L_{E^+}\oplus L_{E^-}^{\op}, g^{L_{E^+}}\oplus g^{L_{E^-}, \op}, \nabla^{L_{E^+}}\oplus\nabla^{L_{E^-}, \op})$$
is a $\Z_2$-graded triple so that $L_{E^+}\oplus L_{E^-}^{\op}\to B$ satisfies the MF property for $\DD^{S\otimes E}$. By \cite[Corollary 1]{H20}, $\ind^a_{\R/\Z}(\E)$ is given by
\begin{equation}\label{eq 4.3.3}
\begin{split}
\ind^a_{\R/\Z}(\E)&=\bigg(L_{E^+}\oplus L_{E^-}^{\op}, g^{L_{E^+}}\oplus g^{L_{E^-}, \op}, \nabla^{L_{E^+}}\oplus\nabla^{L_{E^-}, \op},\\ &\qquad\int_{X/B}\todd(\nabla^{T^VX})\wedge\omega+\wh{\eta}^E(g^E, \nabla^E, L_{E^+}\oplus L_{E^-}^{\op})\bigg).
\end{split}
\end{equation}
On the other hand, by (\ref{eq 3.1.17}), (\ref{eq 4.3.1}) and Theorem \ref{thm 4.1},
\begin{displaymath}
\begin{split}
\ch(\nabla^{L_{E^+}})-\ch(\nabla^{L_{E^-}})&=\int_{X/B}\todd(\nabla^{T^VX})\wedge\big(\ch(\nabla^{E, +})-\ch(\nabla^{E, -})\big)\\
&\qquad-d\wh{\eta}^{E^+}(g^{E, +}, \nabla^{E, +}, L_{E^+})+d\wh{\eta}^{E^-}(g^{E, -}, \nabla^{E, -}, L_{E^-})\\
&=\int_{X/B}\todd(\nabla^{T^VX})\wedge(-d\omega)-d\wh{\eta}^E(g^E, \nabla^E, L_{E^+}\oplus L_{E^-}^{\op})\\
&=-d\bigg(\int_{X/B}\todd(\nabla^{T^VX})\wedge\omega+\wh{\eta}^E(g^E, \nabla^E, L_{E^+}\oplus L_{E^-}^{\op})\bigg).
\end{split}
\end{displaymath}
Thus there exists a $k_2\in\N$ such that $k_2L_{E^+}\cong k_2L_{E^-}$.

Let $k$ be the least common multiple of $k_1$ and $k_2$. Let $\alpha:(kE^+, kg^{E, +})\to(kE^-, kg^{E, -})$ be an isometric isomorphism. By Theorem \ref{thm 3.1}, there exist balanced $\Z_2$-graded triples $(W_+, g^{W_+}, \nabla^{W_+})$ and $(W_-, g^{W_-}, \nabla^{W_-})$ and a $\Z_2$-graded isometric isomorphism $h:(L_{kE^+}\oplus W_+, g^{L_{kE^+}}\oplus g^{W_+})\to(L_{kE^-}\oplus W_-, g^{L_{kE^-}}\oplus g^{W_-})$ such that
\begin{equation}\label{eq 4.3.4}
\begin{split}
&\wh{\eta}^{kE^-}(kg^{E, -}, k\nabla^{E, -}, L_{kE^-})-\wh{\eta}^{kE^+}(kg^{E, +}, k\nabla^{E, +}, L_{kE^+})\\
&\equiv\int_{X/B}\todd(\nabla^{T^VX})\wedge\CS(k\nabla^{E, +}, \alpha^*(k\nabla^{E, -}))-\CS(\nabla^{L_{kE^+}}\oplus\nabla^{W_+}, h^*(\nabla^{L_{kE^-}}\oplus\nabla^{W_-})).
\end{split}
\end{equation}
Recall from (\ref{eq 4.1.3}) that
\begin{equation}\label{eq 4.3.5}
L_{kE^\pm}=kL_{E^\pm},\qquad g^{L_{kE^\pm}}=kg^{L_{E^\pm}},\qquad\nabla^{L_{kE^\pm}}=k\nabla^{L_{E^\pm}}.
\end{equation}
By applying Remark \ref{remark 3.1} to $kL_{E^+}\cong kL_{E^-}$, we take $W_+\to B$ and $W_-\to B$ to be the zero bundle. By (\ref{eq 4.3.5}), (\ref{eq 4.3.4}) becomes
\begin{equation}\label{eq 4.3.6}
\begin{split}
&\wh{\eta}^{kE^-}(kg^{E, -}, k\nabla^{E, -}, kL_{E^-})-\wh{\eta}^{kE^+}(kg^{E, +}, k\nabla^{E, +}, kL_{E^+})\\
&\equiv\int_{X/B}\todd(\nabla^{T^VX})\wedge\CS(k\nabla^{E, +}, \alpha^*(k\nabla^{E, -}))-\CS(k\nabla^{L_{E^+}}, h^*(k\nabla^{L_{E^-}})).
\end{split}
\end{equation}
By Proposition \ref{prop 4.1} and Theorem \ref{thm 4.1},
\begin{equation}\label{eq 4.3.7}
\begin{split}
&\wh{\eta}^{kE^-}(kg^{E, -}, k\nabla^{E, -}, kL_{E^-})-\wh{\eta}^{kE^+}(kg^{E, +}, k\nabla^{E, +}, kL_{E^+})\\
&\equiv k\big(\wh{\eta}^{E^-}(g^{E, -}, \nabla^{E, -}, L_{E^-})-\wh{\eta}^{E^+}(g^{E, +}, \nabla^{E, +}, L_{E^+})\big)\\
&\equiv-k\wh{\eta}^E(g^E, \nabla^E, L_{E^+}\oplus L_{E^-}^{\op}).
\end{split}
\end{equation}

On the other hand, denote by $h_+:(kL_{E^+}^+, kg^{L_{E^+}, +})\to(kL_{E^-}^+, kg^{L_{E^-}, +})$ and $h_-:(kL_{E^+}^-, kg^{L_{E^+}, -})\to(kL_{E^-}^-, kg^{L_{E^-}, -})$ the even and the odd part of $h$, respectively. Then
$$h_+\oplus h_-^{-1}:(kL_{E^+}^+\oplus kL_{E^-}^-, kg^{L_{E^+}, +}\oplus kg^{L_{E^-}, -})\to(kL_{E^-}^+\oplus kL_{E^+}^-, kg^{L_{E^-}, +}\oplus kg^{L_{E^+}, -})$$
is an isometric isomorphism. By (\ref{eq 2.2.7}), (\ref{eq 2.2.4}) and (\ref{eq 2.2.6}),
\begin{align}
&\CS\big(k\nabla^{L_{E^+}}, h^*(k\nabla^{L_{E^-}})\big)\nonumber\\
&=\CS\big(k\nabla^{L_{E^+}, +}\oplus k\nabla^{L_{E^+}, -}, h_+^*(k\nabla^{L_{E^-}, +})\oplus h_-^*(k\nabla^{L_{E^-}, -})\big)\nonumber\\
&\equiv\CS\big(k\nabla^{L_{E^+}, +}, h_+^*(k\nabla^{L_{E^-}, +})\big)-\CS\big(k\nabla^{L_{E^+}, -}, h_-^*(k\nabla^{L_{E^-}, -})\big)\nonumber\\
&\equiv\CS\big(k\nabla^{L_{E^+}, +}, h_+^*(k\nabla^{L_{E^-}, +})\big)-\CS\big((h_-^{-1})^*(k\nabla^{L_{E^+}, -}), k\nabla^{L_{E^-}, -}\big)\nonumber\\
&\equiv\CS\big(k\nabla^{L_{E^+}, +}, h_+^*(k\nabla^{L_{E^-}, +})\big)+\CS\big(k\nabla^{L_{E^-}, -}, (h_-^{-1})^*(k\nabla^{L_{E^+}, -})\big)\nonumber\\
&\equiv\CS\big(k\nabla^{L_{E^+}, +}\oplus k\nabla^{L_{E^-}, -}, h_+^*(k\nabla^{L_{E^-}, +})\oplus(h_-^{-1})^*(k\nabla^{L_{E^+}, -})\big)\nonumber\\
&\equiv\CS\big(k(\nabla^{L_{E^+}, +}\oplus\nabla^{L_{E^-}, -}), (h_+\oplus h_-^{-1})^*(k(\nabla^{L_{E^-}, +}\oplus\nabla^{L_{E^+}, -}))\big).\label{eq 4.3.8}
\end{align}
By (\ref{eq 4.3.7}) and (\ref{eq 4.3.8}), (\ref{eq 4.3.6}) becomes
\begin{displaymath}
\begin{split}
&-k\wh{\eta}^E(g^E, \nabla^E, L_{E^+}\oplus L_{E^-}^{\op})+\CS\big(k(\nabla^{L_{E^+}, +}\oplus\nabla^{L_{E^-}, -}), (h_+\oplus h_-^{-1})^*(k(\nabla^{L_{E^-}, +}\oplus\nabla^{L_{E^+}, -}))\big)\\
&\equiv\int_{X/B}\todd(\nabla^{T^VX})\wedge\CS(k\nabla^{E, +}, \alpha^*(k\nabla^{E, -})).
\end{split}
\end{displaymath}
By (\ref{eq 2.2.4}) it becomes
\begin{equation}\label{eq 4.3.9}
\begin{split}
&k\wh{\eta}^E(g^E, \nabla^E, L_{E^+}\oplus L_{E^-}^{\op})+\CS\big((h_+\oplus h_-^{-1})^*(k(\nabla^{L_{E^-}, +}\oplus\nabla^{L_{E^+}, -})), k(\nabla^{L_{E^+}, +}\oplus\nabla^{L_{E^-}, -})\big)\\
&\equiv\int_{X/B}\todd(\nabla^{T^VX})\wedge\CS(\alpha^*(k\nabla^{E, -}), k\nabla^{E, +}).
\end{split}
\end{equation}
By first dividing both sides of (\ref{eq 4.3.9}) by $k$, and adding \dis{\int_{X/B}\todd(\nabla^{T^VX})\wedge\omega} to both sides of (\ref{eq 4.3.9}), (\ref{eq 4.3.9}) becomes
\begin{equation}\label{eq 4.3.10}
\begin{split}
&\frac{1}{k}\CS\big((h_+\oplus h_-^{-1})^*(k(\nabla^{L_{E^-}, +}\oplus\nabla^{L_{E^+}, -})), k(\nabla^{L_{E^+}, +}\oplus\nabla^{L_{E^-}, -})\big)\\
&\qquad+\int_{X/B}\todd(\nabla^{T^VX})\wedge\omega+\wh{\eta}^E(g^E, \nabla^E, L_{E^+}\oplus L_{E^-}^{\op})\\
&\equiv\int_{X/B}\todd(\nabla^{T^VX})\wedge\bigg(\frac{1}{k}\CS(\alpha^*(k\nabla^{E, -}), k\nabla^{E, +})+\omega\bigg).
\end{split}
\end{equation}
Since the left-hand side of (\ref{eq 4.3.10}) is a differential form representative of $\ch_{\R/\Q}(\ind^a_{\R/\Z}(\E))$ and the right-hand side of (\ref{eq 4.3.10}) is that of \dis{\int_{X/B}\todd(T^VX)\cup\ch_{\R/\Q}(\E)}, (\ref{eq 1.0.3}) holds.
\end{proof}

Note that (\ref{eq 4.3.10}) is a refinement of (\ref{eq 1.0.3}) at the differential form level.

\bibliographystyle{amsplain}
\bibliography{MBib}
\end{document}